\theoremstyle{plain}
\newtheorem{thm}{Theorem}[section]
\newtheorem{prop}[thm]{Proposition}
\newtheorem{lemma}[thm]{Lemma}
\newtheorem{cor}[thm]{Corollary}
\newtheoremstyle{underline}
{}        
{}              
{}              
{}    
{\large}              
{:}             
{1mm}         
{{\underline{\thmname{#1}\thmnumber{ #2}}}}  
\theoremstyle{underline}
\newtheorem*{claim*}{Claim}
\theoremstyle{definition}
\newtheorem{defi}[thm]{Definition}
\theoremstyle{remark}
\newtheorem{remark}[thm]{Remark}
\newtheorem{ex}[thm]{Example}
\newtheorem{exs}[thm]{Examples}
\newtheorem*{ack}{Acknowledgements}
\newcommand{\RR}{\ensuremath{\mathbb R}}
\definecolor{forest}{rgb}{0,0.5,0}
\begin{document}
	
	\title{The Poisson saturation of coregular submanifolds}
	
	\author{Stephane Geudens}
	\address{\scriptsize{KU Leuven, Department of Mathematics, Celestijnenlaan 200B, 3001 Leuven, Belgium \newline Current address: Max-Planck-Institut für Mathematik, Vivatsgasse 7, 53111 Bonn, Germany}}
	
	\email{stephane\_geudens@hotmail.com}
	
	\begin{abstract}
		This paper is devoted to coregular submanifolds in Poisson geometry. We show that their local Poisson saturation is an embedded Poisson submanifold, and we give a normal form for this Poisson submanifold around the coregular submanifold. This result recovers the normal form around Poisson transversals, and it yields Poisson versions of some normal form/rigidity results around constant rank submanifolds in symplectic geometry. As an application, we prove a uniqueness result concerning coisotropic embeddings of Dirac manifolds in Poisson manifolds. We also show how our results generalize to the setting of coregular submanifolds in Dirac geometry.
	\end{abstract}

	\maketitle
	
	\setcounter{tocdepth}{1} 
	\tableofcontents
	
	\section*{Introduction}
	A well-known result in symplectic geometry is Weinstein's generalized Darboux theorem, which states that for any embedded submanifold $X$ of a symplectic manifold $(M,\omega)$, the restriction of $\omega$ to $TM|_{X}$ determines the symplectic form $\omega$ on a neighborhood of $X$ up to symplectomorphism \cite{lagrangians}.
	By contrast, given a Poisson manifold $(M,\pi)$ and any embedded submanifold $X\subset M$, one should not expect $\pi$ to be determined, up to neighborhood equivalence, by its restriction $\pi|_{X}$. For instance, the origin in $\RR^{2}$ is a fixed point for both the zero Poisson structure and the Poisson structure $\pi=(x^{2}+y^{2})\partial_{x}\wedge\partial_{y}$, which are clearly not diffeomorphic around $(0,0)$. 
	
	In order for the restriction $\pi|_{X}$ to determine $\pi$ around $X$, the ambient Poisson manifold needs to satisfy a minimality condition with respect to $X$. Since $\pi|_{X}$ only contains information in the leafwise direction along $X$, we are led to consider the \emph{saturation} of $X\subset(M,\pi)$, i.e. the union of the symplectic leaves that intersect $X$. Clearly, the saturation of $X$ fails to be smooth in general; the purpose of this note is to single out a class of embedded submanifolds $X\subset(M,\pi)$ whose saturation is smooth near $X$, in a sense that will be made precise later. Since the saturation $Sat(X)$ of $X\subset(M,\pi)$ is traced out by following Hamiltonian flows starting at points of $X$ in directions normal to $X\subset M$, it is natural to impose the following regularity condition on $X$.  
	
	\vspace{0.15cm}
	\textbf{Definition.}~We call an embedded submanifold $X$ of a Poisson manifold $(M,\pi)$ \textbf{coregular} if the map $\mathrm{pr}\circ\pi^{\sharp}:T^{*}M|_{X}\rightarrow TM|_{X}/TX$ has constant rank. 
	
	\vspace{0.15cm}
	It is equivalent to require that the $\pi$-orthogonal $TX^{\perp_{\pi}}:=\pi^{\sharp}(TX^{0})$ has constant rank. Extreme examples are transversals and Poisson submanifolds of $(M,\pi)$, and we show that any coregular submanifold $X\subset(M,\pi)$ is obtained by intersecting such submanifolds. Note that if $\pi$ is symplectic, then any submanifold of $(M,\pi)$ is coregular.
	
	\vspace{0.15cm}
	The main result of Section \ref{sec:first} is the fact that the saturation of a coregular submanifold $X\subset(M,\pi)$ is smooth around $X$, in the following sense.
	
	\vspace{0.15cm}
	\textbf{Theorem A.} \emph{If $X\subset(M,\pi)$ is a coregular submanifold, then there exists a neighborhood $V$ of $X$ such that the saturation of $X$ inside $(V,\pi|_{V})$ is an embedded Poisson submanifold.}
	
	\vspace{0.15cm}
	We will refer to this Poisson submanifold as the \emph{local Poisson saturation} of $X$. The proof of Theorem A relies on some contravariant geometry and some results concerning dual pairs in Poisson geometry. 
	
	\vspace{0.15cm}
	Sections \ref{sec:second} and \ref{sec:third} are devoted to the construction of a normal form for the local Poisson saturation of a coregular submanifold. In Section \ref{sec:second}, we introduce the local model; it is defined on the total space of the vector bundle $(TX^{\perp_{\pi}})^{*}$, and it depends on two choices:
	\begin{enumerate}
		\item A choice of complement $W$ to $TX^{\perp_{\pi}}$ inside $TM|_{X}$. Such a choice yields an inclusion $j:(TX^{\perp_{\pi}})^{*}\hookrightarrow T^{*}M|_{X}$.
		\item A choice of closed two-form $\eta$ on a neighborhood of $X$ in $(TX^{\perp_{\pi}})^{*}$, with prescribed restriction $\eta|_{X}=-\sigma-\tau$ along the zero section $X\subset(TX^{\perp_{\pi}})^{*}$. Here $\sigma\in\Gamma(\wedge^{2}TX^{\perp_{\pi}})$ and $\tau\in\Gamma(T^{*}X\otimes TX^{\perp_{\pi}})$ are bilinear forms defined by
		\begin{align*}
			&\sigma(\xi_{1},\xi_{2})=\pi\big(j(\xi_{1}),j(\xi_{2})\big),\nonumber\\
			&\tau\big((v_{1},\xi_{1}),(v_{2},\xi_{2})\big)=\langle v_{1},j(\xi_{2})\rangle - \langle v_{2},j(\xi_{1})\rangle,
		\end{align*}
		for $\xi_{1},\xi_{2}\in\big(T_{x}X^{\perp_{\pi}}\big)^{*}$ and $v_{1},v_{2}\in T_{x}X$.
	\end{enumerate}
	
	To such a complement $W$ and closed extension $\eta$, we associate a Poisson structure $\big(U,\pi(W,\eta)\big)$ on a neighborhood $U$ of $X\subset(TX^{\perp_{\pi}})^{*}$. It is described conveniently using the language of Dirac geometry, see e.g. \cite[\S 2]{dirac} for definitions and notation. The construction goes as follows: pull back the Dirac structure $L_{\pi}$ defined by the Poisson structure $\pi$ under $i:X\hookrightarrow (M,L_{\pi})$, then pull back once more by the bundle projection $\mathrm{pr}:(TX^{\perp_{\pi}})^{*}\rightarrow (X,i^{!}L_{\pi})$ and gauge transform by the closed extension $\eta$. The obtained Dirac structure $\big(\mathrm{pr}^{!}(i^{!}L_{\pi})\big)^{\eta}$ is Poisson on a neighborhood $U$ of $X\subset(TX^{\perp_{\pi}})^{*}$. This Poisson structure, denoted by $\big(U,\pi(W,\eta)\big)$, is the local model for the local Poisson saturation of $X\subset(M,\pi)$, as we prove in Section \ref{sec:third}.
	
	\vspace{0.15cm}
	\textbf{Theorem B.}
	\emph{Let $X\subset(M,\pi)$ be a coregular submanifold. A neighborhood of $X$ in its local Poisson saturation is Poisson diffeomorphic with the local model $\big(U,\pi(W,\eta)\big)$.}
	\vspace{0.15cm}
	
	The proof of this result goes along the same lines as the proof of the normal form around Poisson transversals \cite{transversals}, using dual pairs in Dirac instead of Poisson geometry. 
	
	Since the local model $\big(U,\pi(W,\eta)\big)$ is constructed out of the restriction $\pi|_{X}$, Theorem B shows that the local Poisson saturation of a coregular submanifold $X$ is determined by the restriction $\pi|_{X}$, up to Poisson diffeomorphism around $X$. We thus obtain a Poisson version of Weinstein's generalized Darboux theorem in symplectic geometry. In general, one needs the full information of $\pi|_{X}$ in order to determine the local Poisson saturation of $X$. However, there are distinguished coregular submanifolds $X$ for which only part of this information is required, as we show in Section \ref{sec:fourth}.
	
	\vspace{0.15cm}
	In Section \ref{sec:fourth}, we specialize our normal form to some particular classes of coregular submanifolds. These allow for a good choice of complement $W$ and/or closed extension $\eta$, and as such our normal form becomes more explicit. Most notably, we obtain statements concerning the following types of submanifolds, i) and ii) being particular instances of iii):
	\begin{enumerate}[i)]
		\item \underline{Poisson transversals}: We recover the normal form theorem around Poisson transversals, which was established in \cite{transversals}, \cite{eulerlike}.
		\item \underline{Coregular coisotropic submanifolds:} We obtain a Poisson version of Gotay's normal form theorem from symplectic geometry \cite{gotay}, which shows that the local Poisson saturation of a coregular coisotropic submanifold $i:X\hookrightarrow(M,\pi)$ is determined, up to Poisson diffeomorphism around $X$, by the pullback Dirac structure $i^{!}L_{\pi}$.
		
		
		\item \underline{Coregular pre-Poisson submanifolds:} We obtain a Poisson analog of Marle's constant rank theorem from symplectic geometry \cite{marle}. Loosely speaking, the result shows that the local Poisson saturation of a coregular pre-Poisson submanifold $i:X\hookrightarrow(M,\pi)$ is determined, up to Poisson diffeomorphism around $X$, by the pullback Dirac structure $i^{!}L_{\pi}$ and the restriction of $\pi$ to $(TX^{\perp_{\pi}})^{*}/(TX^{\perp_{\pi}}\cap TX)^{*}$.
	\end{enumerate}
	
	\vspace{0.15cm}
	In Section \ref{sec:fifth}, we present an application of our normal form specialized to the case of coregular coisotropic submanifolds. We address the problem of embedding a Dirac manifold $(X,L)$ coisotropically into a Poisson manifold $(M,\pi)$, which was considered before in \cite{marco} and \cite{wade}. Existence of coisotropic embeddings is settled in \cite{marco}, where one shows that such an embedding exists exactly when $L\cap TX$ has constant rank. An explicit construction of the Poisson manifold $(M,\pi)$ is given in that case; another construction appears in \cite{wade}.  
	
	\noindent
	The uniqueness of such embeddings was conjectured in \cite{marco}, but only proved under additional regularity assumptions on $(X,L)$. Using our normal form result, we can show that any coisotropic embedding of $(X,L)$ factors through the model $(M,\pi)$
	constructed in \cite{marco}, which in turn proves the conjecture concerning the uniqueness of coisotropic embeddings.
	
	
	\vspace{0.15cm}
	In Section \ref{sec:sixth}, we discuss how our results can be generalized to the setting of coregular submanifolds in Dirac geometry. The Appendix contains a result in differential topology for which we could not find a proof in the literature.
	
	\vspace{0.15cm}
	\noindent
		\emph{Terminology and notation.} We freely use notions from Dirac geometry throughout the text, adopting terminology and notation from \cite{dirac}. For more background on Dirac structures, see e.g. \cite{bursztyn}. We also mention here that the recent work \cite{coregular} addresses coregular submanifolds $X\subset(M,\pi)$ for which additionally  
	$TX^{\perp_{\pi}}\cap TX$ is trivial, ensuring that $X$ has an induced Poisson structure. We warn the reader that these submanifolds are also referred to as ``coregular'' in \cite{coregular}. In the book \cite{bookpoiss}, these are called ``coregular Poisson-Dirac''.
	
	\begin{ack}
		This work was supported by EOS project G0H4518N, funded by Fonds Wetenschappelijk Onderzoek (FWO) and Fonds de la Recherche Scientifique (FNRS). I would like to thank my supervisor Ioan M\u{a}rcu\c{t} for very useful suggestions, and the Max Planck Institute for Mathematics in Bonn for its hospitality and financial support.
	\end{ack}
	
	\section{The saturation of a coregular submanifold}\label{sec:first}
	
	
	In this section, we discuss the saturation of embedded submanifolds $X$ in a Poisson manifold $(M,\pi)$. Our aim is to give sufficient conditions on $X$ that ensure smoothness of its saturation in a neighborhood of $X$. We introduce the class of \emph{coregular} submanifolds $X\subset (M,\pi)$, and we show that such a submanifold $X$ has a neighborhood $U$ in $M$ such that the saturation of $X$ in $(U,\pi|_{U})$ is an embedded Poisson submanifold.
	
	\begin{defi}
		The \textbf{saturation} of a submanifold $X$ of a Poisson manifold $(M,\pi)$ is the union of all the leaves of $(M,\pi)$ that intersect $X$. We denote the saturation of $X$ by $Sat(X)$.
	\end{defi}
	
	Recall that a Poisson submanifold $P\subset(M,\pi)$ is said to be complete if the inclusion  $(P,\pi_{P})\hookrightarrow(M,\pi)$ is a complete Poisson map \cite[\S 6.2]{models}.
	It is clear that, given a submanifold $X\subset(M,\pi)$, the saturation $Sat(X)$ is the smallest complete Poisson submanifold of $(M,\pi)$ containing $X$, provided it is smooth. Indeed, a complete Poisson submanifold $P\subset (M,\pi)$ is saturated \cite[Prop. 6.1]{models}, so if $X\subset P$ then $Sat(X)\subset Sat(P)=P$.

	The saturation of a submanifold can be very wild; in general it does not have a submanifold structure. For instance, consider the $x$-axis in the log-symplectic manifold $(\RR^{2},x\partial_{x}\wedge\partial_{y})$; its saturation is $\{x<0\}\cup\{(0,0)\}\cup\{x>0\}$. Clearly, this saturation doesn't even contain a Poisson submanifold around the $x$-axis.

	
	We now single out classes of submanifolds $X\subset(M,\pi)$ that do satisfy this property, i.e. whose saturation contains a Poisson submanifold which contains $X$. Examples of such submanifolds are transversals (whose saturation is open, and therefore a Poisson submanifold) and Poisson submanifolds. These are extreme cases of what we call \emph{coregular} submanifolds.

	\begin{defi}
		Given a Poisson manifold $(M,\pi)$, we call an embedded submanifold $X\subset M$ \textbf{coregular} if the map $\mathrm{pr}\circ\pi^{\sharp}:T^{*}M|_{X}\rightarrow TM|_{X}/TX$ has constant rank. 
	\end{defi}
	
	Note indeed that transversals and Poisson submanifolds are exactly those submanifolds $X\subset (M,\pi)$ for which the map $\mathrm{pr}\circ\pi^{\sharp}$ is respectively of full rank or identically zero. 
	
	We will now list some more observations about coregular submanifolds. For any submanifold $X\subset(M,\pi)$, we denote its $\pi$-orthogonal by $TX^{\perp_{\pi}}:=\pi^{\sharp}(TX^{\circ})$. If $x\in X$ and $L$ is the symplectic leaf through $x$, then $T_{x}X^{\perp_{\pi}}$ is the symplectic orthogonal of $T_{x}X\cap T_{x}L$ in the symplectic vector space $\left(T_{x}L,(\pi|_{L})^{-1}_{x}\right)$. Various types of submanifolds in Poisson geometry are defined in terms of their $\pi$-orthogonal; see \cite{bookpoiss} and \cite{Zambonsubmanifolds} for a systematic overview.
	\begin{enumerate}[a)]
		\item  Given a submanifold $X\subset(M,\pi)$, we get an exact sequence at points $x\in X$:
		\begin{equation}\label{sequence}
			\begin{tikzcd}[row sep=scriptsize, column sep=scriptsize]
				&0\arrow{r} &\big(T_{x}X^{\perp_{\pi}}\big)^{\circ}\arrow[r,hook] &T_{x}^{*}M\arrow[r,"\mathrm{pr}\circ\pi^{\sharp}"] &T_{x}M/T_{x}X.
			\end{tikzcd}
		\end{equation}
		Hence, $X\subset(M,\pi)$ is coregular exactly when $TX^{\perp_{\pi}}$ has constant rank. This observation explains the name ``coregular'' because, as mentioned above, $TX^{\perp_{\pi}}$ consists of the symplectic orthogonals to the (singular) distribution $TX\cap\pi^{\sharp}(T^{*}M|_{X})$.
		\item We give an alternative characterization of coregular submanifolds $X\subset (M,\pi)$ in Dirac geometric terms. Denote by $L_{\pi}$ the Dirac structure $L_{\pi}=\{\pi^{\sharp}(\alpha)+\alpha:\alpha\in T^{*}M\}$ defined by $\pi$, and let $i:X\hookrightarrow (M,\pi)$ be any submanifold. Then $X$ is coregular exactly when $L_{\pi}\cap\ker((\mathrm{d}i)^{*})$ has constant rank. Indeed, $L_{\pi}\cap\ker((\mathrm{d}i)^{*})=\ker(\pi^{\sharp})\cap TX^{\circ}$, so for any $x\in X$, we have
		\[
		\dim\left(L_{\pi}\cap\ker((\mathrm{d}i)^{*})\right)_{x}=\dim\big(T_{x}X^{\circ}\big)-\dim\big(T_{x}X^{\perp_{\pi}}\big).
		\]
		Hence, coregular submanifolds are exactly those  $X\subset (M,\pi)$ for which the Dirac structure $L_{\pi}$ automatically pulls back to a smooth Dirac structure on $X$ \cite[Prop. 1.10]{bursztyn}.
	\end{enumerate}

	
	We mention here that $X$ being coregular is not a necessary condition for $Sat(X)$ to contain a Poisson submanifold around $X$, as illustrated by the following examples.
	
	\begin{exs}
		\begin{enumerate}[i)]
			\item Take the Lie-Poisson structure $\big(\mathfrak{so}(3)^{*}, z\partial_{x}\wedge\partial_{y}+x\partial_{y}\wedge\partial_{z}+y\partial_{z}\wedge\partial_{x}\big)$ and let $X$ be the plane defined by $z=0$. The symplectic foliation of $\mathfrak{so}(3)^{*}$ consists of concentric spheres of radius $r\geq 0$ centered at the origin, so that $Sat(X)=\mathfrak{so}(3)^{*}$. However, $TX^{\perp_{\pi}}=\text{Span}\{y\partial_{x}-x\partial_{y}\}$ vanishes at the origin, so $X$ is not coregular.
			\item Consider the regular Poisson manifold $(\RR^{3},\partial_{x}\wedge\partial_{y})$ and let $X$ be defined by the equation $z=x^{3}$. Then the saturation $Sat(X)$ is all of $\mathbb{R}^{3}$, but $X$ is not coregular. Indeed, 
			we have $TX^{\perp_{\pi}}=\text{Span}\{-3x^{2}\partial_{y}\}$, which drops rank at points of the form $(0,y,0)\in X$.
		\end{enumerate}
	\end{exs}
	
	To construct a Poisson submanifold around the coregular submanifold $X\subset (M,\pi)$, we use some notions from contravariant geometry and some theory of dual pairs \cite{realizations},\cite{transversals}. 
	
	\begin{defi}\label{spray}
		A \textbf{Poisson spray} on a Poisson manifold $(M,\pi)$ is a vector field $\chi$ on the cotangent bundle $T^{*}M$ satisfying:
		\begin{enumerate}[i)]
			\item $\mathrm{d}\mathrm{pr}(\chi(\xi))=\pi^{\sharp}(\xi)$ for all $\xi\in T^{*}M$,
			\item $m_{t}^{*}\chi=t\chi$ for all $t>0$,
		\end{enumerate}
		where $\mathrm{pr}:T^{*}M\rightarrow M$ is the projection and $m_{t}:T^{*}M\rightarrow T^{*}M$ is multiplication by $t$.
	\end{defi}
	
	Poisson sprays $\chi\in\mathfrak{X}(T^{*}M)$ exist on any Poisson manifold. Since $\chi$ vanishes along the zero section $M\subset T^{*}M$, there exists a neighborhood $\Sigma\subset T^{*}M$ of $M$ on which the flow $\phi_{\chi}^{t}$ is defined for all times $t\in[0,1]$. One can then define the \textbf{spray exponential} $\exp_{\chi}$ of $\chi$ by
	\[
	\exp_{\chi}:\Sigma\subset T^{*}M\rightarrow M:\xi\mapsto \mathrm{pr}(\phi_{\chi}^{1}(\xi)).
	\]
	
	This neighborhood $\Sigma\subset T^{*}M$ also supports a closed two-form $\Omega_{\chi}$, which is defined by averaging the canonical symplectic form $\omega_{can}$ with respect to the flow $\phi_{\chi}^{t}$ of the Poisson spray $\chi\in\mathfrak{X}(T^{*}M)$:
	\[
	\Omega_{\chi}:=\int_{0}^{1}\big(\phi_{\chi}^{t}\big)^{*}\omega_{can}\mathrm{d}t.
	\]
	It was proved in \cite{realizations} that $\Omega_{\chi}$ is non-degenerate along the zero zection $M\subset T^{*}M$, so shrinking $\Sigma\subset T^{*}M$ if necessary, we can assume that $\Omega_{\chi}$ is symplectic on $\Sigma$. By \cite[Lemma 25] {transversals}, the symplectic manifold $(\Sigma,\Omega_{\chi})$ fits in a \textbf{full dual pair}
	\begin{equation}\label{pair}
		\begin{tikzcd}
			(M,\pi)&(\Sigma,\Omega_{\chi})\arrow{r}{\exp_{\chi}}\arrow[l,"\mathrm{pr}",swap]&(M,-\pi).
		\end{tikzcd}
	\end{equation}
	That is, denoting by $\pi_{\chi}:=\Omega_{\chi}^{-1}$ the Poisson structure corresponding with $\Omega_{\chi}$, the maps $\mathrm{pr}:\big(\Sigma,\pi_{\chi}\big)\rightarrow(M,\pi)$ and $\exp_{\chi}:\big(\Sigma,\pi_{\chi}\big)\rightarrow (M,-\pi)$ are surjective Poisson submersions with symplectically orthogonal fibers: $(\ker \mathrm{d}\mathrm{pr})^{\perp_{\Omega_{\chi}}}=\ker{\mathrm{d}\exp_{\chi}}$.
	
	Both legs in the diagram \eqref{pair} are symplectic realizations. We will need the following lemma, which concerns the interplay between symplectic realizations and coregular submanifolds of a Poisson manifold.
	
	\begin{lemma}\label{realization}
		Let $X\subset (M,\pi)$ be a coregular submanifold and let $\mu:(\Sigma,\Omega)\rightarrow (M,\pi)$ be a symplectic realization. Then $(\ker \mathrm{d}\mu)^{\perp_{\Omega}}\cap T(\mu^{-1}(X))$ has constant rank, equal to the corank of $TX^{\perp_{\pi}}\subset TM|_{X}$.
	\end{lemma}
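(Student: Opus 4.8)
The plan is to reduce everything to a dimension count at a single point: I will compute the pointwise dimension of $(\ker d\mu)^{\perp_{\Omega}}\cap T(\mu^{-1}(X))$ over an arbitrary $x\in X$, check that it equals $\operatorname{corank}\big(TX^{\perp_{\pi}}\subset TM|_{X}\big)_{x}$, and then invoke regularity of $X$ to conclude that this number is constant along $X$. Since the intersection of two smooth subbundles that has constant rank is automatically a smooth subbundle, this gives both the smoothness and the value of the rank.

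First I would set up the linear algebra. Fix $\sigma\in\mu^{-1}(X)$, put $x=\mu(\sigma)$, and let $L$ be the symplectic leaf of $(M,\pi)$ through $x$. Since a symplectic realization is in particular a submersion, $\mu^{-1}(X)$ is an embedded submanifold with $T_{\sigma}(\mu^{-1}(X))=(d\mu_{\sigma})^{-1}(T_{x}X)$, and the annihilator of $\ker d\mu_{\sigma}$ is $\operatorname{im}\big((d\mu_{\sigma})^{*}\colon T_{x}^{*}M\to T_{\sigma}^{*}\Sigma\big)$. Writing $\Omega^{\sharp}$ for the inverse of $v\mapsto\iota_{v}\Omega$, so that the $\Omega$-orthogonal of a subspace $V\subset T_{\sigma}\Sigma$ is $\Omega^{\sharp}(V^{0})$, one gets
\[
(\ker d\mu_{\sigma})^{\perp_{\Omega}}=\Omega^{\sharp}\big((\ker d\mu_{\sigma})^{0}\big)=\big\{\Omega^{\sharp}\big((d\mu_{\sigma})^{*}\alpha\big)\ :\ \alpha\in T_{x}^{*}M\big\},
\]
a subspace of $T_{\sigma}\Sigma$ of dimension $\dim M$.

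The key step, and the one I expect to carry the weight of the argument, is to feed in that $\mu$ is a Poisson map. This is precisely the identity $d\mu_{\sigma}\circ\Omega^{\sharp}\circ(d\mu_{\sigma})^{*}=\pi_{x}^{\sharp}$ (up to the usual sign between $\Omega^{\sharp}$ and the sharp map of $\Omega^{-1}$, which is irrelevant for images). Applying $d\mu_{\sigma}$ to the description above therefore yields
\[
d\mu_{\sigma}\big((\ker d\mu_{\sigma})^{\perp_{\Omega}}\big)=\operatorname{im}(\pi_{x}^{\sharp})=T_{x}L,
\]
so $d\mu_{\sigma}$ restricts to a surjection from $(\ker d\mu_{\sigma})^{\perp_{\Omega}}$ onto $T_{x}L$ whose kernel has dimension $\dim M-\dim L$. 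Intersecting with $(d\mu_{\sigma})^{-1}(T_{x}X)$ gives
\[
\dim\Big((\ker d\mu_{\sigma})^{\perp_{\Omega}}\cap T_{\sigma}(\mu^{-1}(X))\Big)=\dim(T_{x}X\cap T_{x}L)+\big(\dim M-\dim L\big).
\]

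It remains to match this with the corank of $TX^{\perp_{\pi}}$. By the description recalled just before the lemma, $T_{x}X^{\perp_{\pi}}$ is the symplectic orthogonal of $T_{x}X\cap T_{x}L$ inside the symplectic vector space $\big(T_{x}L,(\pi|_{L})^{-1}_{x}\big)$, so $\dim T_{x}X^{\perp_{\pi}}=\dim L-\dim(T_{x}X\cap T_{x}L)$ and hence
\[
\operatorname{corank}\big(TX^{\perp_{\pi}}\subset TM|_{X}\big)_{x}=\dim M-\dim T_{x}X^{\perp_{\pi}}=\dim M-\dim L+\dim(T_{x}X\cap T_{x}L),
\]
which is exactly the dimension computed above. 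Since $X$ is regular, $TX^{\perp_{\pi}}$ has constant rank, so this common value is independent of $x\in X$, and therefore $(\ker d\mu)^{\perp_{\Omega}}\cap T(\mu^{-1}(X))$ is a smooth subbundle of constant rank equal to the corank of $TX^{\perp_{\pi}}\subset TM|_{X}$. The only genuinely nontrivial point is the identification $d\mu_{\sigma}\big((\ker d\mu_{\sigma})^{\perp_{\Omega}}\big)=T_{x}L$: this is where the Poisson-map hypothesis enters, and it is what forces the rank drop of $d\mu_{\sigma}$ on $(\ker d\mu_{\sigma})^{\perp_{\Omega}}$ to be precisely $\operatorname{corank}(\pi_{x})$; everything else is linear algebra together with the known description of $TX^{\perp_{\pi}}$ as a symplectic orthogonal inside the leaf.
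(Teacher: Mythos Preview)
Your argument is correct. Both you and the paper start from the same description $(\ker d\mu_{\sigma})^{\perp_{\Omega}}=\pi_{\Omega}^{\sharp}\big((d\mu_{\sigma})^{*}T_{x}^{*}M\big)$ and both use the Poisson-map identity $d\mu_{\sigma}\circ\pi_{\Omega}^{\sharp}\circ(d\mu_{\sigma})^{*}=\pi_{x}^{\sharp}$, but you then diverge slightly. The paper applies this identity directly to solve for which $\alpha\in T_{x}^{*}M$ land in $T_{x}X$, obtaining the characterization $\pi^{\sharp}(\alpha)\in T_{x}X\Leftrightarrow\alpha\in(T_{x}X^{\perp_{\pi}})^{0}$, and hence the explicit formula
\[
(\ker d\mu_{\sigma})^{\perp_{\Omega}}\cap T_{\sigma}(\mu^{-1}(X))=\pi_{\Omega}^{\sharp}\big((d\mu_{\sigma})^{*}(T_{x}X^{\perp_{\pi}})^{0}\big),
\]
from which the rank is read off immediately. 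You instead compute the image $d\mu_{\sigma}\big((\ker d\mu_{\sigma})^{\perp_{\Omega}}\big)=T_{x}L$, take the preimage of $T_{x}X\cap T_{x}L$, and then invoke the leafwise description of $T_{x}X^{\perp_{\pi}}$ as the symplectic orthogonal of $T_{x}X\cap T_{x}L$ inside $T_{x}L$ to match the dimensions. Your route is a touch more indirect (it passes through the leaf and back), but it is equally valid for the lemma as stated. The paper's route has the advantage of producing an explicit description of the intersection, not just its dimension; that formula is reused verbatim later in the proof of Lemma~\ref{diracpair}.
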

	\begin{proof}
		Denote by $\pi_{\Omega}:=\Omega^{-1}$ the Poisson structure corresponding with $\Omega$. First note that, for $\xi\in\mu^{-1}(X)$, we have
		\[
		(\ker \mathrm{d}\mu)_{\xi}^{\perp_{\Omega}}
		=\pi_{\Omega}^{\sharp}\big((\mathrm{d}\mu)_{\xi}^{*}T_{\mu(\xi)}^{*}M\big).
		\]
		Since for any $\beta\in T_{\mu(\xi)}^{*}M$, we have that $(\mathrm{d}\mu)_{\xi}\pi_{\Omega}^{\sharp}\big((\mathrm{d}\mu)_{\xi}^{*}\beta\big)=\pi^{\sharp}(\beta)$ belongs to $T_{\mu(\xi)}X$ exactly when $\beta\in (T_{\mu(\xi)}X^{\perp_{\pi}})^{\circ}$, we obtain
		\begin{align}\label{T}
			(\ker \mathrm{d}\mu)_{\xi}^{\perp_{\Omega}}\cap T_{\xi}(\mu^{-1}(X))&=(\ker \mathrm{d}\mu)_{\xi}^{\perp_{\Omega}}\cap(\mathrm{d}\mu)_{\xi}^{-1}(T_{\mu(\xi)}X)\nonumber\\
			&=\pi_{\Omega}^{\sharp}\left((\mathrm{d}\mu)_{\xi}^{*}\big(T_{\mu(\xi)}X^{\perp_{\pi}}\big)^{\circ}\right).
		\end{align}
		Hence, the rank of $(\ker \mathrm{d}\mu)^{\perp_{\Omega}}\cap T(\mu^{-1}(X))$ is constant, equal to $\dim M - rk(TX^{\perp_{\pi}})$.
	\end{proof}
	
	We now prove that for a coregular submanifold $X\subset (M,\pi)$, there exists an embedded Poisson submanifold of $(M,\pi)$ containing $X$ that lies in the saturation $Sat(X)$. This Poisson submanifold is in fact the saturation of $X$ in a neighborhood $(U,\pi|_{U})$ of $X$.
	
	\begin{thm}\label{satsmooth}
		Let $X\subset (M,\pi)$ be a coregular submanifold. 
		\begin{enumerate}
			\item There exists an embedded Poisson submanifold $(P,\pi_{P})\subset(M,\pi)$ containing $X$ that lies inside the saturation $Sat(X)$.
			\item Shrinking $P$ if necessary, there exists a neighborhood $U$ of $X$ in $M$ such that $(P,\pi_{P})$ is the saturation of $X$ in $(U,\pi|_{U})$.
		\end{enumerate}
	\end{thm}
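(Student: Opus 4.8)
The plan is to realize $P$ as the image of $N:=pr^{-1}(X)$ under the contravariant exponential map of a Poisson spray, using the full dual pair \eqref{pair}. Fix a Poisson spray $\chi$ on $(M,\pi)$, with $\Sigma\subset T^{*}M$ a neighbourhood of the zero section carrying the full dual pair $(M,\pi)\xleftarrow{pr}(\Sigma,\Omega_{\chi})\xrightarrow{\exp_{\chi}}(M,-\pi)$, and set $N:=pr^{-1}(X)\subset\Sigma$. It is an embedded submanifold of $\Sigma$ (an open piece of the restricted cotangent bundle $T^{*}M|_{X}$) containing the zero section over $X$, on which $\exp_{\chi}$ restricts to the identity because $\chi$ vanishes along $M\subset T^{*}M$.

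The first point is that $\exp_{\chi}|_{N}\colon N\to M$ has constant rank: by fullness of the dual pair, $\ker\big(d(\exp_{\chi}|_{N})\big)_{\xi}=(\ker d\exp_{\chi})_{\xi}\cap T_{\xi}N=(\ker dpr)_{\xi}^{\perp_{\Omega_{\chi}}}\cap T_{\xi}\big(pr^{-1}(X)\big)$, which by Lemma~\ref{realization} applied to the realization $pr$ has constant rank $\dim M-rk(TX^{\perp_{\pi}})$; since $\dim N$ is constant, so is the rank of $\exp_{\chi}|_{N}$. Now $\exp_{\chi}$ restricts to the embedding $\mathrm{id}_{X}$ on the zero section $X\subset N$, so the differential-topological result of the Appendix applies and produces an open neighbourhood $\tilde N$ of $X$ in $N$ on which $\exp_{\chi}$ is a submersion onto an embedded submanifold of $M$; after shrinking we may in addition take $\tilde N$ star-shaped with respect to the zero section (its restriction to the smaller $\tilde N$ remains a submersion onto an embedded submanifold, the image being open in the previous one). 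Set $P:=\exp_{\chi}(\tilde N)$, an embedded submanifold of $M$ containing $X$.

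Next I would verify that $P$ is a Poisson submanifold contained in $Sat(X)$. For $\xi\in\tilde N$ with $p=\exp_{\chi}(\xi)$: since $\exp_{\chi}\colon(\Sigma,\pi_{\chi})\to(M,-\pi)$ is Poisson and, by fullness, $\pi_{\chi}^{\sharp}\big((\ker d\exp_{\chi})_{\xi}^{0}\big)=(\ker dpr)_{\xi}$, one obtains $\operatorname{Im}\pi^{\sharp}_{p}=d(\exp_{\chi})_{\xi}\big((\ker dpr)_{\xi}\big)\subseteq d(\exp_{\chi})_{\xi}(T_{\xi}N)=T_{p}P$, so $\pi$ is tangent to $P$; let $\pi_{P}$ be the induced Poisson structure. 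For $P\subseteq Sat(X)$: by homogeneity of $\chi$ one has $\exp_{\chi}(s\xi)=pr(\phi_{\chi}^{s}(\xi))$ for $s\in[0,1]$, and by the spray condition the path $s\mapsto pr(\phi_{\chi}^{s}(\xi))$ has velocity $\pi^{\sharp}(\phi_{\chi}^{s}(\xi))\in\operatorname{Im}\pi^{\sharp}$, hence stays in a single symplectic leaf; it runs from $pr(\xi)\in X$ to $p$, so $p\in Sat(X)$. Moreover, since $\tilde N$ is star-shaped, this leafwise path stays inside $\exp_{\chi}(\tilde N)=P$. This establishes part (1), with $(P,\pi_{P})$.

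For part (2): being embedded, $P$ is locally closed, so $U:=M\setminus(\overline{P}\setminus P)$ is an open neighbourhood of $X$ in which $P$ is closed; then $P$ is a closed Poisson submanifold of $(U,\pi|_{U})$, hence saturated there, so $Sat_{U}(X)\subseteq P$, while the leafwise paths above (which lie in $P\subseteq U$, inside a single leaf of $(U,\pi|_{U})$) give $P\subseteq Sat_{U}(X)$; thus $P=Sat_{U}(X)$. The step I expect to be the main obstacle is the one invoking the Appendix: passing from the constant-rank map $\exp_{\chi}|_{N}$ to a submersion onto an honestly embedded submanifold on a whole neighbourhood of the possibly non-compact $X$ — that is, ruling out self-intersections and non-closedness of the image uniformly along $X$ — which a constant-rank map does not guarantee on its own; everything else is either the dual-pair bookkeeping above or routine point-set shrinking.
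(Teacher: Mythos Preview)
Your overall strategy matches the paper's: use the dual pair \eqref{pair}, exploit Lemma~\ref{realization} to get constant rank of $\exp_{\chi}|_{N}$ on $N=\Sigma|_{X}$, and read off the Poisson-submanifold property from the orthogonality of the fibers. The argument for $P\subset Sat(X)$ via cotangent paths and your treatment of part (2) are fine; in fact your choice $U=M\setminus(\overline{P}\setminus P)$, combined with the star-shaped $\tilde N$ ensuring the connecting paths stay in $P$, is a legitimate shortcut compared with the paper's explicit tubular-neighbourhood construction in Step~4.

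The gap is exactly the one you flag. The Appendix result (Proposition~\ref{embedding}) does \emph{not} apply to $\exp_{\chi}|_{N}$: its hypotheses require $(d\varphi)_{p}$ to be \emph{injective} for $p$ in the zero section, whereas for $x\in X$ the differential
\[
(d\exp_{\chi})_{x}\colon T_{x}X\oplus T_{x}^{*}M\to T_{x}M,\qquad (v,\xi)\mapsto v+\pi^{\sharp}_{x}(\xi),
\]
has kernel of dimension $\dim M-\mathrm{rk}(T_{x}X^{\perp_{\pi}})$, which is positive unless $X$ is a transversal. Constant rank alone only gives a local fibration onto locally embedded pieces; it does not prevent the image from self-intersecting near a non-compact $X$, so no ``constant-rank version'' of the Appendix is available without further work.

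The paper resolves this by first choosing a complement $W$ to $TX^{\perp_{\pi}}$ in $TM|_{X}$, giving an inclusion $j\colon(TX^{\perp_{\pi}})^{*}\hookrightarrow T^{*}M|_{X}$, and restricting $\exp_{\chi}$ to the subbundle $\widetilde{\Sigma_{X}}:=(TX^{\perp_{\pi}})^{*}\cap\Sigma|_{X}$. On this smaller bundle the differential along $X$ \emph{is} injective, so Proposition~\ref{embedding} applies and yields an embedded $P=\exp_{\chi}(\widetilde{\Sigma_{X}})$. A separate argument (Step~2 in the paper) then shows, using that $\exp_{\chi}|_{\Sigma|_{X}}$ and $\exp_{\chi}|_{\widetilde{\Sigma_{X}}}$ have the same rank, that after shrinking $\Sigma$ one has $\exp_{\chi}(\Sigma|_{X})=\exp_{\chi}(\widetilde{\Sigma_{X}})$. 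In effect, the subbundle $(TX^{\perp_{\pi}})^{*}$ is precisely a complement to $\ker d(\exp_{\chi}|_{N})$ along $X$, and passing to it is what upgrades ``constant rank'' to ``immersion'' so that the Appendix becomes applicable. Inserting this step into your argument would close the gap.
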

	\begin{proof}
		We divide the proof into four steps.
		
		\vspace{0.2cm}
		\noindent
		\underline{Step 1:} Construction of the embedded submanifold $P\subset M$.
		
		\vspace{0.1cm}
		\noindent
		Choose a Poisson spray $\chi\in\mathfrak{X}(T^{*}M)$ and denote by $\exp_{\chi}:\Sigma\subset T^{*}M\rightarrow M$ the corresponding spray exponential. Notice that the restriction $\exp_{\chi}:\Sigma|_{X}\rightarrow M$ takes values in $Sat(X)$, because sprays trace cotangent paths \cite[\S 1]{Apaths}. Indeed, for $\xi\in\Sigma|_{X}$, the curve $\gamma(t):=\mathrm{pr}(\phi_{\chi}^{t}(\xi))$ satisfies  
			\begin{align*}
				\gamma'(t)&=(\mathrm{d}\mathrm{pr})_{\phi_{\chi}^{t}(\xi)}\left(\frac{\mathrm{d}}{\mathrm{d}t}\phi_{\chi}^{t}(\xi)\right)
				=(\mathrm{d}\mathrm{pr})_{\phi_{\chi}^{t}(\xi)}\left(\chi(\phi_{\chi}^{t}(\xi))\right)
				=\pi^{\sharp}_{\gamma(t)}(\phi_{\chi}^{t}(\xi)),
			\end{align*}
			showing that $t\mapsto(\phi_{\chi}^{t}(\xi),\gamma(t))$ is a cotangent path between $\gamma(0)=\mathrm{pr}(\xi)$ and $\gamma(1)=\exp_{\chi}(\xi)$. Consequently, $\mathrm{pr}(\xi)$ and $\exp_{\chi}(\xi)$ lie in the same leaf of $(M,\pi)$, hence $\exp_{\chi}(\Sigma|_{X})\subset Sat(X)$.
		
		Choosing a complement to $TX^{\perp_{\pi}}$ in $TM|_{X}$, we get an inclusion $(TX^{\perp_{\pi}})^{*}\subset T^{*}M|_{X}$. The restriction of the spray exponential $\exp_{\chi}:(TX^{\perp_{\pi}})^{*}\cap\Sigma\rightarrow M$ fixes points of $X$, and its differential along $X$ reads \cite[Lemma 24]{transversals}:
		\[
		\mathrm{d}\exp_{\chi}:T_{x}X\oplus(T_{x}X^{\perp_{\pi}})^{*}\rightarrow T_{x}M:(v,\xi)\mapsto v+\pi^{\sharp}(\xi).
		\]
		This map is injective, for if $\pi^{\sharp}(\xi)=-v\in T_{x}X$, then $\xi\in(\pi^{\sharp})^{-1}(T_{x}X)=(T_{x}X^{\perp_{\pi}})^{\circ}$ and therefore $\xi\in (T_{x}X^{\perp_{\pi}})^{*}\cap(T_{x}X^{\perp_{\pi}})^{\circ}=\{0\}$.  Proposition \ref{embedding} and Remark \ref{nbhd} in the Appendix imply that, shrinking $\Sigma$ if necessary,  the map $\exp_{\chi}:(TX^{\perp_{\pi}})^{*}\cap\Sigma\rightarrow M$ is an embedding.
		Setting
		\[
		P:=\exp_{\chi}\big((TX^{\perp_{\pi}})^{*}\cap\Sigma\big),
		\] 
		this is an embedded submanifold of $M$ containing $X$ that lies inside $Sat(X)$.
		
		\vspace{0.2cm}
		\noindent
		\underline{Step 2:} Shrinking $\Sigma$ if necessary, we have $P=\exp_{\chi}(\Sigma|_{X})$.
		
		\vspace{0.1cm}
		\noindent
		To see this, let us denote for short $\Sigma_{X}:=\Sigma|_{X}\subset T^{*}M|_{X}$ and $\widetilde{\Sigma_{X}}:=(TX^{\perp_{\pi}})^{*}\cap\Sigma$.
		
		First, we claim that the restriction $\left.\exp_{\chi}\right|_{\Sigma_{X}}$ has constant rank, equal to the rank of $\left.\exp_{\chi}\right|_{\widetilde{\Sigma_{X}}}$. Indeed, using the self-dual pair \eqref{pair}, we have that
		\[
		\ker\left(\mathrm{d}\left(\exp_{\chi}|_{\Sigma_{X}}\right)\right)=\ker(\mathrm{d}\exp_{\chi})\cap T(\mathrm{pr}^{-1}(X))=\ker(\mathrm{d}\mathrm{pr})^{\perp_{\Omega_{\chi}}}\cap T(\mathrm{pr}^{-1}(X)),
		\]
		which has constant rank equal to $\dim M- rk(TX^{\perp_{\pi}})$ by Lemma \ref{realization}. Consequently, the rank of $\left.\exp_{\chi}\right|_{\Sigma_{X}}$ is equal to $\dim X + rk(TX^{\perp_{\pi}})$, which is the rank of $\left.\exp_{\chi}\right|_{\widetilde{\Sigma_{X}}}$.

		Using the claim just proved, we will now show that $\exp_{\chi}(\widetilde{\Sigma_{X}})=\exp_{\chi}(\Sigma_{X})$, shrinking $\Sigma$ if necessary. It is enough to prove that every point $\xi\in\widetilde{\Sigma_{X}}$ has a neighborhood $V^{\xi}\subset\Sigma_{X}$ such that $\exp_{\chi}(V^{\xi})\subset\exp_{\chi}(\widetilde{\Sigma_{X}})$. We keep in mind the diagram
		\[
		\begin{tikzcd}[column sep=large, row sep=large]
			\widetilde{\Sigma_{X}}\subset\big(TX^{\perp_{\pi}}\big)^{*}\arrow[r,hookrightarrow]\arrow[d,"\exp_{\chi}|_{\widetilde{\Sigma_{X}}}",swap,"\simeq"'{rotate=90,yshift=-5pt,xshift=-6pt}] & \Sigma_{X}\subset T^{*}M|_{X}\arrow[d,"\exp_{\chi}|_{\Sigma_{X}}"]\\
			\exp_{\chi}(\widetilde{\Sigma_{X}})\arrow[r,hookrightarrow] & M
		\end{tikzcd}.
		\]
		Pick $\xi\in\widetilde{\Sigma_{X}}\subset\Sigma_{X}$. Since $\exp_{\chi}|_{\Sigma_{X}}$ has constant rank, there is an open  $U^{\xi}\subset\Sigma_{X}$ around $\xi$ such that $\exp_{\chi}(U^{\xi})\subset M$ is an embedded submanifold. As $\exp_{\chi}$ is an embedding on $\widetilde{\Sigma_{X}}$, also $\exp_{\chi}(U^{\xi}\cap\widetilde{\Sigma_{X}})\subset M$ is an embedded submanifold. Since $\dim\exp_{\chi}(U^{\xi}\cap\widetilde{\Sigma_{X}})=\dim\exp_{\chi}(U^{\xi})$ by the previous claim, the inverse function theorem implies that $\exp_{\chi}(U^{\xi}\cap\widetilde{\Sigma_{X}})$ is open in $\exp_{\chi}(U^{\xi})$. Since $\exp_{\chi}|_{U^{\xi}}:U^{\xi}\rightarrow\exp_{\chi}(U^{\xi})$ is continuous, the set $\exp_{\chi}|_{U^{\xi}}^{-1}\big(\exp_{\chi}(U^{\xi}\cap\widetilde{\Sigma_{X}})\big)$ is open in $U^{\xi}$, hence in $\Sigma_{X}$. Setting $V^{\xi}:=\exp_{\chi}|_{U^{\xi}}^{-1}\big(\exp_{\chi}(U^{\xi}\cap\widetilde{\Sigma_{X}})\big)$ proves the assertion.
		
		\vspace{0.2cm}
		\noindent
		\underline{Step 3:} $P$ is a Poisson submanifold of $(M,\pi)$.
		
		\vspace{0.1cm}
		\noindent
		We use the previous step, which states that $P=\exp_{\chi}(\Sigma_{X})$. Pick a point $x\in P$ and let $\xi\in\Sigma_{X}$ be such that $\exp_{\chi}(\xi)=x$. We have to show that $\pi^{\sharp}(T_{x}P^{\circ})=\{0\}$. Making use of the dual pair \eqref{pair}, we have
		\[
		\pi^{\sharp}(T_{x}P^{\circ})=\left[(\mathrm{d}\exp_{\chi})_{\xi}\circ\pi_{\chi}^{\sharp}\circ(\mathrm{d}\exp_{\chi})_{\xi}^{*}\right]\big(T_{x}P^{\circ}\big),
		\]
		so it is enough to show that
		\[
		\pi_{\chi}^{\sharp}\left((\mathrm{d}\exp_{\chi})_{\xi}^{*}(T_{x}P^{\circ})\right)\subset \ker(\mathrm{d}\exp_{\chi})_{\xi}=\pi_{\chi}^{\sharp}(\ker \mathrm{d}\mathrm{pr})_{\xi}^{\circ}.
		\]
		To see that this inclusion holds, note that $(\ker \mathrm{d}\mathrm{pr})_{\xi}\subset T_{\xi}\Sigma_{X}$ and $(\mathrm{d}\exp_{\chi})_{\xi}(T_{\xi}\Sigma_{X})\subset T_{x}P$, which implies that
		\[
		(\mathrm{d}\exp_{\chi})_{\xi}^{*}(T_{x}P^{\circ})\subset (T_{\xi}\Sigma_{X})^{\circ}\subset(\ker \mathrm{d}\mathrm{pr})_{\xi}^{\circ}.
		\]
		We now showed that $P\subset(M,\pi)$ is a Poisson submanifold, which finishes Step 3.
		
		\vspace{0.2cm}
		\noindent
		\underline{Step 4:} Construction of the neighborhood $U$ of $X$.
		
		\vspace{0.1cm}
		\noindent
		The idea is to extend $\exp_{\chi}:(TX^{\perp_{\pi}})^{*}\cap\Sigma\rightarrow M$ to a local diffeomorphism, using the same reasoning as in the proof of Proposition \ref{embedding} in the Appendix. Choosing a complement 
		\[
		TM|_{X}=TX\oplus\pi^{\sharp}\big(TX^{\perp_{\pi}}\big)^{*}\oplus C=TP|_{X}\oplus C,
		\]
		and a linear connection $\nabla$ on $TM$, we obtain a map
		\[
		\psi:V\subset \big((TX^{\perp_{\pi}})^{*}\oplus C\big)\rightarrow M:(\xi,c)\mapsto\exp_{\nabla}\big(Tr_{\exp_{\chi}(t\xi)}c\big),
		\]
		which is a diffeomorphism onto a neighborhood of $X$.
		Here $V$ is a suitable convex neighborhood of the zero section, and $Tr_{\exp_{\chi}(t\xi)}$ denotes parallel transport along the curve $t\mapsto\exp_{\chi}(t\xi)$ for $t\in[0,1]$. Note that $\psi$ satisfies $\psi(\xi,0)=\exp_{\chi}(\xi)$. Consequently, shrinking $P$ if necessary, we can assume that 
		\[
		P=\psi\left(V\cap\left(\big(TX^{\perp_{\pi}}\big)^{*}\oplus\{0\}\right)\right).
		\]
		We now set $U:=\psi(V)$, and we check that $P$ is the Poisson saturation of $X$ in $(U,\pi|_{U})$.
		
		On one hand, since $(TX^{\perp_{\pi}})^{*}$ is closed in $(TX^{\perp_{\pi}})^{*}\oplus C$, also $P$ is closed in $U$. It follows that $P$ is saturated, being a properly embedded Poisson submanifold. Hence, the saturation of $X$ in $(U,\pi|_{U})$ is contained in $P$. On the other hand, if $\exp_{\chi}(\xi)=\psi(\xi,0)\in P\subset U$, then also $\exp_{\chi}(t\xi)\in U$ for $t\in[0,1]$ since $V$ is convex. Consequently, the path $t\mapsto(\phi^{t}_{\chi}(\xi),\exp_{\chi}(t\xi))$ is a cotangent path covering a path in $U$ that connects $\exp_{\chi}(\xi)$ with a point in $X$. This shows that $\exp_{\chi}(\xi)$ is contained in the Poisson saturation of $X$ in $(U,\pi|_{U})$.
	\end{proof}
	
	
	The theorem above shows that the saturation of a coregular submanifold $X\subset(M,\pi)$ in some neighborhood $(U,\pi|_{U})$ of $X$ is an embedded Poisson submanifold. Clearly, one cannot take $U$ to be all of $M$ in general. In this respect, we have the following sufficient condition.
	\begin{cor}\label{open}
		Let $X\subset(M,\pi)$ be a coregular submanifold. If the submanifold $P$ constructed in Theorem \ref{satsmooth} is open in $Sat(X)$ for the induced topology, then $Sat(X)$ is an embedded submanifold of $M$.
	\end{cor}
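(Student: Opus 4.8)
The plan is to show that $Sat(X)$ is an embedded submanifold by propagating, via leaf-preserving diffeomorphisms of $M$, the local submanifold structure that $P$ provides in a neighborhood of $X$. The two inputs I would use are: that $Sat(X)$, being a union of symplectic leaves, is invariant under Hamiltonian flows; and that $P$, being embedded in $M$ and open in $Sat(X)$ by hypothesis, cuts out the desired submanifold charts near points of $X$.

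First I would record that if $h\in C^{\infty}(M)$ and $\phi^{t}_{h}$ is the (locally defined) flow of the Hamiltonian vector field of $h$, then $\phi^{t}_{h}$ maps each symplectic leaf into itself, hence maps $Sat(X)$ into $Sat(X)$; since $(\phi^{t}_{h})^{-1}=\phi^{-t}_{h}$ has the same property, any finite composition $\phi=\phi^{t_{k}}_{h_{k}}\circ\cdots\circ\phi^{t_{1}}_{h_{1}}$ — a diffeomorphism between open subsets of $M$ — satisfies $\phi\big(\mathrm{dom}(\phi)\cap Sat(X)\big)=\mathrm{im}(\phi)\cap Sat(X)$. Next, since $P$ is embedded in $M$ and open in the subspace topology of $Sat(X)$, there is an open set $W\subseteq M$ with $W\cap Sat(X)=P$; intersecting a submanifold chart for $P\subset M$ with $W$ shows that every $x_{0}\in P$ — in particular every point of $X$ — has an open neighborhood $O\subseteq M$ and a chart of $M$ on $O$ in which $O\cap Sat(X)=O\cap P$ is a linear subspace of dimension $\dim P$.

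Now let $y\in Sat(X)$ be arbitrary. By definition it lies on a symplectic leaf $L$ meeting $X$; pick $x_{0}\in L\cap X\subseteq P$. Using the standard description of the symplectic leaves of $(M,\pi)$ as the orbits of the pseudogroup generated by the flows of Hamiltonian vector fields, there is a composition $\phi$ as above, defined on a neighborhood of $x_{0}$, with $\phi(x_{0})=y$. Shrinking the neighborhood $O$ of $x_{0}$ so that $\phi$ is defined on it, the previous paragraph gives $\phi(O\cap Sat(X))=\phi(O)\cap Sat(X)$; since $\phi\colon O\to\phi(O)$ is a diffeomorphism onto an open neighborhood of $y$ and $O\cap Sat(X)=O\cap P$ is an embedded submanifold of $O$ of dimension $\dim P$, the image $\phi(O)\cap Sat(X)$ is an embedded submanifold of $\phi(O)$ of the same dimension. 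Thus every point of $Sat(X)$ has a neighborhood in $M$ in which $Sat(X)$ is an embedded submanifold of dimension $\dim P$, which means $Sat(X)$ is an embedded submanifold of $M$.

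The step I expect to require the most care is the appeal, in the last paragraph, to the orbit description of symplectic leaves, together with the bookkeeping around locally defined flows — ensuring their domains are open and that the composition still restricts to a self-map of $Sat(X)$, which rests on the fact that both $\phi$ and $\phi^{-1}$ preserve leaves; the remaining arguments are elementary point-set topology once the leaf-preserving diffeomorphism $\phi$ is in hand. If one prefers to avoid invoking the orbit theorem, one can instead join $y$ to $x_{0}$ by a smooth path inside $L$, cover the path by finitely many Weinstein splitting charts, and connect consecutive points by Hamiltonian flows, which reproduces the required $\phi$ by elementary means.
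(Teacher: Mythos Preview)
Your proof is correct and follows essentially the same strategy as the paper: both cover $Sat(X)$ by Hamiltonian translates of $P$ and use the hypothesis that $P$ is open in $Sat(X)$ (together with the leaf-preservation of Hamiltonian flows) to conclude embeddedness. The only difference is packaging---the paper invokes an abstract gluing lemma for a collection $\{\phi^{1}_{X_{f}}(P)\}$ of embedded submanifolds with mutually open overlaps and then observes the resulting topology is the induced one, whereas you directly produce a slice chart at each point of $Sat(X)$ by transporting one from $P$; the underlying argument is the same.
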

	\begin{proof}
		Recall the following general fact \cite[Lemma 4.5]{bookpoiss}: if $\{N_i\}_{i\in\mathcal{I}}$ is a collection of embedded submanifolds of $M$, all of the same dimension, such that $N_i\cap N_j$ is open in $N_i$ for all $i,j\in\mathcal{I}$, then $N:=\cup_{i\in\mathcal{I}}N_i$ is a smooth manifold, possibly not second countable, for which the inclusion $N\hookrightarrow M$ is an immersion. The smooth structure is uniquely determined by the condition that the maps $N_i\hookrightarrow N$ are smooth open embeddings.	
		
		We want to apply this fact to the collection $\{\phi^{1}_{X_{f}}(P): f\in C^{\infty}_{c}([0,1]\times M)\}$, where $\phi^{1}_{X_{f}}$ denotes the time $1$-flow of the Hamiltonian vector field associated with the compactly supported function $f\in C^{\infty}_{c}([0,1]\times M)$. We have to check that $\phi^{1}_{X_{f}}(P)\cap\phi^{1}_{X_{g}}(P)$ is open in $\phi^{1}_{X_{f}}(P)$. To this end, note that both $\phi^{1}_{X_{f}}(P)$ and $\phi^{1}_{X_{g}}(P)$ are open in $Sat(X)$, since by assumption $P$ is open in $Sat(X)$ and $\phi^{1}_{X_{f}},\phi^{1}_{X_{g}}$ are diffeomorphisms preserving $Sat(X)$. Hence, also $\phi^{1}_{X_{f}}(P)\cap\phi^{1}_{X_{g}}(P)$ is open in $Sat(X)$, so there exists an open $V\subset M$ such that
		\[
		\phi^{1}_{X_{f}}(P)\cap\phi^{1}_{X_{g}}(P)=V\cap Sat(X).
		\]
		Since also $\phi^{1}_{X_{f}}(P)=U\cap Sat(X)$ for some open $U\subset M$, we obtain
		\[
		\phi^{1}_{X_{f}}(P)\cap\phi^{1}_{X_{g}}(P)=\phi^{1}_{X_{f}}(P)\cap\phi^{1}_{X_{g}}(P)\cap U=V\cap(U\cap Sat(X))=V\cap\phi^{1}_{X_{f}}(P),
		\]
		which shows that $\phi^{1}_{X_{f}}(P)\cap\phi^{1}_{X_{g}}(P)$ is open in $\phi^{1}_{X_{f}}(P)$.
		
		So we can apply the fact mentioned above, which gives $Sat(X)$ a smooth manifold structure, a priori not necessarily second countable, for which $Sat(X)\hookrightarrow M$ is an immersion. But since the topology of this smooth structure is generated by open subsets of the submanifolds $\phi^{1}_{X_{f}}(P)$, it coincides with the induced topology on $Sat(X)$. In particular, it is second countable, and $Sat(X)$ is an embedded submanifold of $M$.
	\end{proof}
	
	The proof of Corollary \ref{open} breaks down if $P$ is not open in $Sat(X)$, because the technical requirements of \cite[Lemma 4.5]{bookpoiss} are no longer met. See for instance Example \ref{fail} below.
	
	\begin{remark}\label{rem:fail}
		We comment on the condition in Corollary \ref{open} that $P=\exp_{\chi}(\Sigma|_{X})$ needs to be open in $Sat(X)$ for the induced topology. 
		This occurs exactly when we are able to find a small transversal $\tau\subset(M,\pi)$ to the leaves such that $\tau\cap Sat(X)=X$. 
		
		To see that then $\exp_{\chi}(\Sigma|_{X})$ is indeed open in $Sat(X)$ with respect to the induced topology, we note that $\exp_{\chi}:\Sigma|_{\tau}\rightarrow M$ is a submersion, shrinking $\Sigma$ if necessary. Indeed, at points $p\in\tau$, the differential
		\[
		\mathrm{d}\exp_{\chi}:T_{p}\tau\oplus T_{p}^{*}M\rightarrow T_{p}M:(v,\xi)\mapsto v+\pi^{\sharp}(\xi)
		\]
		is surjective since $\tau\subset(M,\pi)$ is a transversal. Hence $\exp_{\chi}$ is of maximal rank in a neighborhood of $\tau\subset\Sigma|_{\tau}$. In particular, shrinking $\Sigma$ if needed, we have that $\exp_{\chi}\big(\Sigma|_{\tau}\big)\subset M$ is open. It now suffices to remark that $\exp_{\chi}(\Sigma|_{X})=\exp_{\chi}(\Sigma|_{\tau})\cap Sat(X)$. The forward inclusion is clear, since $X\subset\tau$ and $\exp_{\chi}(\Sigma|_{X})\subset Sat(X)$. For the backward inclusion, assume that $(p,\xi)\in\Sigma|_{\tau}$ is such that $\exp_{\chi}(\xi)\in Sat(X)$. Since $p$ lies in the same leaf as $\exp_{\chi}(\xi)\in Sat(X)$ and $Sat(X)$ is saturated, it follows that $p\in Sat(X)$. Consequently, $p\in \tau\cap Sat(X)=X$. This shows that $\exp_{\chi}(\Sigma|_{X})=\exp_{\chi}(\Sigma|_{\tau})\cap Sat(X)$ is open in $Sat(X)$ for the induced topology.
	\end{remark}
	
	In the particular case where $X$ is a point, then $Sat(X)$ is just the leaf through $X$, which is well-known to possess a natural smooth structure. Indeed, each leaf of a Poisson manifold is an initial submanifold, so in particular it possesses a unique smooth structure that turns it into an immersed submanifold. For an arbitrary coregular submanifold $X$, its saturation does not have a natural smooth structure, as illustrated in the following example.
	

	\begin{ex}\label{fail}
		We look at the manifold $(\mathbb{R}^{3}\times S^{1},x,y,z,\theta)$ with Poisson structure $\pi=\partial_{z}\wedge\partial_{\theta}$. Consider the curve $\beta:\mathbb{R}\rightarrow\mathbb{R}^{3}:t\mapsto(\sin(2t),\sin(t),t)$, which is a ``figure eight'' coming out of the $xy$-plane. Denote its image by $\mathcal{C}\subset\mathbb{R}^{3}$, and let $\mathcal{C}_{base}$ be the projection of $\mathcal{C}$ onto the $xy$-plane. The submanifold $X:=\mathcal{C}\times S^{1}\subset\mathbb{R}^{3}\times S^{1}$ is embedded, and we claim that it is coregular. 
		To see this, we only have to check that $\dim(T_{p}X\cap T_{p}L)$ is constant for $p\in X$, where $L$ denotes the leaf through $p$. Since at a point $p=(\beta(t_0),\theta_0)$ we have
		\[
		T_{p}X=\text{Span}\{\left.\partial_{\theta}\right|_{p},2\cos(2t_0)\left.\partial_{x}\right|_{p}+\cos(t_0)\left.\partial_{y}\right|_{p}+\left.\partial_{z}\right|_{p}\},
		\]
		it is clear that $T_{p}X\cap T_{p}L=\text{Span}\{\left.\partial_{\theta}\right|_{p}\}$, since $\cos(t_0)$ and $\cos(2t_0)$ cannot both be zero. Hence, $X\subset(\mathbb{R}^{3}\times S^{1},\pi)$ is coregular. Its saturation is given by $Sat(X)=\mathcal{C}_{base}\times\mathbb{R}\times S^{1}$.
		
		The saturation has two obvious smooth structures that turn it into an immersed submanifold, coming from those on the ``figure eight''. But neither of them can be called natural, because for both smooth structures the inclusion $X\hookrightarrow Sat(X)$ is not even continuous.
		
		
		Let us also come back to the proof of Corollary \ref{open} and see why it fails in this case. We refer to the figure below, where we removed the $S^{1}$-factor, which is not essential to the spirit of the example. The embedded submanifold $P$ in this case is obtained by slightly thickening the curve in vertical direction. One can take a Hamiltonian flow $\phi^{1}_{X_f}$ such that $\phi^{1}_{X_f}(P)\cap P$ consists of vertical segments of the line in which the cylinder intersects itself. This is not an open subset of $P$, so we can no longer apply \cite[Lemma 4.5]{bookpoiss}. And indeed, the conclusion of Corollary \ref{open} fails in this example.
		
		\begin{figure}[H]
			\includegraphics[height=5cm,width=3cm, trim=4cm 2cm 4cm 0.8cm]{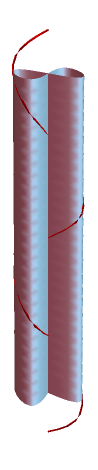}
			\caption{The coregular submanifold $X$ and its saturation $Sat(X)$. This is the picture in $\mathbb{R}^{3}$; the $S^{1}$-factor is omitted for the sake of depiction.}
		\end{figure}
	\end{ex}

	As a consequence of Theorem \ref{satsmooth}, we obtain an alternative characterization of coregular submanifolds. It turns out that the two extreme examples -- Poisson submanifolds and transversals -- are the building blocks of any coregular submanifold.
	
	\begin{prop}\label{intersection}
		A submanifold $X\subset(M,\pi)$ is coregular if and only if $X$ is the intersection of a Poisson submanifold $P\subset(M,\pi)$ with a transversal $\tau\subset(M,\pi)$.
	\end{prop}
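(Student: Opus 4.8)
The plan is to prove both implications, with the forward direction relying essentially on Theorem~\ref{satsmooth}. For the ``only if'' direction, suppose $X\subset(M,\pi)$ is regular. By Theorem~\ref{satsmooth} there is an embedded Poisson submanifold $(P,\pi_P)\subset(M,\pi)$ containing $X$, which is the saturation of $X$ in some neighborhood $(U,\pi|_U)$. The key observation is that $X$ is a \emph{transversal} inside $(P,\pi_P)$: indeed, from the differential formula $(d\exp_\chi)_x\colon T_xX\oplus(T_xX^{\perp_\pi})^*\to T_xM$, $(v,\xi)\mapsto v+\pi_x^\sharp(\xi)$ used in Step~1 of the proof of Theorem~\ref{satsmooth}, one reads off that $T_xP=T_xX\oplus\pi_x^\sharp\big((T_xX^{\perp_\pi})^*\big)=T_xX+\pi_x^\sharp(T_x^*M)=T_xX+T_xP\cap T_xL$, where $L$ is the symplectic leaf through $x$ (note $L\subset P$ since $P$ is saturated). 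This says exactly that $X$ is transverse to the symplectic leaves of $(P,\pi_P)$, i.e. $X$ is a transversal in $P$. Now I invoke the elementary fact that a transversal $X$ in a Poisson manifold $(P,\pi_P)$ can be realized, shrinking $P$ if necessary, as the intersection of $P$ with a submanifold $\tau\subset M$ that is transverse to the symplectic leaves of $(M,\pi)$; concretely, pick any submanifold $\tau\subset M$ with $X\subset\tau$, $\tau\cap P=X$ (locally), and $T_x\tau\oplus T_xP=T_xM$ for $x\in X$ — such a $\tau$ exists by choosing a complement to $TP|_X$ in $TM|_X$ and exponentiating — and then $T_x\tau + T_xL \supseteq T_x\tau + (T_xP\cap T_xL) + \dots$; one checks $\tau$ is a transversal in $(M,\pi)$ because $T_x\tau+T_xL_M = T_xM$ follows from $T_xP + T_xL_M = T_xM$ (as $X$, hence $P$, meets the leaf $L_M$ transversally — $P\supset L_M$ actually, so $T_xL_M\subset T_xP$ and we need $T_x\tau + T_xL_M\supseteq$ ... ). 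I will need to be slightly careful here: since $P$ is a \emph{Poisson} submanifold, the leaf $L_M$ of $(M,\pi)$ through $x\in X$ lies inside $P$ and coincides with the leaf of $(P,\pi_P)$ through $x$; transversality of $X$ to the leaves of $P$ means $T_xX + T_xL_M = T_xP$, and adding any complement $T_x\tau$ to $T_xP$ in $T_xM$ gives $T_x\tau + T_xL_M = T_x\tau + T_xX + T_xL_M = T_x\tau + T_xP = T_xM$, so $\tau$ is indeed a transversal in $(M,\pi)$. Finally, $\tau\cap P = X$ locally by construction, giving $X=P\cap\tau$ as desired.

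For the ``if'' direction, suppose $X = P\cap\tau$ with $P\subset(M,\pi)$ a Poisson submanifold and $\tau\subset(M,\pi)$ a transversal. I must show $X$ is regular, i.e. that $TX^{\perp_\pi} = \pi^\sharp(TX^0)$ has locally constant rank. At $x\in X$ the leaf $L$ of $(M,\pi)$ through $x$ is contained in $P$ (Poisson submanifolds are unions of leaves), and since $\tau$ is a transversal, $T_x\tau + T_xL = T_xM$, equivalently $T_x\tau \cap T_xL$ has dimension $\dim\tau + \dim L - \dim M$, and $T_xL\subset T_xP$. One has $T_xX = T_xP\cap T_x\tau$. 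Now $T_xX^{\perp_\pi}$ is the symplectic orthogonal (inside the symplectic leaf $T_xL$, with form $\omega_x:=(\pi|_L)^{-1}_x$) of $T_xX\cap T_xL$, by the remark following equation~\eqref{sequence} in the excerpt. So it suffices to show $\dim(T_xX\cap T_xL)$ is locally constant along $X$. We compute $T_xX\cap T_xL = T_xP\cap T_x\tau\cap T_xL = T_x\tau\cap T_xL$ (using $T_xL\subset T_xP$), and $\dim(T_x\tau\cap T_xL) = \dim\tau + \dim L - \dim M$ is locally constant since $\dim L$ is locally constant along $P$ — here I use that $P$ is a Poisson submanifold so it is a union of leaves of $(M,\pi)$, whose dimensions vary lower-semicontinuously, but more precisely: $\dim L = \operatorname{rk}\pi_x$ for $x\in P$, and I want this constant along $X$. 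This is \emph{not} automatic, so I will instead argue differently: on $P$, the Poisson structure $\pi_P$ has the same symplectic leaves as those of $\pi$ contained in $P$; the transversality condition $T_x\tau\cap T_xP$ is transverse to the leaves of $P$, and by the defining property of a transversal the function $x\mapsto\dim(T_x\tau\cap T_xL)$ equals $\dim(T_x\tau\cap T_xP) + \dim L - \dim P$ \emph{only if} $\tau\cap P$ is transverse to the leaves, which is the content of $\tau$ being a transversal restricted to $P$ — and a transversal meets every leaf it touches in an open subset of... no. Let me restart this cleanly in the final write-up: the honest statement is $\operatorname{rk}(pr\circ\pi^\sharp)_x = \operatorname{rk}(pr\circ\pi_P^\sharp)_x + \operatorname{corank}(TX^{\perp_\pi}\ \text{in}\ TP|_X\,...)$; I will show directly that $\dim T_xX^{\perp_\pi}$ equals the constant $\dim P - \dim X$. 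Since $\tau$ is transverse to the leaves of $(M,\pi)$ and the leaf $L\subset P$, we get $T_x\tau^{\perp_\pi} = \pi_x^\sharp(T_x\tau^0)$; but $T_x\tau^0\subset T_x\tau^0$... The clean route: $T_xX^0 = T_xP^0 + T_x\tau^0$, hence $T_xX^{\perp_\pi} = \pi_x^\sharp(T_xP^0) + \pi_x^\sharp(T_x\tau^0) = 0 + \pi_x^\sharp(T_x\tau^0) = T_x\tau^{\perp_\pi}$, using $\pi_x^\sharp(T_xP^0)=0$ since $P$ is Poisson. And $\tau$ being a transversal means $\pi^\sharp(T\tau^0) = TM|_\tau/T\tau$ has full rank, i.e. $\dim T_x\tau^{\perp_\pi} = \dim M - \dim\tau = \operatorname{codim}\tau$, which is constant. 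Therefore $\dim T_xX^{\perp_\pi} = \operatorname{codim}_M\tau$ is constant along $X$, so $X$ is regular.

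The main obstacle is the ``only if'' direction, specifically the step of producing a global (or at least, neighborhood-sized) transversal $\tau\subset(M,\pi)$ with $\tau\cap P = X$ out of the local/infinitesimal transversality data — one needs to be careful that $X$ sits as a transversal inside the \emph{Poisson submanifold} $P$ rather than inside $M$, and then promote this to a genuine transversal of $M$ by adding a complement to $TP|_X$. The identity $T_xP^0 = \pi_x^\sharp{}^{-1}(\ker$-type$)$ and the fact that Poisson submanifolds satisfy $\pi^\sharp(TP^0)=0$ make the ``if'' direction short once phrased as $T_xX^0 = T_xP^0 + T_x\tau^0$ and hence $T_xX^{\perp_\pi}=T_x\tau^{\perp_\pi}$; I expect the write-up to spend most of its length on carefully constructing $\tau$ in the converse. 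I would also double-check that the $\tau$ I build is embedded and that $\tau\cap P=X$ holds on a full neighborhood (not just infinitesimally), which follows from the inverse function theorem applied to the exponential of the chosen complement, exactly as in Step~4 of the proof of Theorem~\ref{satsmooth}.
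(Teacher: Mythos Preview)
Your argument is correct and follows essentially the same route as the paper: use Theorem~\ref{satsmooth} to produce $P$, exponentiate a complement of $TP|_X$ in $TM|_X$ to build $\tau$ with $T\tau|_X=TX\oplus E$, and for the converse use $TX^{0}=TP^{0}+T\tau^{0}$ together with $\pi^{\sharp}(TP^{0})=0$ and the injectivity of $\pi^{\sharp}|_{T\tau^{0}}$. One small slip to fix in the write-up: the condition you want is $T_x\tau + T_xP = T_xM$, not $T_x\tau \oplus T_xP = T_xM$ (the sum is never direct since $T_xX$ lies in both), and you should prune the exploratory dead ends in the ``if'' direction down to the clean final paragraph.
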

	A transversal $\tau\subset(M,\pi)$ is also transverse to any Poisson submanifold $P\subset (M,\pi)$, since the intersection of $P$ with any leaf of $(M,\pi)$ is open in the leaf. Indeed, if $p\in P$ and $L$ is the leaf through $p$, then 
	\[
	T_{p}M=T_{p}\tau+T_{p}L=T_{p}\tau+T_{p}(P\cap L)\subset T_{p}\tau + T_{p}P,
	\]
	which shows that $\tau\pitchfork P$. In particular, the intersection $\tau\cap P$ is smooth.
	\begin{proof}[Proof of Prop. \ref{intersection}]
		First assume that $X\subset (M,\pi)$ is a coregular submanifold. Theorem \ref{satsmooth} then gives a Poisson submanifold $P\subset(M,\pi)$ containing $X$, and the proof shows that
		\begin{equation}\label{normal}
			TP|_{X}=TX\oplus\pi^{\sharp}\big(TX^{\perp_{\pi}}\big)^{*}. 
		\end{equation}
		Choose a complement $E$ to this subbundle of $TM|_{X}$, i.e. $TM|_{X}=TX\oplus\pi^{\sharp}\big(TX^{\perp_{\pi}}\big)^{*}\oplus E$. Using a (metric) exponential map, we can construct a submanifold $\tau\subset M$ containing $X$ such that $T\tau|_{X}=TX\oplus E$.
		For small enough $\tau$, we have $\tau\cap P=X$, and moreover
		\[
		TM|_{X}=TX\oplus\pi^{\sharp}\big(TX^{\perp_{\pi}}\big)^{*}\oplus E=\big(TX+\text{Im}(\pi^{\sharp}|_{X})\big)\oplus E=\text{Im}(\pi^{\sharp}|_{X})+T\tau|_{X},
		\]
		which shows that $\tau$ is a transversal along $X$. Shrinking $\tau$ if necessary, this implies that $\tau$ is a transversal in $(M,\pi)$. This proves the forward implication.
		
		For the converse, assume that $X=\tau\cap P$ is a submanifold of $M$, where $P\subset (M,\pi)$ is a Poisson submanifold and $\tau\subset(M,\pi)$ is a transversal. Then $TX=T\tau|_{X}\cap TP|_{X}$, so that $TX^{\circ}=(T\tau|_{X})^{\circ}+(TP|_{X})^{\circ}$. Since $P$ is a Poisson submanifold, we get $TX^{\perp_{\pi}}=\pi^{\sharp}((T\tau|_{X})^{\circ})$. Since $\tau$ is a transversal, the restriction $\pi^{\sharp}|_{T\tau^{\circ}}$ is injective, hence $X$ is coregular.
	\end{proof}
	
	In what follows, we denote by $(P,\pi_P)$ the Poisson submanifold containing $X$ that was constructed in Theorem \ref{satsmooth}. We refer to $(P,\pi_P)$ as the \textbf{local Poisson saturation} of $X$. 
	In the next two sections, we prove a normal form theorem for $(P,\pi_P)$ around $X$.
	

	\section{The local model}\label{sec:second}
	
	This section introduces the local model for the local Poisson saturation $(P,\pi_P)$ of a coregular submanifold $X\subset (M,\pi)$. The local model is defined on the vector bundle $(TX^{\perp_{\pi}})^{*}$, which is indeed isomorphic with the normal bundle of $X$ in $P$. An explicit isomorphism is obtained by choosing an embedding $(TX^{\perp_{\pi}})^{*}\hookrightarrow T^{*}M|_{X}$ and then applying the bundle map $\pi^{\sharp}$, see equation \eqref{normal}. The local model involves some extra choices, which we now explain.
	
	Let $X\subset (M,\pi)$ be a coregular submanifold, and choose a \textbf{complement} $W$ to  $TX^{\perp_{\pi}}$ inside $TM|_{X}$. We obtain correspondingly an inclusion map $j:\big(TX^{\perp_{\pi}}\big)^{*}\hookrightarrow T^{*}M|_{X}$. Define skew-symmetric bilinear forms $\sigma\in\Gamma(\wedge^{2}TX^{\perp_{\pi}})$ and $\tau\in\Gamma(T^{*}X\otimes TX^{\perp_{\pi}})$ on the restricted tangent bundle $T\big((TX^{\perp_{\pi}})^{*}\big)|_{X}=TX\oplus(TX^{\perp_{\pi}})^{*}$ by the formulas
	\begin{align}\label{st}
		&\sigma(\xi_{1},\xi_{2})=\pi\big(j(\xi_{1}),j(\xi_{2})\big),\nonumber\\
		&\tau\big((v_{1},\xi_{1}),(v_{2},\xi_{2})\big)=\langle v_{1},j(\xi_{2})\rangle - \langle v_{2},j(\xi_{1})\rangle,
	\end{align}
	for $\xi_{1},\xi_{2}\in\big(T_{x}X^{\perp_{\pi}}\big)^{*}$ and $v_{1},v_{2}\in T_{x}X$. Denote by $\mathcal{E}_{W}(-\sigma-\tau)$ the set of all closed two-forms $\eta$, defined on a neighborhood of $X\subset(TX^{\perp_{\pi}})^{*}$, whose restriction to the zero section $X\subset(TX^{\perp_{\pi}})^{*}$ equals
	\begin{equation}\label{restr}
		\eta|_{X}=-\sigma\oplus -\tau\oplus 0 \in\Gamma(\wedge^{2}TX^{\perp_{\pi}})\oplus\Gamma(T^{*}X\otimes TX^{\perp_{\pi}})\oplus\Gamma(\wedge^{2}T^{*}X).
	\end{equation}
	We refer to a two-form $\eta\in\mathcal{E}_{W}(-\sigma-\tau)$ as a \textbf{closed extension} of $-\sigma-\tau$. Closed extensions of $-\sigma-\tau$ exist, see for instance \cite[Extension Theorem]{weinstein}. 
	
	The local model for the local Poisson saturation of a coregular submanifold $X\overset{i}{\hookrightarrow}(M,\pi)$ is now defined as follows: pull back the Dirac structure $i^{!}L_{\pi}$ on $X$ to $(TX^{\perp_{\pi}})^{*}$ under the submersion $\mathrm{pr}:(TX^{\perp_{\pi}})^{*}\rightarrow X$ and gauge transform by a closed extension $\eta\in\mathcal{E}_{W}(-\sigma-\tau)$. The obtained Dirac structure $\left(\mathrm{pr}^{!}(i^{!}L_{\pi})\right)^{\eta}$ indeed defines a Poisson stucture in a neighborhood of $X\subset(TX^{\perp_{\pi}})^{*}$, as we now show.

	\begin{prop}\label{model}
		Let $X\subset(M,\pi)$ be a coregular submanifold. Fix a complement $W$ to  $TX^{\perp_{\pi}}$ in $TM|_{X}$, define $\sigma\in\Gamma(\wedge^{2}TX^{\perp_{\pi}})$ and $\tau\in\Gamma(T^{*}X\otimes TX^{\perp_{\pi}})$ by the formulas \eqref{st} and
		let $\eta\in\mathcal{E}_{W}(-\sigma-\tau)$ be any closed extension. The Dirac structure $\left(\mathrm{pr}^{!}(i^{!}L_{\pi})\right)^{\eta}$ is Poisson on a neighborhood $U$ of $X\subset\big(TX^{\perp_{\pi}}\big)^{*}$.
	\end{prop}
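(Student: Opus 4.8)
The plan is to show that the Dirac structure $L:=\big(pr^{*}(i^{*}L_{\pi})\big)^{\eta}$ is transverse to the tangent bundle of $\big(TX^{\perp_{\pi}}\big)^{*}$ along the zero section $X$. Recall that a Dirac structure on a manifold $N$ is the graph of a (necessarily Poisson) bivector at a point $p$ exactly when it meets $T_{p}N$ trivially, and that this is an open condition on $p$; hence it suffices to verify $L_{x}\cap T_{x}\big((TX^{\perp_{\pi}})^{*}\big)=\{0\}$ for every $x$ in the zero section. First I would record that $L$ is a genuine smooth Dirac structure on a neighborhood of $X$: since $X\subset(M,\pi)$ is regular, observation b) above (i.e.\ \cite[Prop. 5.6]{bursztyn}) gives that $i^{*}L_{\pi}$ is a smooth Dirac structure on $X$; its pullback under the submersion $pr$ is again smooth; and gauging by the closed form $\eta$ preserves Dirac structures.

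Next I would compute $L_{x}$ for $x\in X$ explicitly, using the canonical splitting $T_{x}\big((TX^{\perp_{\pi}})^{*}\big)=T_{x}X\oplus(T_{x}X^{\perp_{\pi}})^{*}$ and its dual. Unwinding the three operations: $(v,i^{*}\alpha)\in i^{*}L_{\pi}$ holds iff $v=\pi^{\sharp}_{x}(\alpha)\in T_{x}X$, which by the exact sequence \eqref{sequence} forces $\alpha\in(T_{x}X^{\perp_{\pi}})^{0}$; pulling back along $pr$ then produces the pairs $\big((v,\dot{\xi}),(i^{*}\alpha,0)\big)$ with $\dot{\xi}\in(T_{x}X^{\perp_{\pi}})^{*}$ arbitrary; and gauging by $\eta$ adds $\iota_{w}\eta_{x}$ to the covector component, with $\eta_{x}$ given by the prescribed restriction \eqref{restr}. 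So a vector $w=(v,\dot{\xi})$ lies in $L_{x}\cap T_{x}\big((TX^{\perp_{\pi}})^{*}\big)$ precisely when $\big(w,-\iota_{w}\eta_{x}\big)\in pr^{*}(i^{*}L_{\pi})_{x}$. Spelling this out with the formulas \eqref{st} for $\sigma$ and $\tau$, and using that the inclusion $j$ has image the annihilator $W^{0}$, I would reduce the condition to two equations on $\beta:=\alpha+j(\dot{\xi})\in T_{x}^{*}M$ (where $v=\pi^{\sharp}_{x}(\alpha)$): matching the $T^{*}X$-components gives $i^{*}\beta=0$, and matching the $TX^{\perp_{\pi}}$-components gives $\pi^{\sharp}_{x}(\beta)\in W$.

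The remaining step is pure linear algebra. From $\beta\in T_{x}X^{0}$ we get $\pi^{\sharp}_{x}(\beta)\in\pi^{\sharp}_{x}(T_{x}X^{0})=T_{x}X^{\perp_{\pi}}$, and together with $\pi^{\sharp}_{x}(\beta)\in W$ and $W\cap T_{x}X^{\perp_{\pi}}=\{0\}$ this forces $\pi^{\sharp}_{x}(\beta)=0$. Hence $v=\pi^{\sharp}_{x}(\alpha)=-\pi^{\sharp}_{x}\big(j(\dot{\xi})\big)\in T_{x}X$, so by \eqref{sequence} we have $j(\dot{\xi})\in(\pi^{\sharp}_{x})^{-1}(T_{x}X)=(T_{x}X^{\perp_{\pi}})^{0}$; but $j$ splits the projection $TM|_{X}\to TX^{\perp_{\pi}}$, so $j(\dot{\xi})|_{T_{x}X^{\perp_{\pi}}}=\dot{\xi}$, whence $\dot{\xi}=0$ and then $v=0$. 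This establishes $L_{x}\cap T_{x}\big((TX^{\perp_{\pi}})^{*}\big)=\{0\}$ for all $x\in X$, and by the openness observed at the outset, $L$ is Poisson on a neighborhood $U$ of $X$.

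I expect the part requiring most care to be the bookkeeping in the middle step rather than anything conceptual: keeping the identification $T_{x}\big((TX^{\perp_{\pi}})^{*}\big)=T_{x}X\oplus(T_{x}X^{\perp_{\pi}})^{*}$, the inclusion $j$, the tensors $\sigma,\tau$, and all the relevant dual maps mutually compatible, so that the two matching equations emerge with the correct signs. Once that setup is fixed, the conclusion is immediate from the complementarity $W\oplus TX^{\perp_{\pi}}=TM|_{X}$ and the exact sequence \eqref{sequence}; indeed the restriction $\eta|_{X}=-\sigma-\tau$ in \eqref{restr} is designed precisely so that these cancellations occur.
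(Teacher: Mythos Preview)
Your proposal is correct and follows essentially the same route as the paper's proof: both arguments check transversality of the gauged pullback Dirac structure to the tangent bundle along the zero section, introduce the covector $\beta=\alpha+j(\dot\xi)$, derive the two conditions $i^{*}\beta=0$ and $\pi^{\sharp}(\beta)\in W$ by pairing against $TX$ and $(TX^{\perp_{\pi}})^{*}$ respectively, and conclude via $W\cap TX^{\perp_{\pi}}=\{0\}$ together with the exact sequence \eqref{sequence}. The only cosmetic difference is that the paper phrases the final step as $j(\dot\xi)\in W^{0}\cap(TX^{\perp_{\pi}})^{0}=\{0\}$, whereas you use that $j$ splits the quotient map; these are equivalent.
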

	
	\begin{proof}
		It suffices to show that $\left(\mathrm{pr}^{!}(i^{!}L_{\pi})\right)^{\eta}$ is transverse to $T\big(TX^{\perp_{\pi}}\big)^{*}$ along $X$. By the sequence \eqref{sequence}, we have $i^{!}L_{\pi}=\big\{\pi^{\sharp}(\alpha)+(\mathrm{d}i)^{*}\alpha:\ \alpha\in(TX^{\perp_{\pi}})^{\circ}\big\}$, and therefore
		\[
		\left.\left(\mathrm{pr}^{!}(i^{!}L_{\pi})\right)^{\eta}\right|_{X}=\big\{\pi^{\sharp}(\alpha)+\xi+(\mathrm{d}\mathrm{pr})^{*}((\mathrm{d}i)^{*}\alpha)+\iota_{\pi^{\sharp}(\alpha)+\xi}\eta:\ \alpha\in(TX^{\perp_{\pi}})^{\circ}, \xi\in(TX^{\perp_{\pi}})^{*} \big\}.
		\]
		Assume that $\pi^{\sharp}(\alpha)+\xi\in T(TX^{\perp_{\pi}})^{*}|_{X}\cap\left(\mathrm{pr}^{!}(i^{!}L_{\pi})\right)^{\eta}|_{X}$ for $\alpha\in(TX^{\perp_{\pi}})^{\circ}, \xi\in(TX^{\perp_{\pi}})^{*}$. Then $(\mathrm{d}\mathrm{pr})^{*}((\mathrm{d}i)^{*}\alpha)+\iota_{\pi^{\sharp}(\alpha)+\xi}\eta=0$, which implies the following:
		\begin{itemize}
			\item For all $v\in TX$, we get
			\[
			\alpha(v)+\eta\big(\pi^{\sharp}(\alpha)+\xi,v\big)=0\Rightarrow \alpha(v)+\langle j(\xi),v\rangle=0.
			\]
			So $\alpha+j(\xi)\in TX^{\circ}$, and therefore $\pi^{\sharp}(\alpha+j(\xi))\in TX^{\perp_{\pi}}$.
			\item For all $\beta\in(TX^{\perp_{\pi}})^{*}$, we get
			\begin{align*}
				\eta\big(\pi^{\sharp}(\alpha)+\xi,\beta\big)=0&\Rightarrow \pi\big(j(\xi),j(\beta)\big)+\langle \pi^{\sharp}(\alpha),j(\beta)\rangle=0\\
				&\Rightarrow \big\langle \pi^{\sharp}(\alpha+j(\xi)),j(\beta)\big\rangle=0.
			\end{align*}
			Since $j\big((TX^{\perp})^{*}\big)=W^{\circ}$, this shows that $\pi^{\sharp}(\alpha+j(\xi))$ lies in $W$. 
		\end{itemize}
		We now proved that $\pi^{\sharp}(\alpha+j(\xi))\in TX^{\perp_{\pi}}\cap W=\{0\}$. So $\pi^{\sharp}\big(j(\xi)\big)=-\pi^{\sharp}(\alpha)\in TX$, which implies that $j(\xi)\in(TX^{\perp_{\pi}})^{\circ}$, again using exactness of the sequence \eqref{sequence}. But then $j(\xi)\in W^{\circ}\cap(TX^{\perp_{\pi}})^{\circ}=\{0\}$, so that $\xi=0$, which in turn implies that also $\pi^{\sharp}(\alpha)=0$.
	\end{proof}
	
	We denote the Poisson manifold from Proposition \ref{model} by $\big(U,\pi(W,\eta)\big)$, and we refer to it as the \textbf{local model corresponding with} $\mathbf{W}$ \textbf{and} $\boldsymbol{\eta}$. A priori, the construction depends on a choice of complement $W$ and a choice of closed extension $\eta$. We now show that different choices produce isomorphic local models.

	\begin{prop}\label{independent}
		Any two local models for the local Poisson saturation of a coregular submanifold $X\subset (M,\pi)$ are isomorphic around $X$, through a diffeomorphism that restricts to the identity along $X$.
	\end{prop}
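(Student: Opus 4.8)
The plan is to reduce the statement to a Moser-type argument after putting the two local models in a common framework. The key observation is that both $\pi(W_0,\eta_0)$ and $\pi(W_1,\eta_1)$ arise from the \emph{same} pullback Dirac structure $pr^*(i^*L_\pi)$ on $(TX^{\perp_\pi})^*$, differing only by the gauge transformations $\eta_0$ and $\eta_1$, and that the complements $W_0, W_1$ enter only through the prescribed restriction $\eta_k|_X = -\sigma_k - \tau_k$, where $\sigma_k,\tau_k$ are built from $W_k$ via \eqref{st}. So I would organize the argument in two stages.

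First I would reduce to the case $W_0 = W_1$. Given two complements, I would construct a diffeomorphism $\Phi$ of a neighborhood of $X$ in $(TX^{\perp_\pi})^*$, restricting to the identity along $X$, that intertwines the two models. The natural candidate is the fiberwise-linear automorphism of $(TX^{\perp_\pi})^*$ induced by comparing the two inclusions $j_0, j_1 : (TX^{\perp_\pi})^* \hookrightarrow T^*M|_X$ (they differ by a bundle map valued in $W_1^0 \cap \ldots$, or rather $j_1 - j_0$ takes values in $TX^0$ composed appropriately); alternatively, one checks directly that $\Phi^* \big(pr^*(i^*L_\pi)\big)^{\eta_1}$ is again of the form $\big(pr^*(i^*L_\pi)\big)^{\eta'}$ for some closed extension $\eta'$ of $-\sigma_0 - \tau_0$, using that $pr^*(i^*L_\pi)$ is invariant under fiberwise-linear diffeomorphisms covering $\mathrm{id}_X$ and that gauge transformations transform naturally under pullback: $\Phi^*(L^{\eta}) = (\Phi^* L)^{\Phi^*\eta}$. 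This step is essentially bookkeeping with the definitions \eqref{st} and the exact sequence \eqref{sequence}.

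Second, with $W := W_0 = W_1$ fixed, both $\eta_0$ and $\eta_1$ lie in $\mathcal{E}_W(-\sigma-\tau)$, so $\eta_1 - \eta_0$ is a closed one-form on a neighborhood of $X$ vanishing along the zero section $X$. By the relative Poincaré lemma I can write $\eta_1 - \eta_0 = d\beta$ with $\beta$ a one-form vanishing to first order along $X$; rescaling via the fiberwise homothety $m_t$ and the standard homotopy operator, $\beta$ can be taken to vanish along $X$ as well (this is exactly the type of primitive produced in the normal-form arguments of \cite{transversals}). Then the family $\eta_s := \eta_0 + s\,d\beta = \eta_0 + d(s\beta)$, $s \in [0,1]$, interpolates between the two, all $\eta_s$ restrict to $-\sigma-\tau$ along $X$, and hence (shrinking $U$ uniformly in $s$, using compactness of $[0,1]$ and Proposition \ref{model}) each $\big(pr^*(i^*L_\pi)\big)^{\eta_s}$ is Poisson near $X$. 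A Moser argument now applies: one seeks a time-dependent vector field $Y_s$, vanishing along $X$, whose flow $\psi_s$ satisfies $\psi_s^*\big(pr^*(i^*L_\pi)\big)^{\eta_s} = \big(pr^*(i^*L_\pi)\big)^{\eta_0}$; differentiating in $s$ this becomes the equation $\iota_{Y_s} \Omega_s = -\beta$ (at the Dirac level, $\mathcal{L}_{Y_s} L_s^{\eta_s} + \text{(gauge by } d\beta) = 0$), which is solvable since the underlying presymplectic/Dirac structure is a Poisson structure, i.e. transverse to $T(TX^{\perp_\pi})^*$, so the relevant "symplectic" pairing restricted appropriately is invertible. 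Because $\beta$ vanishes along $X$, so does $Y_s$, hence $\psi_1$ fixes $X$ pointwise, and $\psi_1$ is the desired Poisson diffeomorphism.

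The main obstacle is the first stage — producing the diffeomorphism absorbing the change of complement $W_0 \rightsquigarrow W_1$ and verifying carefully that it carries one model to another of the same type, since the complement is entangled both with the inclusion $j$ defining the pullback-and-gauge construction and with the boundary value $-\sigma-\tau$. Once that is isolated, the second stage is the routine Moser/homotopy argument familiar from \cite{transversals} and \cite{weinstein}, with the only care being the uniform shrinking of the neighborhood along the path $\eta_s$, which is handled exactly as in Proposition \ref{model}.
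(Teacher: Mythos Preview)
Your second stage is correct and is exactly the paper's final step: once the complement is fixed, two closed extensions of the same $-\sigma-\tau$ differ by $d\beta$ with $\beta|_X=0$ (relative Poincar\'e lemma), and the Moser flow of $-\pi_t^\sharp(\beta)$ gives the isomorphism fixing $X$.

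The first stage, however, has a genuine gap. A fiberwise-linear automorphism $\Phi$ of $(TX^{\perp_\pi})^*$ covering $\text{Id}_X$ has $d\Phi|_X\colon(v,\xi)\mapsto(v,B\xi)$ for some bundle automorphism $B$, so
\[
(\Phi^*\eta_1)|_X\big((v_1,\xi_1),(v_2,\xi_2)\big)=-\sigma_1(B\xi_1,B\xi_2)-\tau_1\big((v_1,B\xi_1),(v_2,B\xi_2)\big).
\]
For the $T^*X\otimes TX^{\perp_\pi}$-component to equal $-\tau_0$ you would need $\langle v,j_1(B\xi)\rangle=\langle v,j_0(\xi)\rangle$ for all $v\in TX$, i.e.\ $j_1\circ B-j_0$ takes values in $TX^0$. (Note that $j_1-j_0$ takes values in $(TX^{\perp_\pi})^0$, not $TX^0$ as you suggest.) No such $B$ exists in general: for $X$ a Poisson transversal with the canonical complement $W_0=TX$ one has $\text{Im}(j_0)=TX^0$ and hence $\tau_0=0$; for any $W_1\neq TX$ the requirement becomes $W_1^0=\text{Im}(j_1\circ B)\subset TX^0$, which forces $W_1=TX$. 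Allowing the more general differential $(v,\xi)\mapsto(v+C\xi,B\xi)$ of a diffeomorphism fixing $X$ does not touch the $\tau$-component, and dropping the condition that $\Phi$ cover $\text{Id}_X$ destroys the invariance $\Phi^*\big(pr^*(i^*L_\pi)\big)=pr^*(i^*L_\pi)$ on which your reduction rests. So the ``bookkeeping'' you anticipate cannot be carried out.

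The paper resolves this by making the first stage a Moser argument as well. One interpolates linearly between the complements, $W_t:=\text{Graph}(tA)$ where $A\in\Gamma\big(\text{Hom}(W_0,TX^{\perp_\pi})\big)$ satisfies $W_1=\text{Graph}(A)$, computes $\sigma_t,\tau_t$ explicitly as polynomials in $t$, and then produces one-forms $\beta_1,\beta_2,\beta_3$ with $\beta_i|_X=0$ such that $\eta_0+t\,d\beta_1+t\,d\beta_2-t^2\,d\beta_3\in\mathcal{E}_{W_t}(-\sigma_t-\tau_t)$ for every $t\in[0,1]$. The path of Poisson structures $\pi_t$ built from these data is related to $\pi_0$ by an exact gauge with primitive vanishing on $X$, so precisely the Moser theorem you invoke in stage~2 applies and yields a diffeomorphism fixing $X$ that carries $\pi(W_0,\eta_0)$ to a model with complement $W_1$. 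The resulting diffeomorphism is \emph{not} a bundle map covering $\text{Id}_X$; absorbing the change of complement genuinely forces one outside that class.
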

	\begin{proof}
		Let $\big(U,\pi(W_{0},\eta_{0})\big)$ and $\big(V,\pi(W_{1},\eta_{1})\big)$ be two local models for the local Poisson saturation of $X$. The idea of the proof is to construct a diffeomorphism between them in two stages, where each stage relies on a Moser argument. We first map the local model $\big(U,\pi(W_{0},\eta_{0})\big)$ to an intermediate local model  $\big(V',\pi(W_{1},\eta'_{1})\big)$, which is defined in terms of the complement $W_{1}$. Then we pull $\big(V',\pi(W_{1},\eta'_{1})\big)$ to the second local model $\big(V,\pi(W_{1},\eta_{1})\big)$. Throughout, we shrink the neighborhoods on which the models are defined, when necessary.

		We interpolate smoothly between the complements $W_{0},W_{1}$ to $TX^{\perp_{\pi}}$ in $TM|_{X}$, as follows.
		Decomposing $W_{1}$ in the direct sum $TM|_{X}=TX^{\perp_{\pi}}\oplus W_{0}$, we find $A\in\Gamma\big(\text{Hom}(W_{0},TX^{\perp_{\pi}})\big)$ such that $W_{1}=\text{Graph}(A)$. If we define $W_{t}:=\text{Graph}(tA)$ for $t\in[0,1]$, then the family $\{W_{t}\}_{t\in[0,1]}$ consists of complements to $TX^{\perp_{\pi}}$, i.e. $TM|_{X}=TX^{\perp_{\pi}}\oplus W_{t}$, and it interpolates between $W_{0}$ and $W_{1}$. Denote by $q_{t}:TM|_{X}\twoheadrightarrow TX^{\perp_{\pi}}$ and $j_{t}:\big(TX^{\perp_{\pi}}\big)^{*}\hookrightarrow T^{*}M|_{X}$ the projection  and inclusion, respectively, induced by the complement $W_{t}$. We first determine the bilinear forms $\sigma_{t}\in\Gamma(\wedge^{2}TX^{\perp_{\pi}})$ and $\tau_{t}\in\Gamma(T^{*}X\otimes TX^{\perp_{\pi}})$, which are defined by the formulas \eqref{st} using the inclusion $j_{t}$, in terms of $\sigma_{0}$ and $\tau_{0}$.
		
		\vspace{0.2cm}
		\noindent
		\underline{Step 1:} We compute $\sigma_{t}$ and $\tau_{t}$.
		
		\vspace{0.1cm}
		\noindent
		For $e+w\in TX^{\perp_{\pi}}\oplus W_{0}=TM|_{X}$, we have
		\begin{align*}
			q_{t}(e+w)&=q_{t}\big(e-tA(w)+w+tA(w)\big)\\
			&=e-tA(w)\\
			&=q_{0}(e+w)-tA(\mathrm{Id}-q_{0})(e+w).
		\end{align*}
		This shows that $q_{t}=q_{0}-tA(\mathrm{Id}-q_{0})$ and therefore $j_{t}=j_{0}-t(\mathrm{Id}-j_{0})A^{*}$. We now compute for $v_{1},v_{2}\in T_{x}X$ and $\xi_{1},\xi_{2}\in (T_{x}X^{\perp_{\pi}})^{*}$: 
		\begin{align}\label{tau}
			\tau_{t}\big((v_{1},\xi_{1}),(v_{2},\xi_{2})\big)&=\langle v_{1},j_{t}(\xi_{2})\rangle - \langle v_{2},j_{t}(\xi_{1})\rangle\nonumber\\
			&=\langle v_{1},j_{0}(\xi_{2})\rangle -t\langle v_{1},(\mathrm{Id}-j_{0})A^{*}\xi_{2}\rangle\nonumber\\
			&\hspace{1cm} - \langle v_{2},j_{0}(\xi_{1})\rangle +t\langle v_{2},(\mathrm{Id}-j_{0})A^{*}\xi_{1}\rangle\nonumber\\
			&=\tau_{0}\big((v_{1},\xi_{1}),(v_{2},\xi_{2})\big)+t\langle A(\mathrm{Id}-q_{0})v_{2},\xi_{1}\rangle-t\langle A(\mathrm{Id}-q_{0})v_{1},\xi_{2}\rangle.
		\end{align}
		Similarly, we obtain
		\begin{align}\label{sigma}
			\sigma_{t}(\xi_{1},\xi_{2})&=\langle q_{t}(\pi^{\sharp}(j_{t}(\xi_{1}))),\xi_{2}\rangle\nonumber\\
			&=\left\langle \left[\big(q_{0}-tA(\mathrm{Id}-q_{0})\big)\pi^{\sharp}\big(j_{0}-t(\mathrm{Id}-j_{0})A^{*}\big)\right](\xi_{1}),\xi_{2}\right\rangle\nonumber\\
			&=\sigma_{0}(\xi_{1},\xi_{2})-t\left\langle \big(q_{0}\pi^{\sharp}(\mathrm{Id}-j_{0})A^{*}\big)(\xi_{1}),\xi_{2}\right\rangle\nonumber\\
			&\hspace{0.5cm}-t\left\langle \big(A(\mathrm{Id}-q_{0})\pi^{\sharp}j_{0}\big)(\xi_{1}),\xi_{2}\right\rangle+t^{2}\left\langle \big(A(\mathrm{Id}-q_{0})\pi^{\sharp}(\mathrm{Id}-j_{0})A^{*}\big)(\xi_{1}),\xi_{2} \right\rangle.
		\end{align}
		
		\vspace{0.2cm}
		\noindent
		\underline{Step 2:} Get closed extensions, smoothly varying in $t\in[0,1]$, of  \[-\sigma_{t}\oplus -\tau_{t}\oplus 0\in\Gamma(\wedge^{2}TX^{\perp_{\pi}})\oplus\Gamma(T^{*}X\otimes TX^{\perp_{\pi}})\oplus\Gamma(\wedge^{2}T^{*}X).\]
		
		\vspace{0.1cm}
		\noindent
		Thanks to \cite[Extension Theorem]{weinstein} and \cite[Relative Poincar\'e Lemma]{weinstein}, we find a one-form $\beta_{1}$, defined on a neighborhood of $X\subset(TX^{\perp_{\pi}})^{*}$, such that
		\[
		\begin{cases}
			\beta_{1}|_{X}=0,\\
			\mathrm{d}\beta_{1}|_{X}\in \Gamma(T^{*}X\otimes TX^{\perp_{\pi}}),\\
			\mathrm{d}\beta_{1}|_{X}\left((v_{1},\xi_{1}),(v_{2},\xi_{2})\right)=\langle A(\mathrm{Id}-q_{0})v_{1},\xi_{2}\rangle-\langle A(\mathrm{Id}-q_{0})v_{2},\xi_{1}\rangle,
		\end{cases}
		\]
		for $(v_{1},\xi_{1}),(v_{2},\xi_{2})\in T_{x}X\oplus \big(T_{x}X^{\perp_{\pi}}\big)^{*}$.
		Similarly, we find one-forms $\beta_{2},\beta_{3}$ defined around $X\subset(TX^{\perp_{\pi}})^{*}$ satisfying
		\[
		\begin{cases}
			\beta_{2}|_{X}=0,\\
			\mathrm{d}\beta_{2}|_{X}\in\Gamma(\wedge^{2}TX^{\perp_{\pi}}),\\
			\mathrm{d}\beta_{2}|_{X}(\xi_{1},\xi_{2})=\left\langle \big(q_{0}\pi^{\sharp}(\mathrm{Id}-j_{0})A^{*}\big)(\xi_{1}),\xi_{2}\right\rangle+\left\langle \big(A(\mathrm{Id}-q_{0})\pi^{\sharp}j_{0}\big)(\xi_{1}),\xi_{2}\right\rangle,
		\end{cases}
		\]
		and
		\[
		\begin{cases}
			\beta_{3}|_{X}=0,\\
			\mathrm{d}\beta_{3}|_{X}\in\Gamma(\wedge^{2}TX^{\perp_{\pi}}),\\
			\mathrm{d}\beta_{3}|_{X}(\xi_{1},\xi_{2})=\left\langle \big(A(\mathrm{Id}-q_{0})\pi^{\sharp}(\mathrm{Id}-j_{0})A^{*}\big)(\xi_{1}),\xi_{2} \right\rangle,
		\end{cases}
		\]
		for $\xi_{1},\xi_{2}\in\big(T_{x}X^{\perp_{\pi}}\big)^{*}$. Using \eqref{tau} and \eqref{sigma}, we see that
		\begin{equation}\label{extension}
			\big(\eta_{0}+t\mathrm{d}\beta_{1}+t\mathrm{d}\beta_{2}-t^{2}\mathrm{d}\beta_{3}\big)|_{X}=-\sigma_{t}\oplus -\tau_{t}\oplus 0.
		\end{equation}
		
		\vspace{0.2cm}
		\noindent
		\underline{Step 3:} A Moser argument pulls $\big(U,\pi(W_{0},\eta_{0})\big)$ to $\big(V',\pi(W_{1},\eta_{0}+\mathrm{d}\beta_{1}+\mathrm{d}\beta_{2}-\mathrm{d}\beta_{3})\big)$.
		
		\vspace{0.1cm}
		\noindent
		By Proposition \ref{model}, we get a path of Dirac structures 
		\[
		\pi_{t}:=\big(\mathrm{pr}^{!}(i^{!}L_{\pi})\big)^{\eta_{0}+t\mathrm{d}\beta_{1}+t\mathrm{d}\beta_{2}-t^{2}\mathrm{d}\beta_{3}}
		\]
		for $t\in[0,1]$, where $\pi_{t}$ is Poisson on a neighborhood $U_{t}$ of $X$ in $\big(TX^{\perp_{\pi}}\big)^{*}$. Note that the set $\bigcup_{t\in[0,1]}\{t\}\times U_{t}$ is open, since it consists of the points $(t,x)$ for which $(\pi_{t})_{x}$ is Poisson. The Tube Lemma implies that $U':=\bigcap_{t\in[0,1]}U_{t}$ is a neighborhood of $X$ on which $\pi_{t}$ is Poisson for all $t\in[0,1]$. Now, these Poisson structures are related by gauge transformations:
		\[
		\pi_{t}=\pi_{0}^{t\mathrm{d}\beta_{1}+t\mathrm{d}\beta_{2}-t^{2}\mathrm{d}\beta_{3}},
		\] 
		where
		\[
		\frac{\mathrm{d}}{\mathrm{d}t}(t\mathrm{d}\beta_{1}+t\mathrm{d}\beta_{2}-t^{2}\mathrm{d}\beta_{3})=-\mathrm{d}(2t\beta_{3}-\beta_{2}-\beta_{1}).
		\]
		A Poisson version of Moser's theorem (e.g. \cite[Theorem 2.11]{meinrenken}) shows that the flow $\Phi_{t}$ of the time-dependent vector field $\pi_{t}^{\sharp}(2t\beta_{3}-\beta_{2}-\beta_{1})$ satisfies $(\Phi_{t})_{*}\pi_{t}=\pi_{0}$, whenever it is defined. Moreover, since the primitive $2t\beta_{3}-\beta_{2}-\beta_{1}$ vanishes along $X$, the flow $\Phi_{t}$ fixes all points in $X$. Now set $\phi:=\Phi_{1}^{-1}$. Shrinking $U$ if necessary, we can assume that $\phi:U\rightarrow V'$ where $V':=\phi(U)$. We then have 
		\[
		\phi:\big(U,\pi(W_{0},\eta_{0})\big)\overset{\sim}{\rightarrow}\big(V',\pi(W_{1},\eta_{0}+\mathrm{d}\beta_{1}+\mathrm{d}\beta_{2}-\mathrm{d}\beta_{3})\big),\hspace{1cm}\phi|_{X}=\text{Id}.
		\]
		
		\vspace{0.2cm}
		\noindent
		\underline{Step 4:} Another Moser argument pulls $\big(V',\pi(W_{1},\eta_{0}+\mathrm{d}\beta_{1}+\mathrm{d}\beta_{2}-\mathrm{d}\beta_{3})\big)$ to $\big(V,\pi(W_{1},\eta_{1})\big)$.
		
		\vspace{0.1cm}
		\noindent
		Both $\eta_{1}$ and $\eta_{0}+\mathrm{d}\beta_{1}+\mathrm{d}\beta_{2}-\mathrm{d}\beta_{3}$ are closed extensions of
		\[
		-\sigma_{1}\oplus -\tau_{1}\oplus 0\in\Gamma(\wedge^{2}TX^{\perp_{\pi}})\oplus\Gamma(T^{*}X\otimes TX^{\perp_{\pi}})\oplus\Gamma(\wedge^{2}T^{*}X),
		\]
		see equation \eqref{extension}. So their difference $\eta_{1}-(\eta_{0}+\mathrm{d}\beta_{1}+\mathrm{d}\beta_{2}-\mathrm{d}\beta_{3})$ is exact around $X$ with a primitive $\gamma$ that vanishes along $X$, by the Relative Poincar\'e Lemma. Denote
		\[
		\pi_{0}':=\big(\mathrm{pr}^{!}(i^{!}L_{\pi})\big)^{\eta_{0}+\mathrm{d}\beta_{1}+\mathrm{d}\beta_{2}-\mathrm{d}\beta_{3}},\hspace{1cm}\pi_{t}':=\left(\pi_{0}'\right)^{t\mathrm{d}\gamma},
		\]
		for $t\in[0,1]$. Since $\pi_{0}'$ is Poisson on $V'$ and $\mathrm{d}\gamma|_{X}=0$, we see that $\pi_{t}'$ is Poisson on a neighborhood $V'_{t}$ of $X$ in $(TX^{\perp_{\pi}})^{*}$. Using the Tube Lemma as in Step 3, we find a neighborhood $O$ of $X$ in $(TX^{\perp_{\pi}})^{*}$ such that $\pi_{t}'$ is Poisson on $O$ for all $t\in[0,1]$. The Moser Theorem \cite[Theorem 2.11]{meinrenken} implies that the flow $\Psi_{t}$ of the time-dependent vector field $-(\pi_{t}')^{\sharp}(\gamma)$ satisfies $(\Psi_{t})_{*}\pi_{t}'=\pi_{0}'$, whenever it is defined. Moreover, since $\gamma|_{X}=0$, the flow $\Psi_{t}$ fixes all points of $X$. Now set $\psi:=\Psi_{1}^{-1}$. Shrinking both $V'$ and $V$ if necessary, we can assume that $\psi:V'\rightarrow V$. We then have
		\[
		\psi:\big(V',\pi(W_{1},\eta_{0}+\mathrm{d}\beta_{1}+\mathrm{d}\beta_{2}-\mathrm{d}\beta_{3})\big)\overset{\sim}{\rightarrow}\big(V,\pi(W_{1},\eta_{1})\big),\hspace{1cm}\psi|_{X}=\text{Id}.
		\] 
		The diffeomorphism $\psi\circ\phi$ now satisfies the criteria: it fixes points in $X$ and defines a Poisson diffeomorphism
		\[
		\psi\circ\phi:\big(U,\pi(W_{0},\eta_{0})\big)\overset{\sim}{\rightarrow}\big(V,\pi(W_{1},\eta_{1})\big). \qedhere
		\]
	\end{proof}
	
	It is now justified to call $\big(U,\pi(W,\eta)\big)$ \textbf{the local model} for the local Poisson saturation of the coregular submanifold $X\subset (M,\pi)$.

	\section{The normal form}\label{sec:third}
	We now show that the local Poisson saturation of a coregular submanifold $X\subset (M,\pi)$ is isomorphic around $X$ to the local model $\big(U,\pi(W,\eta)\big)$ constructed in Proposition \ref{model}. 
	We will use the theory of dual pairs in Dirac geometry, as developed in \cite{dirac}. We first need a lemma, which describes how to obtain a weak Dirac dual pair out of the self-dual pair \eqref{pair}
	\[
	\begin{tikzcd}
		(M,\pi)&(\Sigma,\Omega_{\chi})\arrow{r}{\exp_{\chi}}\arrow[l,"\mathrm{pr}",swap]&(M,-\pi)
	\end{tikzcd}
	\]
	whenever a coregular submanifold $X\subset (M,\pi)$ is given. Recall from the proof of Theorem \ref{satsmooth} that the local Poisson saturation $(P,\pi_{P})$ of $X\subset (M,\pi)$ is given by $\exp_{\chi}(\Sigma|_{X})$.
	
	\begin{lemma}\label{diracpair}
		Let $i:X\hookrightarrow(M,\pi)$ be a coregular submanifold with local Poisson saturation $(P,\pi_{P})$. Then the following is a weak Dirac dual pair, in the sense of \cite{dirac}:
		\[
		\begin{tikzcd}
			(X,i^{!}L_{\pi})&\big(\Sigma|_{X},\text{Gr}(\Omega_{\chi}|_{X})\big)\arrow{r}{\exp_{\chi}}\arrow[l,"\mathrm{pr}",swap]&(P,-L_{\pi_{P}}).
		\end{tikzcd}
		\]  
		This means that $\Omega_{\chi}|_{X}$ is a closed two-form on $\Sigma|_{X}$, that $\mathrm{pr}$ and $\exp_{\chi}$ are surjective forward Dirac submersions, and that
		\begin{align}
			&\big(\Omega_{\chi}|_{X}\big)(S_{1},S_{2})=0,\label{property1}\\
			&rk(S_{1}\cap K\cap S_{2})=\dim \Sigma|_{X}-\dim X-\dim P,\label{property2}
		\end{align}
		where $S_{1}:=\ker \mathrm{d}\mathrm{pr},S_{2}:=\ker \mathrm{d}\exp_{\chi}$ and $K:=\ker\big(\Omega_{\chi}|_{X}\big)$.
	\end{lemma}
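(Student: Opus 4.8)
The plan is to verify the three conditions in the definition of a weak Dirac dual pair one at a time, using the self-dual pair \eqref{pair}, Lemma \ref{realization}, and the description of $(P,\pi_{P})$ obtained in the proof of Theorem \ref{satsmooth}. That both legs restrict to \emph{surjective submersions} is quick: $pr\colon\Sigma\to M$ is a submersion transverse to $X$, so $pr\colon\Sigma|_{X}=pr^{-1}(X)\to X$ is a submersion, and it is onto because $\Sigma$ contains the zero section; for $\exp_{\chi}$ one invokes Step 2 of the proof of Theorem \ref{satsmooth}, where it is shown that $\exp_{\chi}|_{\Sigma|_{X}}$ has constant rank $\dim X+rk(TX^{\perp_{\pi}})=\dim P$ with image exactly $P$, hence is a surjective submersion onto $P$.

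The heart of the argument is the \emph{forward Dirac} property, which is also the step I expect to take the most care. Both legs of \eqref{pair} are Poisson maps, i.e.\ $pr\colon(\Sigma,\text{Gr}(\Omega_{\chi}))\to(M,L_{\pi})$ and $\exp_{\chi}\colon(\Sigma,\text{Gr}(\Omega_{\chi}))\to(M,-L_{\pi})$ are forward Dirac maps. Restricting the source along the submanifold $\Sigma|_{X}$ turns $\text{Gr}(\Omega_{\chi})$ into its backward pullback $\text{Gr}(\Omega_{\chi}|_{X})$, which is automatically smooth; restricting the target along $X$ (respectively along the Poisson submanifold $P$) turns $L_{\pi}$ into $i^{*}L_{\pi}$ (respectively $-L_{\pi}$ into $-L_{\pi_{P}}$), and these pullback Dirac structures are smooth --- the first by regularity of $X$ (observation b) above), the second because $P$ is a Poisson submanifold. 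One then applies the standard fact that a forward Dirac map restricts to a forward Dirac map along such clean sub-data; here $pr$ is a submersion transverse to $X$, and $\exp_{\chi}|_{\Sigma|_{X}}$ is a submersion onto $P$. The delicate point is the asymmetric situation of $\exp_{\chi}$, where source \emph{and} target are cut down and $\exp_{\chi}$ is a submersion only onto $P$, not onto $M$; if a ready-made restriction lemma from \cite{bursztyn} or \cite{dirac} is not convenient, this can be checked directly by computing the forward-image Dirac structure of each leg and comparing it with the explicit description $i^{*}L_{\pi}=\{\pi^{\sharp}(\alpha)+i^{*}\alpha:\alpha\in(TX^{\perp_{\pi}})^{0}\}$ coming from the exact sequence \eqref{sequence}.

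It remains to check \eqref{property1} and \eqref{property2}. Write $S_{1}=\ker dpr$, $S_{2}=\ker d\exp_{\chi}\cap T\Sigma|_{X}$ and $K=\ker(\Omega_{\chi}|_{X})$. Property \eqref{property1} is immediate from \eqref{pair}: since $S_{2}\subseteq\ker d\exp_{\chi}=(\ker dpr)^{\perp_{\Omega_{\chi}}}$, we have $\Omega_{\chi}(u,w)=0$ for $u\in S_{1},w\in S_{2}$, and as $u,w\in T\Sigma|_{X}$ this says $(\Omega_{\chi}|_{X})(u,w)=0$. For \eqref{property2} I would first observe $S_{1}\cap K\subseteq S_{2}$: if $v\in S_{1}$ lies in $K$ then $\Omega_{\chi}(v,\cdot)$ vanishes on $\ker dpr\subseteq T\Sigma|_{X}$, so $v\in(\ker dpr)^{\perp_{\Omega_{\chi}}}=\ker d\exp_{\chi}$, hence $v\in S_{2}$. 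Thus $S_{1}\cap K\cap S_{2}=S_{1}\cap K=\ker dpr\cap(T\Sigma|_{X})^{\perp_{\Omega_{\chi}}}$. Now $(T\Sigma|_{X})^{0}=(dpr)^{*}(TX^{0})$ because $dpr$ is surjective, so a vector in this intersection is of the form $v=\pi_{\chi}^{\sharp}\big((dpr)^{*}\beta\big)$ with $\beta\in T_{pr(\xi)}X^{0}$, and the condition $v\in\ker dpr$ becomes, via the identity $dpr\circ\pi_{\chi}^{\sharp}\circ(dpr)^{*}=\pi^{\sharp}$ expressing that $pr$ is a Poisson map, exactly $\pi^{\sharp}(\beta)=0$. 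Hence $S_{1}\cap K\cap S_{2}$ is isomorphic to $\big(\ker\pi^{\sharp}\cap TX^{0}\big)_{pr(\xi)}$, whose dimension by the exact sequence \eqref{sequence} equals $(\dim M-\dim X)-rk(TX^{\perp_{\pi}})=\dim\Sigma|_{X}-\dim X-\dim P$, where we used $\dim\Sigma|_{X}=\dim M+\dim X$ and $\dim P=\dim X+rk(TX^{\perp_{\pi}})$ (the latter from \eqref{normal}). This establishes \eqref{property2} and completes the proof.
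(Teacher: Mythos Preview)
Your proposal is correct and follows essentially the same route as the paper: surjectivity of the submersions comes from Theorem \ref{satsmooth}, the forward Dirac property is obtained by restricting the legs of the self-dual pair \eqref{pair} via commutative squares (the paper invokes \cite[Lemma 3]{maps} for this), property \eqref{property1} is inherited from \eqref{pair}, and for \eqref{property2} both arguments identify $S_{1}\cap K\cap S_{2}$ with $\pi_{\chi}^{\sharp}\big((dpr)^{*}(\ker\pi^{\sharp}\cap TX^{0})\big)$ and count dimensions. Your observation that $S_{1}\cap K\subseteq S_{2}$ (so that one need only compute $S_{1}\cap K$) is a mild streamlining of the paper's computation, which instead writes out $S_{2}$ explicitly via equation \eqref{T} before intersecting.
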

	\begin{proof}
		It is clear that $\mathrm{pr}$ is a surjective submersion. The fact that $\exp_{\chi}$ is a surjective submersion follows from the proof of Theorem \ref{satsmooth}. Because \eqref{pair} is a dual pair, property \eqref{property1} is automatic. To see that $\mathrm{pr}:\big(\Sigma|_{X},\text{Gr}(\Omega_{\chi}|_{X})\big)\rightarrow(X,i^{!}L_{\pi})$ is forward Dirac, consider the following commutative diagram of Dirac manifolds and smooth maps:
		\[
		\begin{tikzcd}[column sep=large, row sep=large]
			\big(\Sigma|_{X},\text{Gr}(\Omega_{\chi}|_{X})\big)\arrow[r,"\mathrm{pr}"]\arrow[d,hookrightarrow,"i'"] & (X,i^{!}L_{\pi})\arrow[d,hookrightarrow,"i"]\\
			(\Sigma,\text{Gr}(\Omega_{\chi}))\arrow[r,"\mathrm{pr}"] & (M,L_{\pi})
		\end{tikzcd}.
		\]
		The maps $i'$ on the left and $i$ on the right are backward Dirac by definition, and the bottom map $\mathrm{pr}$ is forward Dirac because of the dual pair \eqref{pair}. Since the bottom map $\mathrm{pr}$ is a submersion, we can apply \cite[Lemma 3]{maps} to obtain that also the map at the top $\mathrm{pr}:\big(\Sigma|_{X},\text{Gr}(\Omega_{\chi}|_{X})\big)\rightarrow(X,i^{!}L_{\pi})$ is forward Dirac. 
		
		Similarly, we get that $\exp_{\chi}:\big(\Sigma|_{X},\text{Gr}(\Omega_{\chi}|_{X})\big)\rightarrow(P,-L_{\pi_{P}})$ is forward Dirac considering the diagram
		\[
		\begin{tikzcd}[column sep=large, row sep=large]
			\big(\Sigma|_{X},\text{Gr}(\Omega_{\chi}|_{X})\big)\arrow[r,"\exp_{\chi}"]\arrow[d,hookrightarrow,"i'"] & (P,-L_{\pi_{P}})\arrow[d,hookrightarrow,"i"]\\
			(\Sigma,\text{Gr}(\Omega_{\chi}))\arrow[r,"\exp_{\chi}"] & (M,-L_{\pi})
		\end{tikzcd}.
		\]
		Here the map $i'$ is backward Dirac, the map $i$ is backward (and forward) Dirac, and the bottom map $\exp_{\chi}$ is forward Dirac because of the dual pair \eqref{pair}. Again, the map $\exp_{\chi}$ on the bottom is a submersion, so we can apply \cite[Lemma 3]{maps} to obtain that also the map $\exp_{\chi}:\big(\Sigma|_{X},\text{Gr}(\Omega_{\chi}|_{X})\big)\rightarrow(P,-L_{\pi_{P}})$ at the top is forward Dirac.
		
		It remains to check that property \eqref{property2} holds. For $(x,\xi)\in \Sigma|_{X}$, we have
		\begin{equation*}
			K_{(x,\xi)}
			=\pi_{\chi}^{\sharp}\left((\mathrm{d}\mathrm{pr})^{*}_{(x,\xi)}T_{x}X^{\circ}\right)\cap T_{(x,\xi)}(T^{*}M|_{X}),
		\end{equation*}
		where $\pi_{\chi}:=\Omega_{\chi}^{-1}$. Consequently, we obtain
		\begin{equation*}
			(S_{1})_{(x,\xi)}\cap K_{(x,\xi)}
			=\pi_{\chi}^{\sharp}\left((\mathrm{d}\mathrm{pr})^{*}_{(x,\xi)}\left(T_{x}X^{\circ}\cap\ker\pi_{x}^{\sharp}\right)\right),
		\end{equation*}
		using that the left leg of the dual pair \eqref{pair} is a Poisson map.
		The equality \eqref{T} in the proof of Lemma \ref{realization} shows that
		\[
		(S_{2})_{(x,\xi)}=\pi_{\chi}^{\sharp}\left((\mathrm{d}\mathrm{pr})_{(x,\xi)}^{*}\big(T_{x}X^{\perp_{\pi}}\big)^{\circ}\right),
		\]
		so we obtain
		\[
		(S_{1})_{(x,\xi)}\cap K_{(x,\xi)}\cap(S_{2})_{(x,\xi)}=\pi_{\chi}^{\sharp}\left((\mathrm{d}\mathrm{pr})^{*}_{(x,\xi)}\left(T_{x}X^{\circ}\cap\ker\pi_{x}^{\sharp}\right)\right).
		\]
		Consequently,
		\begin{align*}
			rk(S_{1}\cap K\cap S_{2})_{(x,\xi)}&=\dim\big(T_{x}X^{\circ}\cap\ker\pi_{x}^{\sharp}\big)\\
			&=\dim(T_{x}X^{\circ})-\dim(T_{x}X^{\perp_{\pi}})\\
			&=(\dim M-\dim X)-(\dim P-\dim X)\\
			&=\dim \Sigma|_{X}-\dim X-\dim P.
		\end{align*}
		So also property \eqref{property2} holds, and this finishes the proof.
	\end{proof}

	We are now ready to state the main results of this section.
	
	\begin{thm}\label{normalform}
		Let $X\subset (M,\pi)$ be a coregular submanifold with local Poisson saturation $(P,\pi_{P})$. Choose a complement $W$ to $TX^{\perp_{\pi}}$ in $TM|_{X}$ and denote by $j:(TX^{\perp_{\pi}})^{*}\hookrightarrow T^{*}M|_{X}$ the corresponding inclusion. Then $-j^{*}(\Omega_{\chi}|_{X})\in\mathcal{E}_{W}(-\sigma-\tau)$. Moreover, the corresponding local model $\left(U,\pi\big(W,-j^{*}(\Omega_{\chi}|_{X})\big)\right)$ is isomorphic with $(P,\pi_{P})$ around $X$. Explicitly, a Poisson diffeomorphism onto a neighborhood of $X$ is given by
		\[
		\exp_{\chi}\circ j:\left(U,\pi\big(W,-j^{*}(\Omega_{\chi}|_{X})\big)\right)\overset{\sim}{\rightarrow}(P,\pi_{P}).
		\]
	\end{thm}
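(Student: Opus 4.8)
The plan is to mirror the proof of the normal form around Poisson transversals in \cite{transversals}, systematically replacing the Poisson self-dual pair \eqref{pair} by the weak Dirac dual pair of Lemma \ref{diracpair}. There are three things to establish: (1) that $-j^{*}(\Omega_{\chi}|_{X})$ is a closed extension of $-\sigma-\tau$, so that the local model $\pi\big(W,-j^{*}(\Omega_{\chi}|_{X})\big)$ is defined by Proposition \ref{model}; (2) that $\exp_{\chi}\circ j$ restricts to a diffeomorphism from a neighborhood $U$ of $X$ in $\big(TX^{\perp_{\pi}}\big)^{*}$ onto an open neighborhood of $X$ in $P$; and (3) that this diffeomorphism is Poisson.

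For (1), closedness is immediate since $\Omega_{\chi}$ is closed, and the restriction along the zero section is obtained from the behaviour of $\Omega_{\chi}$ along the zero section of $T^{*}M$: linearizing the spray flow and averaging $\omega_{can}$ (cf. \cite{realizations}), one finds that in the splitting $T_{(x,0)}T^{*}M=T_{x}M\oplus T_{x}^{*}M$ the form $\Omega_{\chi}$ is the tautological pairing plus a term involving $\pi_{x}$ on the cotangent directions; pulling back by $j$, whose differential along $X$ sends $(v,\xi)\in T_{x}X\oplus\big(T_{x}X^{\perp_{\pi}}\big)^{*}$ to $(v,j(\xi))$, and comparing with the formulas \eqref{st}, one gets that the $T^{*}X\otimes TX^{\perp_{\pi}}$-, $\wedge^{2}TX^{\perp_{\pi}}$- and $\wedge^{2}T^{*}X$-components of $j^{*}(\Omega_{\chi}|_{X})|_{X}$ are $\tau$, $\sigma$ and $0$; hence $\big(-j^{*}(\Omega_{\chi}|_{X})\big)|_{X}=-\sigma\oplus-\tau\oplus 0$, i.e. $-j^{*}(\Omega_{\chi}|_{X})\in\mathcal{E}_{W}(-\sigma-\tau)$. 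For (2), the map $\exp_{\chi}\circ j$ restricts to the identity on $X$, and by \cite[Lemma 8]{transversals} its differential along $X$ is $(v,\xi)\mapsto v+\pi^{\sharp}(j(\xi))$; this is injective — if $\pi^{\sharp}(j(\xi))\in T_{x}X$ then $j(\xi)\in\big(T_{x}X^{\perp_{\pi}}\big)^{0}\cap W^{0}=\{0\}$, exactly as in Step 1 of the proof of Theorem \ref{satsmooth} — with image $T_{x}X\oplus\pi^{\sharp}\big(T_{x}X^{\perp_{\pi}}\big)^{*}=T_{x}P$ by \eqref{normal}, and $\exp_{\chi}(j(\xi))\in\exp_{\chi}(\Sigma|_{X})=P$ for $\xi$ small; so $\exp_{\chi}\circ j$ is a local diffeomorphism near $X$ into $P$, and by the shrinking argument of Theorem \ref{embedding} and Remark \ref{nbhd} it is a diffeomorphism from a neighborhood $U$ of $X$ in $\big(TX^{\perp_{\pi}}\big)^{*}$ onto an open neighborhood of $X$ in $P$.

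Step (3) is the heart of the argument. Set $S:=j\big(\big(TX^{\perp_{\pi}}\big)^{*}\big)\subset\Sigma|_{X}$, a submanifold near $X$. By (2), $\exp_{\chi}|_{S}$ is a diffeomorphism onto a neighborhood of $X$ in $P$; moreover $\ker\big(d(\exp_{\chi}|_{\Sigma|_{X}})\big)$ has constant rank $\dim M-rk(TX^{\perp_{\pi}})$ (Step 2 of the proof of Theorem \ref{satsmooth}), while $\dim S=\dim X+rk(TX^{\perp_{\pi}})$, so a dimension count together with $TS\cap\ker(d\exp_{\chi})=0$ (from (2)) gives $T(\Sigma|_{X})|_{S}=TS\oplus\ker\big(d(\exp_{\chi}|_{\Sigma|_{X}})\big)$ near $X$; thus $S$ is a cross-section for the $\exp_{\chi}$-leg of the weak Dirac dual pair of Lemma \ref{diracpair}. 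Restricting that dual pair to $S$ and appealing to the theory of weak Dirac dual pairs from \cite{dirac} — the presymplectic orthogonality \eqref{property1}--\eqref{property2} of the two legs being the crucial ingredient — one obtains that $\exp_{\chi}|_{S}$ transports the Dirac structure $i^{*}L_{\pi}$, pulled back along $pr|_{S}=pr\circ j$ (which is the bundle projection $pr\colon\big(TX^{\perp_{\pi}}\big)^{*}\to X$, yielding $pr^{*}(i^{*}L_{\pi})$) and gauge-transformed by $j^{*}(\Omega_{\chi}|_{X})$, onto $-L_{\pi_{P}}$; after unwinding the signs and the gauge transform this says precisely that $\exp_{\chi}\circ j$ identifies $\big(U,\pi(W,-j^{*}(\Omega_{\chi}|_{X}))\big)$ with $(P,\pi_{P})$ as Poisson manifolds. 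Alternatively, one may run the Moser-type computation of \cite{transversals} verbatim, with $\Omega_{\chi}$ replaced by the presymplectic form $\Omega_{\chi}|_{X}$ and Poisson maps replaced by forward Dirac submersions, invoking the Poisson Moser theorem \cite{meinrenken} where a flow is needed.

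I expect the main obstacle to be step (3): transplanting the Poisson-transversal dual-pair argument into Dirac geometry. Since $\Omega_{\chi}|_{X}$ is only presymplectic, with kernel controlled by \eqref{property2}, one cannot argue with genuine symplectic orthogonal complements but must work through the weak-dual-pair formalism of \cite{dirac} and with forward Dirac submersions, which forces careful bookkeeping of the various backward pullbacks and gauge transforms of Dirac structures and of the signs entering through the $-L_{\pi_{P}}$ of Lemma \ref{diracpair} and the $-j^{*}(\Omega_{\chi}|_{X})$ of the local model. The only other external input is the description of $\Omega_{\chi}$ along the zero section used in step (1); everything else is the differential topology already prepared in Section 1 and the Appendix.
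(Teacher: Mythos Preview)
Your proposal is correct and follows essentially the same approach as the paper: verify the restriction of $\Omega_{\chi}$ along the zero section via \cite[Lemma 8]{transversals}, invoke the weak Dirac dual pair of Lemma \ref{diracpair}, and use the diffeomorphism already established in Theorem \ref{satsmooth}. The paper's execution of your step (3) is slightly more direct than your cross-section argument: it applies \cite[Proposition 6]{dirac} on all of $\Sigma|_{X}$ to obtain the equality of Dirac structures $(pr_{M}^{*}(i^{*}L_{\pi}))^{-\Omega_{\chi}|_{X}}=\exp_{\chi}^{*}L_{\pi_{P}}$, and then pulls this back along $j$ (noting $j$ is transverse to the leaves), thereby avoiding any need to verify that the restriction to $S$ is again a weak dual pair.
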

	
	We will denote by $\mathrm{pr}_{M}$ and $\mathrm{pr}_{X}$ the bundle projections $T^{*}M|_{X}\rightarrow X$ and $\big(TX^{\perp_{\pi}}\big)^{*}\rightarrow X$, respectively. So $\mathrm{pr}_{M}\circ j = \mathrm{pr}_{X}$.
	
	\begin{proof}
		We first check that $-j^{*}(\Omega_{\chi}|_{X})\in\mathcal{E}_{W}(-\sigma-\tau)$. The fact that $-j^{*}(\Omega_{\chi}|_{X})$ restricts along $X\subset(TX^{\perp_{\pi}})^{*}$ as required in \eqref{restr} is an immediate consequence of the following equality \cite[Lemma 24]{transversals}: 
		\[
		\Omega_{\chi}\big((v_{1},\xi_{1}),(v_{2},\xi_{2})\big)=\langle v_{1},\xi_{2}\rangle - \langle v_{2},\xi_{1}\rangle+\pi(\xi_{1},\xi_{2}),
		\]
		where $(v_{1},\xi_{1}),(v_{2},\xi_{2})\in T_{x}(T^{*}M)=T_{x}M\oplus T_{x}^{*}M$ for  $x\in M$. 
		
		To prove the second statement, we apply \cite[Proposition 6]{dirac} to the weak dual pair constructed in Lemma \ref{diracpair},
		\[
		\begin{tikzcd}
			(X,i^{!}L_{\pi})&\big(\Sigma|_{X},\text{Gr}(\Omega_{\chi}|_{X})\big)\arrow{r}{\exp_{\chi}}\arrow[l,"\mathrm{pr}_{M}",swap]&(P,-L_{\pi_{P}}),
		\end{tikzcd}
		\]
		and we get the following equality of Dirac structures on $\Sigma|_{X}$:
		\begin{equation}\label{eq:dirbeforepull}
			(\mathrm{pr}_{M}^{!}(i^{!}L_{\pi}))^{-\Omega_{\chi}|_{X}}=\exp_{\chi}^{!}L_{\pi_{P}}.
		\end{equation}
		Since the map $j:(TX^{\perp_{\pi}})^{*}\hookrightarrow T^{*}M|_{X}$ is transverse to this Dirac structure, we can pull back the equality \eqref{eq:dirbeforepull} to  $j^{-1}(\Sigma|_{X})\cong j(TX^{\perp_{\pi}})^{*}\cap\Sigma$,
			which yields
			\begin{equation}\label{comp}
				(\mathrm{pr}_{X}^{!}(i^{!}L_{\pi}))^{-j^{*}(\Omega_{\chi}|_{X})}=(\exp_{\chi}\circ j)^{!}L_{\pi_{P}}.
		\end{equation}
		The left hand side of \eqref{comp} is Poisson on a neighborhood $U\subset j^{-1}(\Sigma|_{X})\subset(TX^{\perp_{\pi}})^{*}$, where it defines the local model $\left(U,\pi\big(W,-j^{*}(\Omega_{\chi}|_{X})\big)\right)$. Moreover, by the proof of Theorem \ref{satsmooth}, we know that $\exp_{\chi}\circ j$ takes $j^{-1}(\Sigma|_{X})$ diffeomorphically onto $P$. So we obtain that
		\[
		\exp_{\chi}\circ j:\left(U,\pi\big(W,-j^{*}(\Omega_{\chi}|_{X})\big)\right)\rightarrow(P,\pi_{P})
		\]
		is a Poisson diffeomorphism onto a neighborhood of $X\subset (P,\pi_{P})$, as desired.
	\end{proof}
	
	We now combine Proposition \ref{independent} and Theorem \ref{normalform}. Also noticing that any local model $(U,\pi(W,\eta))$ for the local Poisson saturation of $X$ is constructed out of the restriction $\pi|_{X}$, we obtain the following.
	
	\begin{cor}\label{cor:normalform}
		Let $X\subset(M,\pi)$ be a coregular submanifold with local Poisson saturation $(P,\pi_P)$. For any choice of complement $W$ to $TX^{\perp_{\pi}}$ and closed extension $\eta\in\mathcal{E}_{W}(-\sigma-\tau)$, the corresponding local model $(U,\pi(W,\eta))$ is Poisson diffeomorphic around $X$ with $(P,\pi_P)$. In particular, up to Poisson diffeomorphism, the local Poisson saturation is determined by the restriction of $\pi$ along $X$.
	\end{cor}
	
	In general, one needs the full information of $\pi|_{X}$ in order to determine the local Poisson saturation of $X$. We will see in the next section that for certain coregular submanifolds $X$, only part of this information is required.

	\begin{remark}
		We outline an alternative proof for our normal form result, relying on the fact that a coregular submanifold $X\subset(M,\pi)$ is a transversal in its local Poisson saturation. This allows one to use the normal form around Dirac transversals \cite[Thm. 5.1]{eulerlike}, \cite[\S 7]{dirac} instead of Theorem \ref{normalform}, which in combination with Proposition \ref{independent} yields Corollary \ref{cor:normalform}. We now elaborate on this, using explicitly the normal form in \cite[\S 7]{dirac} because it is the closest in spirit to the arguments in this note.
		
		A choice of complement $TM|_{X}=TX^{\perp_{\pi}}\oplus W$ gives an inclusion $j:(TX^{\perp_{\pi}})^{*} \hookrightarrow T^{*}M|_{X}$, and an identification of the normal bundle
		\[
		\pi^{\sharp}\circ j: (TX^{\perp_{\pi}})^{*}\rightarrow \pi^{\sharp}(j(TX^{\perp_{\pi}})^{*})\cong TP|_{X}/TX.
		\]
		According to \cite[\S 7]{dirac}, the Dirac manifold $(P,L_{\pi_P})$ is isomorphic around $X$ with
		\begin{equation}\label{eq:normdirac}
			\left(U\subset(TX^{\perp_{\pi}})^{*}, \big(\mathrm{pr}^{!}(i^{!}(L_{\pi_P}))\big)^{-\rho^{*}\omega|_{X}}\right).
		\end{equation}
		Here $\rho$ is a splitting of the exact sequence
		\begin{equation}\label{eq:seqdirac}
			0\longrightarrow i^{!}L_{\pi_P}\longrightarrow L_{\pi_P}|_{X}\longrightarrow (TX^{\perp_{\pi}})^{*}\longrightarrow 0,
		\end{equation}
		where the last arrow is the anchor map $\mathrm{pr}_{T}:L_{\pi_P}|_{X}\rightarrow TP|_{X}$ composed with the projection to the normal bundle $TP|_{X}/TX\cong(TX^{\perp_{\pi}})^{*}$. The two-form $\omega$ appearing in \eqref{eq:normdirac} is defined choosing a spray on $L_{\pi_P}$, see \eqref{eq:sprayform} for the precise formula. Hence, to prove our normal form for one specific choice of local model, as we did in Theorem \ref{normalform}, we just have to show that there is a splitting $\rho$ of the sequence \eqref{eq:seqdirac} satisfying
		\[
		\rho^{*}\omega|_{X}=\sigma+\tau,
		\]
		where $\sigma$ and $\tau$ were defined in \eqref{st}. We claim that such a splitting is given by
		\[
		\rho:(TX^{\perp_{\pi}})^{*}\rightarrow L_{\pi_P}|_{X}:\xi\mapsto \pi^{\sharp}(j(\xi))+(\mathrm{d}\iota)^{*}(j(\xi)),
		\]
		where $\iota:P\hookrightarrow M$ is the inclusion. Indeed, choosing $(v_1,\xi_1),(v_2,\xi_2)\in T_{x}(TX^{\perp_{\pi}})^{*}$, we get
		\begin{align*}
			&\rho^{*}\omega|_{X}\big((v_1,\xi_1),(v_2,\xi_2)\big)\\
			&\hspace{0.5cm}=\omega|_{X}\left(\big(v_1,\pi^{\sharp}(j(\xi_1))+(\mathrm{d}\iota)^{*}(j(\xi_1))\big),\big(v_2,\pi^{\sharp}(j(\xi_2))+(\mathrm{d}\iota)^{*}(j(\xi_2))\big)\right)\\
			&\hspace{0.5cm}=(\mathrm{d}\iota)^{*}(j(\xi_2))\left(v_1+\frac{1}{2}\pi^{\sharp}(j(\xi_1))\right)-(\mathrm{d}\iota)^{*}(j(\xi_1))\left(v_2+\frac{1}{2}\pi^{\sharp}(j(\xi_2))\right)\\
			&\hspace{0.5cm}=\langle v_1,j(\xi_2)\rangle-\langle v_2,j(\xi_1)\rangle+\pi\big(j(\xi_1),j(\xi_2)\big)\\
			&\hspace{0.5cm}=(\sigma+\tau)\big((v_1,\xi_1),(v_2,\xi_2)\big),
		\end{align*}
		using \cite[eq.~6]{dirac} in the second equality. This proves that the local Poisson saturation  $(P,\pi_P)$ is isomorphic around $X$ with the local model $\left(U,\pi\big(W,-\rho^{*}\omega|_{X}\big)\right)$. Along with Proposition \ref{independent}, this gives an alternative proof for our normal form in Corollary \ref{cor:normalform}.
	\end{remark}

	\section{Some particular cases}\label{sec:fourth}
	
	We proved that the local model $\big(U,\pi(W,\eta)\big)$ described in Proposition \ref{model} depends neither on the choice of complement $W$ to $TX^{\perp_{\pi}}$ in $TM|_{X}$, nor on the choice of closed extension $\eta$. We now show that, for certain classes of coregular submanifolds $X\subset (M,\pi)$, a good choice of complement and/or closed extension simplifies the normal form considerably. Some of our results recover well-known normal form and rigidity statements around distinguished submanifolds in symplectic and Poisson geometry.
	
	\subsection{Submanifolds in symplectic geometry}
	\leavevmode
	\vspace{0.1cm}
	
	Recall that, if $(M,\omega)$ is a symplectic manifold and $N\subset M$ is any submanifold, then the restriction of $\omega$ to $TM|_{N}$ determines the symplectic form $\omega$ on a neighborhood of $N$ (see \cite[Theorem 4.1]{lagrangians}). We can recover this result from our normal form, as follows.
	
	First note that, in case $\pi=\omega^{-1}$ is symplectic, any submanifold $X\subset(M,\pi)$ is coregular since $TX^{\perp_{\pi}}=TX^{\perp_{\omega}}$, where $TX^{\perp_{\omega}}=\{v\in TM|_{X}:\omega(v,w)=0\ \forall w\in TX\}$ denotes the symplectic orthogonal of $X$. Next, the local Poisson saturation $(P,\pi_{P})$ of $X$ is an embedded submanifold of $M$ of dimension $\dim X + rk\big(\pi^{\sharp}(TX^{\perp_{\pi}})^{*}\big)$, by the equality \eqref{normal}. 
	So if $\pi$ is symplectic, then $P\subset M$ is a neighborhood of $X$. Finally, the Poisson structure $\pi(W,\eta)=\big(\mathrm{pr}^{!}(i^{!}L_{\pi})\big)^{\eta}$ from the local model is determined by the restriction $\pi|_{X}$.
	
	In conclusion, our normal form shows that, for any submanifold $X$ of the symplectic manifold $(M,\pi)$, the restriction $\pi|_{X}$ determines $\pi$ on a neighborhood of $X\subset M$, which recovers the aforementioned rigidity result in symplectic geometry.

	\subsection{Poisson transversals}
	\leavevmode
	\vspace{0.1cm}
	
	A submanifold $X$ of a Poisson manifold $(M,\pi)$ is called a Poisson transversal if it meets each symplectic leaf transversally and symplectically, that is
	\[
	TX\oplus TX^{\perp_{\pi}}=TM|_{X}.
	\]
	In the local model of Proposition \eqref{model}, we can take $TX$ as a canonical complement to $TX^{\perp_{\pi}}$ in $TM|_{X}$. Then the associated embedding $j:(TX^{\perp_{\pi}})^{*}\hookrightarrow T^{*}M|_{X}$ identifies $(TX^{\perp_{\pi}})^{*}$ with $TX^{\circ}$. The following simplifications occur in the local model:
	\begin{itemize}
		\item The pullback $i^{!}L_{\pi}$ of the Dirac structure $L_{\pi}$ to $X$ defines a Poisson structure on $X$ \cite[Lemma 3]{transversals}, which we denote by $\pi_{X}\in\Gamma(\wedge^{2}TX)$. 
		\item Consider $\sigma\in\Gamma(\wedge^{2}TX^{\perp_{\pi}})$ and $\tau\in\Gamma(T^{*}X\otimes TX^{\perp_{\pi}})$ defined in \eqref{st}:
		\begin{align*}
			&\sigma(\xi_{1},\xi_{2})=\pi\big(j(\xi_{1}),j(\xi_{2})\big),\nonumber\\
			&\tau\big((v_{1},\xi_{1}),(v_{2},\xi_{2})\big)=\langle v_{1},j(\xi_{2})\rangle - \langle v_{2},j(\xi_{1})\rangle,
		\end{align*}
		for $\xi_{1},\xi_{2}\in(T_{x}X^{\perp_{\pi}})^{*}$ and $v_{1},v_{2}\in T_{x}X$. Since $j\left((TX^{\perp_{\pi}})^{*}\right)=TX^{\circ}$, we get that $\tau\equiv 0$, and  
		since the restriction of $\pi$ to the conormal bundle $TX^{\circ}$ is fiberwise non-degenerate, we get a symplectic vector bundle $\left((TX^{\perp_{\pi}})^{*},\sigma\right)$.
	\end{itemize}
	Moreover, since $X$ is a transversal, its local Poisson saturation $(P,\pi_{P})$ is in fact a neighborhood of $X$ in $M$. In conclusion, our normal form shows that a neighborhood of $X$ in $(M,\pi)$ is Poisson diffeomorphic with a neighborhood of $X$ in $(TX^{\perp_{\pi}})^{*}$, endowed with the Poisson structure
	\[
	\big(\mathrm{pr}^{!}(L_{\pi_{X}})\big)^{\eta},
	\]
	where $\eta$ is a closed extension of $-\sigma$. This is exactly the normal form established in \cite{transversals}.
	
	\subsection{Coregular coisotropic submanifolds}
	\leavevmode
	\vspace{0.1cm}
	
	Recall that a submanifold $N$ of a symplectic manifold $(M,\omega)$ is called coisotropic if its symplectic orthogonal $TN^{\perp_{\omega}}$ is contained in $TN$. Gotay's theorem \cite{gotay} provides a normal form for $\omega$ around $N$, which is obtained as follows. Choose a complement to $TN^{\perp_{\omega}}$ inside $TN$, and denote by $j:(TN^{\perp_{\omega}})^{*}\hookrightarrow T^{*}N$ the induced inclusion. On the total space of the vector bundle $\mathrm{pr}:(TN^{\perp_{\omega}})^{*}\rightarrow N$, one gets a closed two-form 
	\[
	\mathrm{pr}^{*}(i^{*}\omega)+j^{*}\omega_{can},
	\] 
	where $i^{*}\omega$ is the pullback of $\omega$ to $N$ and $\omega_{can}$ is the canonical symplectic form on $T^{*}N$. This two-form is non-degenerate on a neighborhood of the zero section $N\subset(TN^{\perp_{\omega}})^{*}$, and $(M,\omega)$ is isomorphic with $\big((TN^{\perp_{\omega}})^{*},\mathrm{pr}^{*}(i^{*}\omega)+j^{*}\omega_{can}\big)$ around $N$. In particular, the pullback $i^{*}\omega\in\Gamma(\wedge^{2}T^{*}N)$ determines $\omega$ on a neighborhood of $N\subset M$.

	More generally, recall that a submanifold $X$ of a Poisson manifold $(M,\pi)$ is  coisotropic if 
	$TX^{\perp_{\pi}}\subset TX$. In this subsection, we prove a Poisson version of Gotay's theorem by specializing our normal form to coregular submanifolds $i:X\hookrightarrow(M,\pi)$ that are coisotropic. 
	Mimicking Gotay's construction, we choose a complement $TX=TX^{\perp_{\pi}}\oplus G$ to get an inclusion $j:(TX^{\perp_{\pi}})^{*}\hookrightarrow T^{*}X$, and we obtain a Dirac structure
		\begin{equation}\label{coisomodel}
			\big(\mathrm{pr}^{!}(i^{!}L_{\pi})\big)^{j^{*}\omega_{can}}
		\end{equation}
		on $(TX^{\perp_{\pi}})^{*}$, where $\omega_{can}$ denotes the canonical symplectic form on $T^{*}X$.

	\begin{cor}[Poisson version of Gotay's Theorem]\label{coiso}
		Let $X\subset(M,\pi)$ be coregular coisotropic. The Dirac structure \eqref{coisomodel} defines a Poisson structure on a neighborhood  $U\subset(TX^{\perp_{\pi}})^{*}$ of $X$, which is Poisson diffeomorphic around $X$ with the local Poisson saturation of $X$.
	\end{cor}
	\begin{proof}
		It suffices to show that the Dirac structure \eqref{coisomodel} is diffeomorphic around $X$ with a local model for the local Poisson saturation of $X$. By Lemma \ref{compprep} in the next subsection, the splitting $TX=TX^{\perp_{\pi}}\oplus G$ induces a splitting $TM|_{X}=TX^{\perp_{\pi}}\oplus W_{G}$, where
		\[
		\pi^{\sharp}((W_{G})^{\circ})\subset W_{G}\hspace{0.5cm}\text{and}\hspace{0.5cm}W_{G}\cap TX=G.
		\] 
		Denote by $\tilde{j}:(TX^{\perp_{\pi}})^{*}\hookrightarrow T^{*}M|_{X}$ the inclusion induced by the complement $W_{G}$; it embeds $(TX^{\perp_{\pi}})^{*}$ into $T^{*}M|_{X}$ as $(W_{G})^{\circ}$. Consider $\sigma\in\Gamma(\wedge^{2}TX^{\perp_{\pi}})$ and $\tau\in\Gamma(T^{*}X\otimes TX^{\perp_{\pi}})$ as defined in \eqref{st}:
		\begin{align*}
			&\sigma(\xi_{1},\xi_{2})=\pi\big(\tilde{j}(\xi_{1}),\tilde{j}(\xi_{2})\big),\nonumber\\
			&\tau\big((v_{1},\xi_{1}),(v_{2},\xi_{2})\big)=\langle v_{1},\tilde{j}(\xi_{2})\rangle - \langle v_{2},\tilde{j}(\xi_{1})\rangle,
		\end{align*}
		for $\xi_{1},\xi_{2}\in\big(T_{x}X^{\perp_{\pi}}\big)^{*}$ and $v_{1},v_{2}\in T_{x}X$. Since $\pi^{\sharp}((W_{G})^{\circ})\subset W_{G}$, we have $\sigma\equiv 0$, and since $W_{G}\cap TX=G$, we have 
		\begin{align*}
			\tau\big((v_{1},\xi_{1}),(v_{2},\xi_{2})\big)&=\langle v_{1},\tilde{j}(\xi_{2})\rangle - \langle v_{2},\tilde{j}(\xi_{1})\rangle\\
			&=\langle v_{1},j(\xi_{2})\rangle - \langle v_{2},j(\xi_{1})\rangle\\
			&=\left.(j^{*}\omega_{can})\right|_{X}\big((v_{1},\xi_{1}),(v_{2},\xi_{2})\big)
		\end{align*}
		for $\xi_{1},\xi_{2}\in(T_{x}X^{\perp_{\pi}})^{*}$ and $v_{1},v_{2}\in T_{x}X$. 
		This shows that $\big(U,\pi(W_{G},-j^{*}\omega_{can})\big)$ is a local model for the local Poisson saturation of $X$, where $U\subset(TX^{\perp_{\pi}})^{*}$ is a suitable neighborhood of $X$. Note however that the Dirac structure \eqref{coisomodel} still differs by a sign from this model; we now remedy this. Shrinking $U$ if necessary, we can assume that $U$ is invariant under fiberwise multiplication by $-1$. Denoting this map by $m_{-1}$, we have
		\[
		m_{-1}^{!}\left((\mathrm{pr}^{!}(i^{!}L_{\pi}))^{j^{*}\omega_{can}}\right)=\big((\mathrm{pr}\circ m_{-1})^{!}i^{!}L_{\pi}\big)^{(j\circ m_{-1})^{*}\omega_{can}}=(\mathrm{pr}^{!}(i^{!}L_{\pi}))^{-j^{*}\omega_{can}},
		\]
		the latter being the Poisson structure $\big(U,\pi(W_{G},-j^{*}\omega_{can})\big)$. This shows that the Dirac structure \eqref{coisomodel} is in fact Poisson on $U$, and that it is Poisson diffeomorphic around $X$ with the local Poisson saturation of $X$.
	\end{proof}
	
	In particular, the pullback Dirac structure $i^{!}L_{\pi}$ determines a neighborhood of $X$ in its local Poisson saturation, up to Poisson diffeomorphism. Indeed, if one knows $i^{!}L_{\pi}$, then one also knows $TX^{\perp_{\pi}}=i^{!}L_{\pi}\cap TX$, hence one can construct the local model \eqref{coisomodel} which recovers the local Poisson saturation up to Poisson diffeomorphism around $X$. 
	In the next subsection, we generalize this result to the class of coregular pre-Poisson submanifolds.
	
	\subsection{Coregular pre-Poisson submanifolds}
	\leavevmode
	
	\vspace{0.1cm}
	Recall that, given a symplectic manifold $(M,\omega)$, a submanifold $i:N\hookrightarrow (M,\omega)$ is said to be of constant rank if the pullback $i^{*}\omega$ has constant rank. Marle's constant rank theorem \cite{marle} states that a neighborhood of a constant rank submanifold $i:N\hookrightarrow (M,\omega)$ is determined by the pullback $i^{*}\omega$ and the symplectic vector bundle $\big(TN^{\perp_{\omega}}/\big(TN^{\perp_{\omega}}\cap TN\big),\omega\big)$.
	
	Generalizing this notion to Poisson geometry, a submanifold $X$ of a Poisson manifold $(M,\pi)$ is called pre-Poisson if $TX+TX^{\perp_{\pi}}$ has constant rank \cite{pre-poisson}. It is equivalent to ask that the bundle map $\mathrm{pr}\circ\pi^{\sharp}:TX^{\circ}\rightarrow TX^{\perp_{\pi}}\rightarrow TM|_{X}/TX$ has constant rank. Examples include Poisson transversals (in which case $\mathrm{pr}\circ\pi^{\sharp}$ is an isomorphism) and coisotropic submanifolds (in which case $\mathrm{pr}\circ\pi^{\sharp}$ is the zero map). If $X$ is coregular pre-Poisson, i.e. $TX^{\perp_{\pi}}$ has constant rank, then its characteristic distribution $TX^{\perp_{\pi}}\cap TX$ also has constant rank. 
	
	In this subsection, we prove a Poisson version of Marle's theorem by specializing our normal form to coregular pre-Poisson submanifolds. We need the following auxiliary result.
	
	\begin{lemma}\label{compprep}
		Let $X\subset (M,\pi)$ be a coregular pre-Poisson submanifold. For any choice of splittings $TX=(TX^{\perp_{\pi}}\cap TX)\oplus G$ and $TX^{\perp_{\pi}}=(TX^{\perp_{\pi}}\cap TX)\oplus H$, there exists a complement $TM|_{X}=(TX^{\perp_{\pi}}\cap TX)\oplus H\oplus W_{G,H}$ such that
		\[
		\pi^{\sharp}\left(\big(H+W_{G,H}\big)^{\circ}\right)\subset W_{G,H}\hspace{0.5cm}\text{and}\hspace{0.5cm}W_{G,H}\cap TX=G .
		\]
	\end{lemma}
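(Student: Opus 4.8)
The plan is to adapt, essentially verbatim, the four-step argument in the proof of Lemma \ref{complement}: there $TX^{\perp_{\pi}}$ appeared as a Lagrangian subbundle of the symplectic bundle $\pi^{\sharp}(G^{0})$, and $W_{G}$ was built from a Lagrangian complement; here the analogous role will be played by the characteristic distribution $C:=TX^{\perp_{\pi}}\cap TX$ inside the symplectic bundle $E:=\pi^{\sharp}\big((G\oplus H)^{0}\big)$, and one recovers Lemma \ref{complement} by taking $H=0$. Throughout write $C:=TX^{\perp_{\pi}}\cap TX$.

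First I would set up the symplectic vector bundle. Put $E:=\pi^{\sharp}(G^{0}\cap H^{0})\subset TM|_{X}$ with the skew form $\omega\big(\pi^{\sharp}\alpha,\pi^{\sharp}\beta\big):=\pi(\alpha,\beta)$ for $\alpha,\beta\in G^{0}\cap H^{0}$, which is well defined by skew-symmetry of $\pi$. Using $\ker\pi^{\sharp}\subset(TX^{\perp_{\pi}})^{0}\subset H^{0}$ and the fact that regularity of $X$ makes $\ker\pi^{\sharp}\cap TX^{0}$ of constant rank $\dim M-\dim X-rk(TX^{\perp_{\pi}})$, a rank count gives $rk(E)=2\,rk(C)$, and the same ingredients yield the identity $\pi^{\sharp}(TX^{0}\cap H^{0})=C$ (the inclusion $\subset$ coming from exactness of \eqref{sequence}, i.e.\ $\pi^{\sharp}\big((TX^{\perp_{\pi}})^{0}\big)\subset TX$, and the reverse from equality of ranks). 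In particular $C=\pi^{\sharp}(TX^{0}\cap H^{0})\subset E$, and $C$ is $\omega$-isotropic since $\omega(\pi^{\sharp}\alpha,\pi^{\sharp}\beta)=\langle\pi^{\sharp}\alpha,\beta\rangle=0$ for $\alpha,\beta\in TX^{0}$ with $\pi^{\sharp}\alpha\in C\subset TX$; being of half the rank of $E$, it is Lagrangian. For non-degeneracy of $\omega$ one checks $E\cap(G\oplus H)=0$: given $v=\pi^{\sharp}\alpha\in G\oplus H$ with $\alpha\in G^{0}\cap H^{0}$, write $v=g+h$ and $h=\pi^{\sharp}\gamma$ with $\gamma\in TX^{0}$; then $\pi^{\sharp}(\alpha-\gamma)=g\in TX$, so $\alpha-\gamma\in(TX^{\perp_{\pi}})^{0}$ by \eqref{sequence}, hence $\alpha-\gamma\in(TX^{\perp_{\pi}})^{0}\cap G^{0}=(TX\oplus H)^{0}\subset TX^{0}$, so $\alpha\in TX^{0}\cap H^{0}$ and $v=\pi^{\sharp}\alpha\in C\subset TX$; finally $v\in(G\oplus H)\cap TX=G$ and $C\cap G=0$ force $v=0$.

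Next I would choose a Lagrangian complement $V$ to $C$ inside $(E,\omega)$ (for instance $V=JC$ for a compatible complex structure on $E$), observe that $C,H,V,G$ are in direct sum (from $C\subset E$, $V\cap C=0$, and $E\cap(G\oplus H)=0$), extend by any complement $H''$ to a splitting $TM|_{X}=C\oplus H\oplus V\oplus G\oplus H''$, and set $W_{G,H}:=V\oplus G\oplus H''$. Since then $TM|_{X}=TX\oplus(H\oplus V\oplus H'')$, the argument from Step 4 of Lemma \ref{complement} gives $W_{G,H}\cap TX=G$. For the remaining property, take $\mu\in(H+W_{G,H})^{0}=H^{0}\cap V^{0}\cap G^{0}\cap(H'')^{0}$; from $\mu\in G^{0}\cap H^{0}$ we get $\pi^{\sharp}\mu\in E$, and for any $\pi^{\sharp}\alpha\in V$ (with $\alpha\in G^{0}\cap H^{0}$) we have $\omega(\pi^{\sharp}\mu,\pi^{\sharp}\alpha)=\pi(\mu,\alpha)=-\langle\mu,\pi^{\sharp}\alpha\rangle=0$ because $\mu\in V^{0}$; hence $\pi^{\sharp}\mu\in V^{\perp_{\omega}}=V\subset W_{G,H}$, so $\pi^{\sharp}\big((H+W_{G,H})^{0}\big)\subset V\subset W_{G,H}$.

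The main obstacle, exactly as in Lemma \ref{complement}, lies in the first step: pinning down the correct symplectic bundle $E$ and establishing $\ker\omega=0$, i.e.\ $E\cap(G\oplus H)=0$, together with the auxiliary identity $\pi^{\sharp}(TX^{0}\cap H^{0})=C$. Both hinge on the regularity hypothesis (keeping $\ker\pi^{\sharp}\cap TX^{0}$ of constant rank) and on careful use of the exact sequence \eqref{sequence}; once $E$ and its Lagrangian $C$ are available, building $W_{G,H}$ and verifying its two properties are routine transcriptions of the coisotropic case.
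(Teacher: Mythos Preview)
Your proof is correct and follows essentially the same four-step strategy as the paper: identify the symplectic vector bundle $\pi^{\sharp}\big((G+H)^{0}\big)$ of rank $2\,rk(TX^{\perp_{\pi}}\cap TX)$, show that $TX^{\perp_{\pi}}\cap TX$ is Lagrangian in it, pick a Lagrangian complement, and build $W_{G,H}$ by adjoining $G$ and an arbitrary further complement. The only cosmetic difference is that you route the nondegeneracy argument through the auxiliary identity $\pi^{\sharp}(TX^{0}\cap H^{0})=TX^{\perp_{\pi}}\cap TX$, whereas the paper argues $\pi^{\sharp}\big((G+H)^{0}\big)\cap(G+H)=0$ directly; both work.
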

	\begin{proof}
		We have in particular that
		\begin{equation}\label{prep}
			TX+TX^{\perp_{\pi}}=(TX^{\perp_{\pi}}\cap TX)\oplus G\oplus H.
		\end{equation} 
		The proof is divided into four steps. 
		
		\vspace{0.1cm}
		\noindent
		\underline{Step 1:} $\pi^{\sharp}\big((G+H)^{\circ}\big)$ has constant rank, equal to twice the rank of $TX^{\perp_{\pi}}\cap TX$.
		
		\vspace{0.1cm}
		\noindent
		Since $\ker\pi^{\sharp}\subset (TX^{\perp_{\pi}})^{\circ}\subset (TX^{\perp_{\pi}}\cap TX)^{\circ}$, we have
		\begin{align*}
			\ker\pi^{\sharp}\cap (G+H)^{\circ}&=\ker\pi^{\sharp}\cap (TX^{\perp_{\pi}}\cap TX)^{\circ}\cap (G+H)^{\circ}\\
			&=\ker\pi^{\sharp}\cap\big((TX^{\perp_{\pi}}\cap TX)+G+H\big)^{\circ}\\
			&=\ker\pi^{\sharp}\cap(TX+TX^{\perp_{\pi}})^{\circ}\\
			&=\ker\pi^{\sharp}\cap TX^{\circ}\cap (TX^{\perp_{\pi}})^{\circ}\\
			&=\ker\pi^{\sharp}\cap TX^{\circ}.
		\end{align*}
		Since $X$ is coregular, the latter has constant rank, which shows that also $\pi^{\sharp}\big((G+H)^{\circ}\big)$ has constant rank. Explicitly,
		\begin{align*}
			rk\left(\pi^{\sharp}\big((G+H)^{\circ}\big)\right)&=\dim M - rk(G+H)-rk\big(\ker\pi^{\sharp}\cap (G+H)^{\circ}\big)\\
			&=\dim M-rk(TX+TX^{\perp_{\pi}})+rk(TX^{\perp_{\pi}}\cap TX)-rk\big(\ker\pi^{\sharp}\cap TX^{\circ}\big)\\
			&=\dim M-rk(TX+TX^{\perp_{\pi}})+rk(TX^{\perp_{\pi}}\cap TX)\\
			&\hspace{0.5cm}-\big(\dim M-\dim X -rk(TX^{\perp_{\pi}})\big)\\
			&=rk(TX)+rk(TX^{\perp_{\pi}})-rk(TX+TX^{\perp_{\pi}})+rk(TX^{\perp_{\pi}}\cap TX)\\
			&=2 rk(TX^{\perp_{\pi}}\cap TX).
		\end{align*}
		
		\noindent
		\underline{Step 2:} $\big(\pi^{\sharp}\big((G+H)^{\circ}\big),\omega\big)$ is a symplectic vector bundle, where
		\[
		\omega(\pi^{\sharp}(\alpha),\pi^{\sharp}(\beta)):=\pi(\alpha,\beta).
		\]
		\noindent
		We first show that $\pi^{\sharp}\big((G+H)^{\circ}\big)\cap (G+H)=\{0\}$. Assume that $\gamma\in(G+H)^{\circ}$ is such that $\pi^{\sharp}(\gamma)=g+h\in G+H$. Since $h\in TX^{\perp_{\pi}}$, we can write $h=\pi^{\sharp}(\beta)$ for some $\beta\in TX^{\circ}$, and we obtain that $\pi^{\sharp}(\gamma-\beta)=g\in TX$. The exact sequence \eqref{sequence} then implies that $\gamma-\beta\in(TX^{\perp_{\pi}})^{\circ}$, and therefore $\gamma\in TX^{\circ}+(TX^{\perp_{\pi}})^{\circ}=(TX\cap TX^{\perp_{\pi}})^{\circ}$. Hence,
		\[
		\gamma\in(TX\cap TX^{\perp_{\pi}})^{\circ}\cap (G+H)^{\circ}=(TX+TX^{\perp_{\pi}})^{\circ}=TX^{\circ}\cap (TX^{\perp_{\pi}})^{\circ},
		\]
		using \eqref{prep} in the first equality. This implies that $\pi^{\sharp}(\gamma)\in TX^{\perp_{\pi}}\cap TX$, so we obtain that $\pi^{\sharp}(\gamma)\in(TX^{\perp_{\pi}}\cap TX)\cap(G+H)=\{0\}$. This shows that $\pi^{\sharp}\big((G+H)^{\circ}\big)\cap (G+H)=\{0\}$.
		
		It now follows that $\omega$ is non-degenerate: if $\pi^{\sharp}(\alpha)\in\ker\omega$ for $\alpha\in(G+H)^{\circ}$, then for all $\beta\in(G+H)^{\circ}$ we get $\langle\pi^{\sharp}(\alpha),\beta\rangle=0$, which shows that $\pi^{\sharp}(\alpha)\in G+H$. By what we just proved, we then get $\pi^{\sharp}(\alpha)\in\pi^{\sharp}\big((G+H)^{\circ}\big)\cap(G+H)=\{0\}$, which shows that $\omega$ is non-degenerate.
		
		\vspace{0.1cm}
		\noindent
		\underline{Step 3:} $TX^{\perp_{\pi}}\cap TX\subset\big(\pi^{\sharp}\big((G+H)^{\circ}\big),\omega\big)$ is a Lagrangian subbundle.
		
		\vspace{0.1cm}
		\noindent
		Since $G+H\subset TX+TX^{\perp_{\pi}}$, we have $(TX+TX^{\perp_{\pi}})^{\circ}\subset (G+H)^{\circ}$ and therefore
		\[
		TX^{\perp_{\pi}}\cap TX=\pi^{\sharp}\big(TX^{\circ}\cap(TX^{\perp_{\pi}})^{\circ}\big)=\pi^{\sharp}\big((TX+TX^{\perp_{\pi}})^{\circ}\big)\subset\pi^{\sharp}\big((G+H)^{\circ}\big).
		\]
		By Step 1, we know that the rank of $\pi^{\sharp}\big((G+H)^{\circ}\big)$ is twice the rank of $TX^{\perp_{\pi}}\cap TX$, so we only have to check that $TX^{\perp_{\pi}}\cap TX\subset\left(\pi^{\sharp}\big((G+H)^{\circ}\big),\omega\right)$ is an isotropic subbundle. This is clearly the case, for if $\alpha,\beta\in TX^{\circ}\cap(TX^{\perp_{\pi}})^{\circ}$ then
		\[
		\omega\big(\pi^{\sharp}(\alpha),\pi^{\sharp}(\beta)\big)=\langle \pi^{\sharp}(\alpha),\beta\rangle=0.
		\]
		Here we use that $\pi^{\sharp}(\alpha)\in TX$ since $\alpha\in \big(TX^{\perp_{\pi}}\big)^{\circ}$, and that $\beta\in TX^{\circ}$.
		
		\vspace{0.1cm}
		\noindent
		\underline{Step 4:} Let $C\subset\big(\pi^{\sharp}\big((G+H)^{\circ}\big),\omega\big)$ be a Lagrangian complement of $TX^{\perp_{\pi}}\cap TX$, and choose \hspace*{1.25cm}any subbundle $Y\subset TM|_{X}$ such that 
		\[
		TM|_{X}=(TX^{\perp_{\pi}}\cap TX)\oplus(H\oplus G\oplus C\oplus Y).
		\]
		\hspace*{1.25cm}Then the subbundle $W_{G,H}:=G\oplus C\oplus Y$ satisfies the criteria.
		
		\vspace{0.1cm}
		\noindent
		If $\alpha\in(H+G+C+Y)^{\circ}$, then $\alpha\in(G+H)^{\circ}$ and $\alpha\in C^{\circ}$. So for all $c\in C$, we get
		\[
		0=\langle \alpha,c\rangle=\omega\big(c,\pi^{\sharp}(\alpha)\big),
		\]
		which implies that $\pi^{\sharp}(\alpha)\in C^{\perp_{\omega}}=C\subset G+C+Y$. Therefore, $\pi^{\sharp}\big((H+W_{G,H})^{\circ}\big)\subset W_{G,H}$. The fact that $W_{G,H}\cap TX=G$ follows immediately from the decomposition
		\[
		TM|_{X}=(TX^{\perp_{\pi}}\cap TX)\oplus H\oplus W_{G,H}=TX\oplus H\oplus C\oplus Y.\qedhere
		\] 	
	\end{proof}
	
	Lemma \ref{compprep} implies that there is a splitting $TM|_{X}=(TX^{\perp_{\pi}}\cap TX)\oplus H\oplus W$, where 
	\[
	TX^{\perp_{\pi}}=(TX^{\perp_{\pi}}\cap TX)\oplus H\hspace{0.5cm}\text{and}\hspace{0.5cm}\pi^{\sharp}\big((H+W)^{\circ}\big)\subset W.
	\]
	Since $\pi\big((H+W)^{\circ},W^{\circ}\big)=0$, a local model for the local Poisson saturation of $X$ defined in terms of the complement $W$ can be constructed out of the data
	\begin{equation}\label{eq:quadruple}
		\Big(H,W,i^{!}L_{\pi},\big(W^{\circ}/(H+W)^{\circ},\pi\big)\Big).
	\end{equation}
	Interpreting the vector bundle $W^{\circ}/(H+W)^{\circ}$ as a well-defined version of the ``quotient'' $(TX^{\perp_{\pi}})^{*}/(TX^{\perp_{\pi}}\cap TX)^{*}$, we regard this fact as a Poisson analog of Marle's theorem.
	
	\begin{cor}[Poisson version of Marle's theorem]\label{marlepoisson}
		If $X\subset(M,\pi)$ is a coregular pre-Poisson submanifold, then a quadruple as in \eqref{eq:quadruple} determines a neighborhood of $X$ in its local Poisson saturation, up to Poisson diffeomorphism.
	\end{cor}
	
	The corollary shows that the local Poisson saturation of a coregular pre-Poisson submanifold is determined by less data than that of a general coregular submanifold, since it uses $\pi$ on a quotient of $W^{\circ}$ rather than on all of $W^{\circ}$.  The exception are those pre-Poisson submanifolds $X$ for which $TX^{\perp_{\pi}}\cap TX=0$; these are the coregular Poisson-Dirac submanifolds of $(M,\pi)$ (see \cite[\S 8.2]{Apaths} or \cite[\S 8.3]{bookpoiss}). They are studied in the recent work \cite{coregular}.
	
	
	\begin{remark}
		For the classes of coregular submanifolds $X\subset (M,\pi)$ considered in this section, we summarize loosely the data that determine the local Poisson saturation $(P,\pi_P)$ near $X$.
		\begin{center}
			\begin{tabular}{||c| c ||} 
				\hline
				Type of submanifold & $(P,\pi_P)$ locally determined by \\ [0.5ex] 
				\hline\hline
				$X\subset (M,\pi)$ Poisson transversal & $i^{!}L_{\pi}$ and $\pi|_{(TX^{\perp_{\pi}})^{*}}$\\
				\hline
				$X\subset (M,\pi)$ coregular coisotropic &  $i^{!}L_{\pi}$\\
				\hline
				$X\subset (M,\pi)$ coregular pre-Poisson &  $i^{!}L_{\pi}$ and $\pi|_{(TX^{\perp_{\pi}})^{*}/(TX^{\perp_{\pi}}\cap TX)^{*}}$\\
				\hline
			\end{tabular}
		\end{center}
	\end{remark}

	\section{Coisotropic embeddings of Dirac manifolds in Poisson manifolds}\label{sec:fifth}
	
	As an application of Corollary \ref{coiso}, we look at the following question, which was considered by Cattaneo and Zambon \cite{marco} and by Wade \cite{wade}: Given a Dirac manifold $(X,L)$, when can it be embedded coisotropically into a Poisson manifold $(M,\pi)$? That is, when does there exist an embedding $i:X\hookrightarrow (M,\pi)$ such that $i^{!}L_{\pi}=L$ and $i(X)$ is coisotropic in $(M,\pi)$? Moreover, to what extent is such an embedding unique?
	
	The question on the existence of coisotropic embeddings $(X,L)\hookrightarrow (M,\pi)$ is settled in \cite[Theorem 8.1]{marco}: such an embedding exists exactly when $L\cap TX$ has constant rank. The construction of $(M,\pi)$ in that case is carried out as follows: a choice of complement $V$ to $L\cap TX$ in $TX$ gives an inclusion $j:(L\cap TX)^{*}\hookrightarrow T^{*}X$, one takes $M$ to be the total space of the vector bundle $\mathrm{pr}:(L\cap TX)^{*}\rightarrow X$ and one shows that the Dirac structure
	$
	(\mathrm{pr}^{!}L)^{j^{*}\omega_{can}}
	$
	on $M$ is in fact Poisson on a neighborhood of $X\subset M$. 
	A different proof of the existence result is given in \cite[Theorem 4.1]{wade}.
	
	The question on the uniqueness of coisotropic embeddings $(X,L)\hookrightarrow (M,\pi)$ is still open. In \cite{wade}, it is claimed (without proof) that uniqueness can be obtained if $L\cap TX$ defines a simple foliation on $X$. In \cite{marco} it is conjectured that, if $(X,L)$ is embedded coisotropically  in two different Poisson manifolds, then these must be neighborhood equivalent around $X$, provided that they are of minimal dimension $\dim X + rk(L\cap TX)$. However, a proof of this uniqueness statement is only given under the additional regularity assumption that the presymplectic leaves of $(X,L)$ have constant dimension \cite[Proposition 9.4]{marco}. 
	
	We now show that this extra assumption can be dropped. Using Corollary \ref{coiso}, we prove that the model $\big(U,(\mathrm{pr}^{!}L)^{j^{*}\omega_{can}}\big)$ constructed in \cite{marco} is minimal, thereby obtaining the uniqueness result in full generality. In the proof below, given an embedding $i:X\hookrightarrow(M,\pi)$, we may assume that it is the inclusion map by identifying $X$ with $i(X)$.
	
	\begin{prop}
		Let $(X,L)$ be a Dirac manifold for which $L\cap TX$ has constant rank, and denote by $\mathrm{pr}:(L\cap TX)^{*}\rightarrow X$ the bundle projection.
		\begin{enumerate}[i)]
			\item Any coisotropic embedding $i:(X,L)\hookrightarrow (M,\pi)$ into a Poisson manifold $(M,\pi)$ factors through the local model $\big(U,(\mathrm{pr}^{!}L)^{j^{*}\omega_{can}}\big)$. That is, we have a diagram
			\[
			\begin{tikzcd}[row sep= large, column sep=large]
				(X,L)\arrow[r,"i"]\arrow[d,hookrightarrow] & (M,\pi)\\
				\big(U,(\mathrm{pr}^{!}L)^{j^{*}\omega_{can}}\big)\arrow[ru,hookrightarrow,"\psi"] & 
			\end{tikzcd},
			\]
			where $\psi:\big(U,(\mathrm{pr}^{!}L)^{j^{*}\omega_{can}}\big)\hookrightarrow (M,\pi)$ is a Poisson embedding.
			\item In particular, if $(M_{1},\pi_{1})$ and $(M_{2},\pi_{2})$ are Poisson manifolds of minimal dimension $\dim X + rk(L\cap TX)$ in which $(X,L)$ embeds coisotropically, then $(M_{1},\pi_{1})$ and $(M_{2},\pi_{2})$ are Poisson diffeomorphic around $X$.	
		\end{enumerate}
	\end{prop}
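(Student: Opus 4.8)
The plan is to reduce both statements to Corollary~\ref{coiso} (the Poisson version of Gotay's theorem), once we observe that a coisotropic embedding $i\colon(X,L)\hookrightarrow(M,\pi)$ presents $i(X)$ as a regular coisotropic submanifold whose local Poisson saturation has precisely $\big(U,(pr^*L)^{j^*\omega_{can}}\big)$ as its local model. Throughout I identify $X$ with $i(X)$, so that $i$ is the inclusion and $i^*L_\pi=L$.

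First I would check that $X$ is a \emph{regular} coisotropic submanifold of $(M,\pi)$ and that the model above is literally one of the models of Corollary~\ref{coiso}. Coisotropy is given, so $TX^{\perp_\pi}\subset TX$. A pair $(v,0)\in T_xX\oplus T_x^*X$ lies in $L=i^*L_\pi$ exactly when $v=\pi^\sharp(\alpha)$ for some $\alpha\in TX^0$, i.e. exactly when $v\in\pi^\sharp(TX^0)=TX^{\perp_\pi}$ (and such $v$ automatically lie in $TX$, by coisotropy). Hence the projection onto the first factor identifies $L\cap TX$ with $TX^{\perp_\pi}$, so the constant-rank hypothesis on $L\cap TX$ is exactly regularity of $X$; moreover under this identification a complement $V$ to $L\cap TX$ in $TX$ is a complement to $TX^{\perp_\pi}$ in $TX$, the induced inclusion $j\colon(L\cap TX)^*\hookrightarrow T^*X$ is the inclusion used in Corollary~\ref{coiso}, and $pr^*(i^*L_\pi)=pr^*L$. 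Thus $\big(U,(pr^*L)^{j^*\omega_{can}}\big)$ is the local model of Corollary~\ref{coiso} for the local Poisson saturation $(P,\pi_P)$ of $X\subset(M,\pi)$.

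With this in hand, part i) is immediate: Corollary~\ref{coiso} supplies a Poisson diffeomorphism $\psi$ from a neighborhood $U$ of $X$ in $\big((L\cap TX)^*,(pr^*L)^{j^*\omega_{can}}\big)$ onto a neighborhood of $X$ in $(P,\pi_P)$, restricting to the identity along $X$. Since $P$ is an embedded Poisson submanifold of $(M,\pi)$ by Theorem~\ref{satsmooth}, the inclusion $(P,\pi_P)\hookrightarrow(M,\pi)$ is a Poisson map, so $\psi$ becomes a Poisson embedding into $(M,\pi)$; and $\psi$ fixing $X$ pointwise says exactly that $\psi$ restricted to the zero section equals $i$, which is the commuting triangle. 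For part ii) I would first note, using equation~\eqref{normal} together with the injectivity of $(v,\xi)\mapsto v+\pi^\sharp(\xi)$ on $TX\oplus(TX^{\perp_\pi})^*$ shown in the proof of Theorem~\ref{satsmooth}, that $\dim P=\dim X+rk(TX^{\perp_\pi})=\dim X+rk(L\cap TX)$. Hence if $(M_k,\pi_k)$, $k=1,2$, has the minimal dimension $\dim X+rk(L\cap TX)$, then $P_k\subset M_k$ is an embedded submanifold of full dimension, hence open in $M_k$, so the map $\psi_k$ from part i) is a Poisson diffeomorphism from $U_k$ onto an open neighborhood of $X$ in $M_k$. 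Fixing once and for all a complement $V$ to $L\cap TX$ in $TX$, which is intrinsic to $(X,L)$, both $\psi_1$ and $\psi_2$ are defined on open neighborhoods of $X$ in the \emph{same} model, so $\psi_2\circ\psi_1^{-1}$, suitably restricted, is a Poisson diffeomorphism between neighborhoods of $X$ in $M_1$ and in $M_2$.

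Since Corollary~\ref{coiso} does all the heavy lifting, there is no genuine obstacle; the one point demanding care is the bookkeeping in part ii) --- verifying that minimality of $\dim M_k$ is exactly what forces the local Poisson saturation to be open, which hinges on the dimension count $\dim P=\dim X+rk(L\cap TX)$ produced by the identification $L\cap TX\cong TX^{\perp_\pi}$ in the first step. If one prefers to use different complements for $M_1$ and $M_2$, one simply inserts the isomorphism of the two resulting local models supplied by Proposition~\ref{independent}.
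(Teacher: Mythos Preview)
Your proposal is correct and follows essentially the same route as the paper: both arguments identify $TX^{\perp_\pi}$ with $L\cap TX$ via the description of $i^*L_\pi$, conclude that $X\subset(M,\pi)$ is regular coisotropic, invoke Corollary~\ref{coiso} to produce the Poisson embedding $\psi$ into the local Poisson saturation $(P,\pi_P)\hookrightarrow(M,\pi)$, and then for part~ii) use the dimension count $\dim P=\dim X+rk(L\cap TX)$ from~\eqref{normal} to deduce that $P$ is open in $M_l$ so that $\psi_2\circ\psi_1^{-1}$ gives the desired Poisson diffeomorphism. You spell out a couple of points (e.g.\ that $\psi$ fixes $X$ pointwise, and the option of invoking Proposition~\ref{independent} if different complements are used) that the paper leaves implicit, but the structure of the argument is the same.
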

	\begin{proof}
		\begin{enumerate}[i)]
			\item The assumptions imply that $X\subset (M,\pi)$ is a coregular coisotropic submanifold, since
			\begin{equation}\label{perp}
				TX^{\perp_{\pi}}=\pi^{\sharp}(TX^{\circ})=(i^{!}L_{\pi})\cap TX=L\cap TX.
			\end{equation}
			Denote by $(P,\pi_{P})$ the local Poisson saturation of $X\subset (M,\pi)$. By Corollary \ref{coiso}, there is a neighborhood $U\subset (L\cap TX)^{*}$ of $X$ and a Poisson embedding 
			\[
			\phi:\big(U,(\mathrm{pr}^{!}L)^{j^{*}\omega_{can}}\big)\rightarrow (P,\pi_{P}).
			\]
			Since $(P,\pi_{P})$ is an embedded submanifold of $(M,\pi)$, this proves the statement.
			\item By what we just proved, there exist a neighborhood $U\subset (L\cap TX)^{*}$ of $X$ and two Poisson embeddings
			\begin{align*}
				&\phi_{1}:\left(U,(\mathrm{pr}^{!}L)^{j^{*}\omega_{can}}\right)\rightarrow (P_{1},\pi_{P_{1}}),\\
				&\phi_{2}:\left(U,(\mathrm{pr}^{!}L)^{j^{*}\omega_{can}}\right)\rightarrow (P_{2},\pi_{P_{2}}),
			\end{align*}
			where $(P_{1},\pi_{P_{1}})$ and $(P_{2},\pi_{P_{2}})$ denote the local Poisson saturations of $X$ in $(M_{1},\pi_{1})$ and $(M_{2},\pi_{2})$, respectively. The assumption implies that, for $l=1,2$:
			\[
			\dim P_{l}=\dim TX^{\perp_{\pi_l}}=\dim X + rk(L\cap TX)=\dim M_{l},
			\]
			where we used \eqref{perp}. Since $P_{l}\subset M_{l}$ is an embedded submanifold, this shows that $P_{l}\subset M_{l}$ is a neighborhood of $X$, for $l=1,2$. So the composition $\phi_{2}\circ\phi_{1}^{-1}$ is a Poisson diffeomorphism between neighborhoods of $X$ in $(M_{1},\pi_{1})$ and $(M_{2},\pi_{2})$. 
		\end{enumerate}
	\end{proof}
	
	\section{Coregular submanifolds in Dirac geometry}\label{sec:sixth}
	We now discuss how the results that we obtained in Sections \ref{sec:first}, \ref{sec:second} and \ref{sec:third} can be generalized to the setting of Dirac manifolds. The relevant tools are developed in \cite{dirac}, from which we adopt the terminology and notation. For background on Dirac geometry, see e.g. \cite{bursztyn}. 
	
	\begin{defi}
		We call an embedded submanifold $X$ of a Dirac manifold $(M,L)$ \textbf{coregular} if the map $\overline{\mathrm{pr}_{T}}:L|_{X}\rightarrow TM|_{X}/TX$, which is obtained composing the anchor $\mathrm{pr}_{T}:L\rightarrow TM$ with the projection to the normal bundle, has constant rank.
	\end{defi}
	
	Given any submanifold $i:X\hookrightarrow(M,L)$, we have at points $x\in X$ that
	\[
	\overline{\mathrm{pr}_{T}}(L_x)=\frac{\mathrm{pr}_{T}(L_x)+T_{x}X}{T_{x}X},
	\]
	and therefore
	\begin{align*}
		X\subset (M,L)\ \text{is coregular}&\Leftrightarrow \mathrm{pr}_{T}(L)+TX\ \text{has constant rank}\\
		&\Leftrightarrow \ker((\mathrm{d}i)^{*})\cap L\ \text{has constant rank},
	\end{align*}
	using that $\ker((\mathrm{d}i)^{*})\cap L=(\mathrm{pr}_{T}(L)+TX)^{\circ}$. In particular, the Dirac structure $L$ automatically induces a Dirac structure on a coregular submanifold $X\subset(M,L)$ \cite[Prop. 1.10]{bursztyn}.
	
	\vspace{0.2cm}
	
	We recall some results about sprays and dual pairs in Dirac geometry \cite{dirac}.
	
	\begin{defi}
		Let $L\subset TM\oplus T^{*}M$ be a Dirac structure on $M$, and let $\textbf{s}:L\rightarrow M$ denote the bundle projection. A \textbf{spray} for $L$ is a vector field $\mathcal{V}\in\mathfrak{X}(L)$ satisfying
		\begin{enumerate}[i)]
			\item $\mathrm{d}\textbf{s}(\mathcal{V}_{a})=\mathrm{pr}_{T}(a)$ for all $a\in L$,
			\item $m_{t}^{*}\mathcal{V}=t\mathcal{V}$, where $m_{t}:L\rightarrow L$ denotes fiberwise multiplication by $t\neq 0$.
		\end{enumerate}
	\end{defi}
	
	Sprays exist on any Dirac structure. Condition ii) implies that the spray $\mathcal{V}$ vanishes along the zero section $M\subset L$, and therefore there exists a neighborhood $\Sigma\subset L$ of $M$ on which the flow $\varphi_{\epsilon}$ of $\mathcal{V}$ is defined for all times $\epsilon\in[0,1]$. We can then define the \textbf{spray exponential} associated with $\mathcal{V}$ as
	\[
	\exp_{\mathcal{V}}:\Sigma\rightarrow M:a\mapsto\textbf{s}(\varphi_{1}(a)).
	\]
	Moreover, this neighborhood $\Sigma\subset L$ supports a two-form $\omega$ defined by
	\begin{equation}\label{eq:sprayform}
		\omega:=\int_{0}^{1}\varphi_{\epsilon}^{*}\big((\mathrm{pr}_{T^{*}})^{*}\omega_{can}\big)\mathrm{d}\epsilon,
	\end{equation}
	where $\mathrm{pr}_{T^{*}}:L\rightarrow T^{*}M$ is the projection and $\omega_{can}$ is the canonical symplectic form on $T^{*}M$. It is proved in \cite{dirac} that, shrinking $\Sigma\subset L$ if necessary, these data fit into a \textbf{Dirac dual pair}:
	\begin{equation}\label{dualpair}
		\begin{tikzcd}
			(M,L) & (\Sigma,\text{Gr}(\omega))\arrow[l,"\textbf{s}", swap]\arrow{r}{\exp_{\mathcal{V}}} & (M,-L).
		\end{tikzcd}
	\end{equation}
	This means that both legs in the diagram \eqref{dualpair} are surjective, forward Dirac submersions, and we have the additional requirements that $\omega(V,W)=0$ and $V\cap K\cap W=0$, where $V=\ker \mathrm{d}\textbf{s}, W=\ker \mathrm{d}\exp_{\mathcal{V}}$ and $K=\ker\omega$.	
	
	We need the following lemma, which is a Dirac substitute for Lemma \ref{realization}. The statement is not exactly the Dirac analog of Lemma \ref{realization}; we address this in Remark \ref{presym} below.
	
	\begin{lemma}\label{rank}
		Consider a Dirac dual pair
		\begin{equation*}
			\begin{tikzcd}[row sep=large, column sep=large]
				(M_0,L_0) & (\Sigma,\text{Gr}(\omega))\arrow[l,"\textbf{s}", swap]\arrow{r}{\textbf{t}} & (M_1,-L_1),
			\end{tikzcd}
		\end{equation*}
		and let $X\subset(M_0,L_0)$ be a coregular submanifold. We denote $V:=\ker \mathrm{d}\textbf{s}$, $W:=\ker \mathrm{d}\textbf{t}$ and $K:=\ker\omega$. Then $W\cap \mathrm{d}\textbf{s}^{-1}(TX)$ has constant rank, equal to the rank of $\mathrm{pr}_{T}^{-1}(TX)\subset L_0|_{X}$.	
	\end{lemma}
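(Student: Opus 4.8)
The plan is to imitate the proof of Lemma~\ref{realization}, with one extra difficulty: here $\omega$ need not be symplectic, so I cannot simply invert it. Instead I will lean on the two defining conditions $\omega(V,W)=0$ and $V\cap K\cap W=0$ of a Dirac dual pair, together with the fact that $\textbf{s}$ is a forward Dirac submersion, to reduce everything to a dimension count at a single point. Throughout I fix $a\in\textbf{s}^{-1}(X)$, put $x:=\textbf{s}(a)$, and abbreviate the fibers as $V_a,W_a,K_a$; I also use that $\textbf{s}$ a submersion gives $T_a(\textbf{s}^{-1}(X))=\textbf{s}_*^{-1}(T_xX)$.

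The first ingredient is the linear-algebra identity $V_a^{\perp_{\omega}}=W_a\oplus(V_a\cap K_a)$. Indeed $W_a\subseteq V_a^{\perp_{\omega}}$ by $\omega(V,W)=0$, one always has $V_a\cap K_a\subseteq V_a^{\perp_{\omega}}$, the sum is direct because $V_a\cap K_a\cap W_a=0$, and equality follows from the dimension balance $\dim V+\dim W=\dim\Sigma$ valid for a Dirac dual pair (both legs are submersions; cf.\ \cite{dirac}), which forces $\dim W_a=\dim V_a^{\perp_{\omega}}-\dim(V_a\cap K_a)$. Intersecting with $V_a$ gives $V_a\cap V_a^{\perp_{\omega}}=(W_a\cap V_a)\oplus(V_a\cap K_a)$. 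The second ingredient is that, $\textbf{s}$ being a forward Dirac submersion, $(L_0)_x$ is the forward image of $\text{Gr}(\omega)_a$, i.e.
\[
(L_0)_x=\big\{(\textbf{s}_*u,\xi):\ u\in V_a^{\perp_{\omega}},\ \iota_u\omega_a=\textbf{s}^*\xi\big\}.
\]
From this I would read off two facts: first, $pr_T\big((L_0)_x\big)=\textbf{s}_*(V_a^{\perp_{\omega}})=\textbf{s}_*(W_a)$, the last equality because $V_a\cap K_a\subseteq\ker\textbf{s}_*$; second, the assignment $u\mapsto(\textbf{s}^*)^{-1}(\iota_u\omega_a)$ is a surjection $V_a\cap V_a^{\perp_{\omega}}\twoheadrightarrow(L_0)_x\cap T_x^*M_0$ with kernel $V_a\cap K_a$. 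Combining the second fact with the identity above yields the key equality $\dim(W_a\cap V_a)=\dim\big((L_0)_x\cap T_x^*M_0\big)=\dim\ker\big(pr_T|_{(L_0)_x}\big)$.

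With these in hand the proof is a comparison of two counts. On one side, $\textbf{s}_*$ restricts to a surjection $W_a\cap\textbf{s}_*^{-1}(T_xX)\twoheadrightarrow pr_T((L_0)_x)\cap T_xX$ (using $\textbf{s}_*(W_a)=pr_T((L_0)_x)$) with kernel $W_a\cap V_a$, so $\dim\big(W_a\cap\textbf{s}_*^{-1}(T_xX)\big)=\dim(W_a\cap V_a)+\dim\big(pr_T((L_0)_x)\cap T_xX\big)$. On the other side, the anchor gives an exact sequence $0\to(L_0)_x\cap T_x^*M_0\to pr_T^{-1}(T_xX)\to pr_T((L_0)_x)\cap T_xX\to0$, whence $\dim pr_T^{-1}(T_xX)=\dim\big((L_0)_x\cap T_x^*M_0\big)+\dim\big(pr_T((L_0)_x)\cap T_xX\big)$. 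By the key equality the two right-hand sides agree, so $\dim\big(W_a\cap\textbf{s}_*^{-1}(T_xX)\big)=\dim pr_T^{-1}(T_xX)$ for every $a\in\textbf{s}^{-1}(X)$. Since $pr_T^{-1}(TX)=\ker\overline{pr_T}$ has constant rank over $X$ precisely because $X$ is regular, this shows $W\cap\textbf{s}_*^{-1}(TX)$ has constant rank equal to $rk\,pr_T^{-1}(TX)$, as claimed.

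The step I expect to be the real obstacle is the bookkeeping underlying the two structural identities above: everything must be carried out with a genuinely degenerate $\omega$, so one has to track $\omega$-orthogonals, their intersections with the global kernel $K$, and $\ker\textbf{s}_*$ at once, and one must confirm that the splitting $V^{\perp_{\omega}}=W\oplus(V\cap K)$ and the dimension balance $\dim V+\dim W=\dim\Sigma$ are legitimately available for the Dirac dual pairs under consideration, which I would pin down by reference to \cite{dirac}. Once those are secured, the rest is the elementary rank count above.
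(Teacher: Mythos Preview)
Your proof is correct, but it organizes the argument differently from the paper. The paper builds a single vector bundle isomorphism in one stroke: it defines $R_\omega:W\to T\Sigma\oplus T^*\Sigma$, $w\mapsto w+\iota_w\omega$, and then $\psi:R_\omega(W)\to L_0$, $w+\iota_w\omega\mapsto \mathbf{s}_*(w)+\beta$ with $\mathbf{s}^*\beta=\iota_w\omega$; well-definedness uses $\omega(V,W)=0$, injectivity uses $V\cap K\cap W=0$, and surjectivity uses $\mathrm{rk}(W)=\dim\Sigma-\dim M_1=\dim M_0=\mathrm{rk}(L_0)$. Since $\psi\circ R_\omega$ intertwines $\mathbf{s}_*$ on $W$ with $pr_T$ on $L_0$, the kernels of $\overline{\mathbf{s}_*}$ and $\overline{pr_T}$ match, and the rank statement drops out immediately from one commuting square.

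Your route instead unpacks this isomorphism into its components: the splitting $V^{\perp_\omega}=W\oplus(V\cap K)$ (which the paper also invokes later, citing \cite[Lemma~3]{dirac}), the identification $\mathbf{s}_*(W)=pr_T(L_0)$, and the equality $\dim(W\cap V)=\dim(L_0\cap T^*M_0)$, and then runs two short exact sequences in parallel. Each of your intermediate identities is exactly what $\psi\circ R_\omega$ does on the relevant subspace, so the two arguments are really the same content viewed from different angles. The paper's packaging is slicker---one map, one diagram---while yours makes the linear algebra more visible. The dimension balance $\dim V+\dim W=\dim\Sigma$ you flag is indeed used by the paper as well (in the form $\dim\Sigma-\dim M_1=\dim M_0$), so that concern is shared and is legitimately part of the Dirac dual pair setup from \cite{dirac}.
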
	
	\begin{proof}
		Consider the following diagram of vector bundle maps:
		\begin{equation}\label{diag2}
			\begin{tikzcd}[row sep=large, column sep=large]
				W|_{\textbf{s}^{-1}(X)}\arrow{r}{\overline{\mathrm{d}\textbf{s}}}\arrow[d,"R_{\omega}",swap] & TM_0|_{X}/TX\\
				R_{\omega}(W|_{\textbf{s}^{-1}(X)})\arrow[r,"\psi",swap]&L_0|_{X}\arrow[u,"\overline{\mathrm{pr}_{T}}",swap]
			\end{tikzcd}.
		\end{equation}
		Here $R_{\omega}$ is an injective bundle map defined by $R_{\omega}:W\rightarrow T\Sigma\oplus T^{*}\Sigma:w\mapsto w+\iota_{w}\omega$. The map $\psi:R_{\omega}(W)\rightarrow L_0$ is defined by setting $\psi(w+\iota_{w}\omega):=\mathrm{d}\textbf{s}(w)+\beta$, where $\beta$ is uniquely determined by the relation $\mathrm{d}\textbf{s}^{*}(\beta)=\iota_{w}\omega$. Note that $\psi$ is well-defined: existence of $\beta$ follows from the fact that $\omega(V,W)=0$, and $\beta$ is unique since $\textbf{s}$ is a submersion. Since the map $\textbf{s}:(\Sigma,\text{Gr}(\omega))\rightarrow (M_0,L_0)$ is forward Dirac, $\psi(w+\iota_{w}\omega)=\mathrm{d}\textbf{s}(w)+\beta$ is contained in $L_0$. 
		
		Moreover, we claim that the map $\psi$ is an isomorphism. To see that $\psi$ is injective, assume that $\psi(w+\iota_{w}\omega)=\mathrm{d}\textbf{s}(w)+\beta=0$ for some $w\in W$. Then $\beta=0$, and therefore $\iota_{w}\omega=\mathrm{d}\textbf{s}^{*}(\beta)=0$, so that $w\in W\cap K$. But also $\mathrm{d}\textbf{s}(w)=0$, so that $w\in V$. Hence $w\in V\cap K\cap W=0$, which shows that $\psi$ is injective. Since the rank of $R_{\omega}(W)$ is given by
		\[
		rk(R_{\omega}(W))=rk(W)=\dim\Sigma-\dim M_1=\dim M_0=rk(L_0),
		\]
		it follows that $\psi:R_{\omega}(W)\rightarrow L_0$ is a vector bundle isomorphism. Since the diagram \eqref{diag2} commutes, it follows that 
		\begin{align*}
			rk\big(\overline{\mathrm{d}\textbf{s}}:W|_{\textbf{s}^{-1}(X)}\rightarrow TM_0|_{X}/TX\big)&=rk\big(\overline{\mathrm{pr}_{T}}:L_0|_{X}\rightarrow TM_0|_{X}/TX\big)\\
			&=\dim M_0-rk\big(\mathrm{pr}_{T}^{-1}(TX)\big).
		\end{align*}
		Consequently, we obtain that
		\[
		rk\big(W\cap \mathrm{d}\textbf{s}^{-1}(TX)\big)=rk(W)-\dim M_0 +rk\big(\mathrm{pr}_{T}^{-1}(TX)\big)=rk\big(\mathrm{pr}_{T}^{-1}(TX)\big),
		\]
		which finishes the proof of the lemma.
	\end{proof}
	
	\begin{remark}\label{presym}
		For completeness, we state here the Dirac geometric analog of Lemma \ref{realization}. Recall that a forward Dirac map $\varphi:(M_{0},L_{0})\rightarrow (M_{1},L_{1})$ is \textbf{strong} if $L_{0}\cap\ker \mathrm{d}\varphi=0$. When $L_0$ is the graph of a closed $2$-form, then the map $\varphi$ is called a \textbf{presymplectic realization} of $(M_{1},L_{1})$. One can show that the following is true:
		
		``\textit{Let $\textbf{s}:(\Sigma,\text{Gr}(\omega))\rightarrow (M,L)$ be a strong forward Dirac submersion, and assume that $X\subset(M,L)$ is a coregular submanifold. If $V:=\ker \mathrm{d}\textbf{s}$, then $V^{\perp_{\omega}}\cap  \mathrm{d}\textbf{s}^{-1}(TX)$ has constant rank, equal to the rank of $\mathrm{pr}_{T}^{-1}(TX)$}.''
		
		We won't address this in more detail, since we want to use the legs of the diagram \eqref{dualpair} and these are in general not presymplectic realizations. Indeed, using expressions for $\omega|_{M}$ that appear in \cite{dirac}, one can check that
		\begin{align*}
			&(\text{Gr}(\omega)\cap\ker \mathrm{d}\textbf{s})|_{M}=0\oplus L\cap TM\subset TM\oplus L,\\
			&(\text{Gr}(\omega)\cap\ker \mathrm{d}\exp_{\mathcal{V}})|_{M}=\{(-v,v):v\in L\cap TM\}\subset TM\oplus L,
		\end{align*}
		so that both legs are presymplectic realizations only when the Dirac structure $L$ is Poisson. In that case, $\omega$ is non-degenerate along $M\subset\Sigma$, so that shrinking $\Sigma$ if necessary, the diagram \eqref{dualpair} is a full dual pair. In particular, the legs of the diagram \eqref{dualpair} are symplectic realizations.
	\end{remark}	
	
	We obtain the following generalization of Theorem \ref{satsmooth}. 
	
	\begin{thm}\label{submanifold}
		Let $X\subset(M,L)$ be a coregular submanifold. 
		\begin{enumerate}
			\item There exists an embedded invariant submanifold $(P,L_P)\subset (M,L)$ containing $X$ that lies inside the saturation $Sat(X)$. 
			\item Shrinking $P$ if necessary, there exists a neighborhood $U$ of $X$ in $M$ such that $(P,L_{P})$ is the saturation of $X$ in $(U,L|_{U})$.
		\end{enumerate}
	\end{thm}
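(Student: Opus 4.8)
The plan is to run the four-step argument of Theorem~\ref{satsmooth} in the Dirac setting, using a Dirac spray $\mathcal{V}\in\mathfrak{X}(L)$, its exponential map $\exp_{\mathcal{V}}\colon\Sigma\subset L\to M$ and the Dirac dual pair \eqref{dualpair} in place of the Poisson spray and the self-dual pair \eqref{pair}, and Lemma~\ref{rank} in place of Lemma~\ref{realization}. First I would show $\exp_{\mathcal{V}}(\Sigma|_X)\subset Sat(X)$: for $a\in\Sigma|_X$ the curve $\epsilon\mapsto\textbf{s}(\varphi_{\epsilon}(a))$ is covered by the $L$-path $\epsilon\mapsto\varphi_{\epsilon}(a)$ (by property i) of a spray), so $\textbf{s}(a)\in X$ and $\exp_{\mathcal{V}}(a)$ lie on the same presymplectic leaf. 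Next, choose a complement $D$ to $pr_{T}^{-1}(TX)$ inside $L|_X$ --- the Dirac counterpart of the subbundle $(TX^{\perp_{\pi}})^{*}\subset T^{*}M|_X$ --- so that the restriction $\exp_{\mathcal{V}}\colon D\cap\Sigma|_X\to M$ fixes $X$ pointwise and has differential $(v,a)\mapsto v+pr_{T}(a)$ along $X$ (by the formula in \cite{dirac}), which is injective since $D$ is transverse to $pr_{T}^{-1}(TX)$. Theorem~\ref{embedding} and Remark~\ref{nbhd} then make $\exp_{\mathcal{V}}|_{D\cap\Sigma|_X}$ an embedding after shrinking $\Sigma$, and I set $P:=\exp_{\mathcal{V}}(D\cap\Sigma|_X)$, an embedded submanifold of $M$ containing $X$ and lying in $Sat(X)$.

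Applying Lemma~\ref{rank} to the dual pair \eqref{dualpair} shows that $\ker\big(d(\exp_{\mathcal{V}}|_{\Sigma|_X})\big)=\ker(\exp_{\mathcal{V}})_{*}\cap\textbf{s}_{*}^{-1}(TX)$ has constant rank $rk\big(pr_{T}^{-1}(TX)\big)$, hence $\exp_{\mathcal{V}}|_{\Sigma|_X}$ has constant rank $\dim X+rk(\overline{pr_{T}})$, which is exactly the rank of $\exp_{\mathcal{V}}|_{D\cap\Sigma|_X}$; the local argument of Theorem~\ref{satsmooth} (constant rank plus the inverse function theorem) then gives $P=\exp_{\mathcal{V}}(\Sigma|_X)$. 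To see that $P$ is invariant I would prove $pr_{T}(L_p)\subset T_pP$ for $p=\exp_{\mathcal{V}}(a)$, $a\in\Sigma|_X$: since $\exp_{\mathcal{V}}\colon(\Sigma,\text{Gr}(\omega))\to(M,-L)$ is forward Dirac one has $pr_{T}(L_p)=(d\exp_{\mathcal{V}})_{a}\big(W_{a}^{\perp_{\omega}}\big)$, where $W:=\ker(\exp_{\mathcal{V}})_{*}$, so it suffices to check $W_{a}^{\perp_{\omega}}\subset T_{a}(\Sigma|_X)+W_{a}$; this follows from $\ker\textbf{s}_{*}\subset T_{a}(\Sigma|_X)$ (the $\textbf{s}$-fibers over points of $X$ lie in $\Sigma|_X$) together with the dual pair relations $W^{\perp_{\omega}}=\ker\textbf{s}_{*}+\ker\omega$ and $\ker\omega=(\ker\textbf{s}_{*}\cap\ker\omega)\oplus(W\cap\ker\omega)$. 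Then $L_P:=i_{P}^{*}L$ is a smooth Dirac structure by \cite[Prop. 5.6]{bursztyn}, which completes part (1).

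For part (2), exactly as in Theorem~\ref{satsmooth} I would extend $\exp_{\mathcal{V}}|_{D\cap\Sigma|_X}$ to a diffeomorphism $\psi$ from a convex neighborhood of the zero section in $D\oplus C$ --- for a complement $C$ to $TP|_X$ in $TM|_X$ --- onto an open set $U\supset X$, via a linear connection as in the proof of Proposition~\ref{embedding}, arranged so that $\psi(\xi,0)=\exp_{\mathcal{V}}(\xi)$ and $P=\psi(D\oplus 0)$. Then $P$ is closed in $U$, so since properly embedded invariant submanifolds are saturated \cite{models}, the saturation of $X$ in $(U,L|_U)$ is contained in $P$; conversely each $\exp_{\mathcal{V}}(\xi)\in P$ is joined to $\textbf{s}(\xi)\in X$ by the $L$-path $t\mapsto\varphi_{t}(\xi)$, which covers the path $t\mapsto\exp_{\mathcal{V}}(t\xi)$, and this path stays in $U$ by convexity, so $\exp_{\mathcal{V}}(\xi)$ lies in the saturation of $X$ in $(U,L|_U)$.

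The main obstacle is the invariance step. Unlike the Poisson case, where the self-dual pair \eqref{pair} directly yields $\ker(\exp_{\chi})_{*}=(\ker dpr)^{\perp_{\Omega_{\chi}}}$, the two-form $\omega$ attached to the Dirac dual pair \eqref{dualpair} is generically degenerate --- its kernel along $M$ being $0\oplus(L\cap TM)$, see Remark~\ref{presym} --- so $W=\ker(\exp_{\mathcal{V}})_{*}$ is strictly smaller than $(\ker\textbf{s}_{*})^{\perp_{\omega}}$ and one cannot argue by symplectic orthogonality alone. The extra input is precisely the $\omega$-orthogonality relations for \eqref{dualpair} quoted above, which I would verify along $M$ from the local expressions for $\omega|_M$ in \cite{dirac} and then propagate to a neighborhood of $M$ by a rank count, shrinking $\Sigma$ if necessary.
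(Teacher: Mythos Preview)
Your proposal is correct and follows essentially the same four-step argument as the paper's proof, including the key identity $pr_{T}(L_{\exp_{\mathcal{V}}(a)})=(d\exp_{\mathcal{V}})_{a}(W^{\perp_{\omega}})$ and the reduction $W^{\perp_{\omega}}=V+W\cap K\subset T(\Sigma|_X)+W$ for invariance. Your final paragraph is overly cautious: the relations $W^{\perp_{\omega}}=V+W\cap K$ and $V^{\perp_{\omega}}=W+V\cap K$ (hence your decomposition of $K$) are established on all of $\Sigma$ in \cite[Lemma~3]{dirac} for any Dirac dual pair, so there is no need to verify them along $M$ and propagate by a rank count.
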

	\begin{proof}
		The proof is divided into four steps, just like the proof of Theorem \ref{satsmooth}. 
		
		\vspace{0.2cm}
		\noindent
		\underline{Step 1:} Construction of the submanifold $P\subset M$.
		
		\vspace{0.1cm}
		\noindent
		Choose a spray $\mathcal{V}\in\mathfrak{X}(L)$ and denote by $\exp_{\mathcal{V}}:\Sigma\subset L\rightarrow M$ the corresponding spray exponential. Let $\textbf{s}:L\rightarrow M$ denote the bundle projection. Note that $\exp_{\mathcal{V}}(a)$ and $\textbf{s}(a)$ lie in the same presymplectic leaf of $(M,L)$, for all $a\in L$. Indeed, the path $t\mapsto\varphi_{t}(a)$ is an $A$-path for the Lie algebroid $A=\big(L,[\![\cdot,\cdot]\!],\mathrm{pr}_{T}\big)$, covering the path $t\mapsto \textbf{s}(\varphi_{t}(a))$ which connects $\textbf{s}(a)$ with $\exp_{\mathcal{V}}(a)$. In particular, we have that $\exp_{\mathcal{V}}(\Sigma|_{X})\subset Sat(X)$.
		
		Since $X\subset(M,L)$ is coregular, we have that $\mathrm{pr}_{T}^{-1}(TX)$ is a subbundle of $L|_{X}$, being the kernel of the constant rank bundle map $\overline{\mathrm{pr}_{T}}:L|_{X}\rightarrow TM|_{X}/TX$. Choose a complement $L|_{X}=\mathrm{pr}_{T}^{-1}(TX)\oplus C$ and consider the restriction $\exp_{\mathcal{V}}:C\cap\Sigma\rightarrow M$. It fixes points of $X$, and its differential along $X$ reads \cite[Lemma 7]{dirac}:
		\[
		\mathrm{d}\exp_{\mathcal{V}}:T_{x}X\oplus C_{x}\rightarrow T_{x}M:(u,a)\mapsto u+\mathrm{pr}_{T}(a).
		\]
		This map is injective and therefore, shrinking $\Sigma$ if necessary, the map $\exp_{\mathcal{V}}:C\cap\Sigma\rightarrow M$ is an embedding by Prop. \ref{embedding}. We set $P:=\exp_{\mathcal{V}}(C\cap\Sigma)$.
		
		\vspace{0.2cm}
		\noindent
		\underline{Step 2:} Shrinking $\Sigma$ if necessary, we have that $P=\exp_{\mathcal{V}}(\Sigma|_{X})$.
		
		\vspace{0.1cm}
		\noindent
		It is enough to show that the restriction of $\exp_{\mathcal{V}}$ to $\Sigma|_{X}$ has constant rank, equal to the rank of $\exp_{\mathcal{V}}|_{C\cap\Sigma}$. To see this, we apply Lemma \ref{rank} to the self-dual pair \eqref{dualpair}, and we obtain that
		\[
		\ker\big(\mathrm{d}(\exp_{\mathcal{V}}|_{\Sigma|_{X}})\big)=\ker(\mathrm{d}\exp_{\mathcal{V}})\cap \mathrm{d}\textbf{s}^{-1}(TX)
		\]
		has constant rank, equal to the rank of $\mathrm{pr}_{T}^{-1}(TX)\subset L|_{X}$. This implies that the rank of $\exp_{\mathcal{V}}|_{\Sigma|_{X}}$ is constant, equal to 
		\begin{align*}
			rk\big(\exp_{\mathcal{V}}|_{\Sigma|_{X}}\big)&=\dim X+rk(L) -rk\big(\mathrm{pr}_{T}^{-1}(TX)\big)\\
			&=\dim X+ rk(C)\\
			&=rk\big(\exp_{\mathcal{V}}|_{C\cap\Sigma}\big).
		\end{align*}
		
		\vspace{0.1cm}
		\noindent
		\underline{Step 3:} The submanifold $P\subset(M,L)$ is invariant.
		
		\vspace{0.1cm}
		\noindent
		
		We have to check that the characteristic distribution $\mathrm{pr}_{T}(L)$ of $L$  is tangent to $P$, i.e. that $\mathrm{pr}_{T}\big(L_{\exp_{\mathcal{V}}(a)}\big)\subset(\mathrm{d}\exp_{\mathcal{V}})_{a}(T_{a}\Sigma|_{X})$ for all $a\in\Sigma|_{X}$. 
		We will first show that 
		\[
		\mathrm{pr}_{T}\big(L_{\exp_{\mathcal{V}}(a)}\big)=(\mathrm{d}\exp_{\mathcal{V}})_{a}(W^{\perp_{\omega}}),
		\] 
		where $W$ denotes $\ker \mathrm{d}\exp_{\mathcal{V}}$ as before. To see this, first pick $u+\xi\in L_{\exp_{\mathcal{V}}(a)}$. Then $u-\xi\in -L$, and since the map $\exp_{\mathcal{V}}:(\Sigma,\text{Gr}(\omega))\rightarrow(M,-L)$ is forward Dirac, there exists $v\in T_{a}\Sigma$ such that $v+\iota_{v}\omega$ is $\exp_{\mathcal{V}}$-related with $u-\xi$, i.e.
		\[
		\begin{cases}
			\iota_{v}\omega=(\mathrm{d}\exp_{\mathcal{V}})_{a}^{*}(-\xi),\\
			u=(\mathrm{d}\exp_{\mathcal{V}})_{a}(v).
		\end{cases}
		\]
		This implies that $v\in W^{\perp_{\omega}}$, so $\mathrm{pr}_{T}(u+\xi)=u=(\mathrm{d}\exp_{\mathcal{V}})_{a}(v)$ is contained in $(\mathrm{d}\exp_{\mathcal{V}})_{a}(W^{\perp_{\omega}})$. Conversely, assume $v\in T_{a}\Sigma$ lies in $W^{\perp_{\omega}}$. Then $\iota_{v}\omega=(\mathrm{d}\exp_{\mathcal{V}})_{a}^{*}(\xi)$ for some $\xi\in T_{\exp_{\mathcal{V}}(a)}^{*}M$. This implies that $(\mathrm{d}\exp_{\mathcal{V}})_{a}(v)+\xi$ is $\exp_{\mathcal{V}}$-related with $v+\iota_{v}\omega\in\text{Gr}(\omega)$, and since the map $\exp_{\mathcal{V}}:(\Sigma,\text{Gr}(\omega))\rightarrow(M,-L)$ is forward Dirac, we get that $(\mathrm{d}\exp_{\mathcal{V}})_{a}(v)+\xi\in -L$, i.e. $(\mathrm{d}\exp_{\mathcal{V}})_{a}(v)-\xi\in L$. It follows that $(\mathrm{d}\exp_{\mathcal{V}})_{a}(v)\in \mathrm{pr}_{T}\big(L_{\exp_{\mathcal{V}}(a)}\big)$.
		
		Consequently, we obtain that
		\begin{align}\label{family}
			\mathrm{pr}_{T}\big(L_{\exp_{\mathcal{V}}(a)}\big)&=(\mathrm{d}\exp_{\mathcal{V}})_{a}\big(W^{\perp_{\omega}}\big)\nonumber\\
			&=(\mathrm{d}\exp_{\mathcal{V}})_{a}\big(V+W\cap K\big)\nonumber\\
			&\subset(\mathrm{d}\exp_{\mathcal{V}})_{a}\big(\mathrm{d}\textbf{s}^{-1}(TX)\big)\nonumber\\
			&=(\mathrm{d}\exp_{\mathcal{V}})_{a}(T_{a}\Sigma|_{X}),
		\end{align}
		where the second equality uses \cite[Lemma 3]{dirac}, and the third equality holds because $W=\ker \mathrm{d}\exp_{\mathcal{V}}$ and $V=\ker \mathrm{d}\textbf{s}\subset \mathrm{d}\textbf{s}^{-1}(TX)$. This proves Step 3.
		
		\vspace{0.2cm}
		\noindent
		\underline{Step 4:} Construction of the neighborhood $U$ of $X$.
		
		\vspace{0.1cm}
		\noindent
		The proof is completely analogous to the proof of Step 4 in Theorem \ref{satsmooth}. We want to extend the map $\exp_{\mathcal{V}}:C\cap\Sigma\rightarrow M$ to a local diffeomorphism. To do so, we choose a complement
		\[
		TM|_{X}=TX\oplus\left(\mathrm{pr}_{T}(C)\oplus E\right)
		\]
		and a linear connection $\nabla$ on $TM$. We obtain a map
		\[
		\psi:O\subset(C\oplus E)\rightarrow M:(a,e)\mapsto\exp_{\nabla}\left(Tr_{\exp_{\mathcal{V}}(ta)}e\right),
		\]
		which is a diffeomorphism onto a neighborhood of $X$.
		Here $O$ is a suitable convex neighborhood of the zero section, and $Tr_{\exp_{\mathcal{V}}(ta)}$ denotes parallel transport along the curve $t\mapsto\exp_{\mathcal{V}}(ta)$ for $t\in[0,1]$. Note that $\psi(a,0)=\exp_{\mathcal{V}}(a)$, so shrinking $P$, we can assume that $P=\psi(O\cap (C\oplus\{0\}))$. Setting $U:=\psi(O)$ finishes the proof.
	\end{proof}
	
	In the following, we denote by $(P,L_P)$ the Dirac manifold constructed in Thm. \ref{submanifold}; we refer to it as the \textbf{local Dirac saturation} of $X$. Since $X$ is a Dirac transversal in $(P,L_P)$, the normal form theorem around Dirac transversals \cite{eulerlike}, \cite{dirac} gives a normal form for the local Dirac saturation around $X$. We will reprove this result, continuing the argument from Theorem \ref{submanifold}. We need the following Dirac version of Lemma \ref{diracpair}. 
	

	\begin{lemma}
		Let $i:X\hookrightarrow (M,L)$ be a coregular submanifold with local Dirac saturation $(P,L_P)$. Then the following is a weak Dirac dual pair, in the sense of \cite{dirac}:
		\begin{equation}\label{weakpair}
			\begin{tikzcd}
				(X,i^{!}L)&\big(\Sigma|_{X},\text{Gr}(\omega|_{X})\big)\arrow{r}{\exp_{\mathcal{V}}}\arrow[l,"\mathbf{s}",swap]&(P,-L_{P}).
			\end{tikzcd}
		\end{equation}
		This means that $\omega|_{X}$ is a closed two-form on $\Sigma|_{X}$, that $\mathbf{s}$ and $\exp_{\mathcal{V}}$ are surjective forward Dirac submersions, and that
		\begin{align}
			&\omega|_{X}(S_{1},S_{2})=0,\label{prop1}\\
			&rk(S_{1}\cap \widetilde{K}\cap S_{2})=\dim \Sigma|_{X}-\dim X-\dim P,\label{prop2}
		\end{align}
		where $S_{1}:=\ker \mathrm{d}\mathbf{s},S_{2}:=\ker\mathrm{d}\exp_{\mathcal{V}}$ and $\widetilde{K}:=\ker(\omega|_{X})$.
	\end{lemma}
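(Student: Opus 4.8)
The plan is to transcribe the proof of Lemma~\ref{diracpair} into the Dirac setting, using the Dirac self-dual pair \eqref{dualpair} in place of \eqref{pair}, Lemma~\ref{rank} in place of Lemma~\ref{realization}, and the vector bundle isomorphism $W\xrightarrow{\sim}L$ built in the proof of Lemma~\ref{rank} in place of the Poisson bivector $\pi_\chi$. First I would dispose of the formal parts. The map $\mathbf{s}\colon\Sigma|_X\to X$ is a surjective submersion, being the restriction of the bundle projection $L\to M$. For $\exp_{\mathcal{V}}\colon\Sigma|_X\to P$, surjectivity is the identity $P=\exp_{\mathcal{V}}(\Sigma|_X)$ established in Step~2 of Theorem~\ref{submanifold}, and the same step shows that $\exp_{\mathcal{V}}|_{\Sigma|_X}$ has constant rank $\dim X+rk(C)=\dim P$, so it is a submersion onto $P$. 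Property \eqref{prop1} is immediate: $S_1\subset\ker\mathbf{s}_*$ and $S_2\subset\ker(\exp_{\mathcal{V}})_*$ along $\Sigma|_X$, and $\omega$ vanishes on the pair of distributions $\ker\mathbf{s}_*,\ \ker(\exp_{\mathcal{V}})_*$ by the dual pair \eqref{dualpair}.

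To see that $\mathbf{s}$ is forward Dirac, I would use the commutative square
\[
\begin{tikzcd}[column sep=large, row sep=large]
\bigl(\Sigma|_X,\text{Gr}(\omega|_X)\bigr)\arrow[r,"\mathbf{s}"]\arrow[d,hook,"i'"] & (X,i^{*}L)\arrow[d,hook,"i"]\\
(\Sigma,\text{Gr}(\omega))\arrow[r,"\mathbf{s}"] & (M,L)
\end{tikzcd}
\]
in which $i'$ is backward Dirac (the pullback of a $2$-form graph along the inclusion $\Sigma|_X\hookrightarrow\Sigma$ is $\text{Gr}(\omega|_X)$), $i$ is backward Dirac by definition of $i^{*}L$, and the bottom $\mathbf{s}$ is a forward Dirac submersion by \eqref{dualpair}; then \cite[Lemma~3]{maps} gives that the top $\mathbf{s}$ is forward Dirac. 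Running the same argument with $\exp_{\mathcal{V}}$ as the horizontal maps, $(P,-L_P)$ and $(M,-L)$ on the right, and using that $-L_P$ is the pullback of $-L$ along the invariant submanifold $P\hookrightarrow M$, shows that $\exp_{\mathcal{V}}\colon\bigl(\Sigma|_X,\text{Gr}(\omega|_X)\bigr)\to(P,-L_P)$ is forward Dirac as well.

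For the substantive statement \eqref{prop2}, fix $a\in\Sigma|_X$ and $m=\mathbf{s}(a)\in X$. Since $\ker\mathbf{s}_*\subset\mathbf{s}_*^{-1}(TX)$, one has $S_1\cap S_2=\ker\mathbf{s}_*\cap\ker(\exp_{\mathcal{V}})_*$; and a vector $v\in T_a(\Sigma|_X)$ lies in $\widetilde{K}=\ker(\omega|_X)$ exactly when $\iota_v\omega$ annihilates $T_a(\Sigma|_X)=\mathbf{s}_*^{-1}(T_mX)$, i.e. $\iota_v\omega=\mathbf{s}^{*}\beta$ for a (unique, as $\mathbf{s}$ is a submersion) covector $\beta\in T_mX^{0}$. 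I would then invoke the isomorphism $\Psi:=\psi\circ R_\omega\colon W\xrightarrow{\sim}L|_X$ produced in the proof of Lemma~\ref{rank} applied to \eqref{dualpair}, where $\Psi(w)=\mathbf{s}_*(w)+\beta_w$ with $\mathbf{s}^{*}\beta_w=\iota_w\omega$ and $\overline{pr_{T}}\circ\Psi=\overline{\mathbf{s}_*}$, so that $\Psi(S_2)=\Psi\bigl(W\cap\mathbf{s}_*^{-1}(TX)\bigr)=pr_{T}^{-1}(TX)$. Because the $TM$-component of $\Psi(w)$ is $\mathbf{s}_*(w)$, the isomorphism $\Psi$ sends $S_1\cap S_2=\ker\mathbf{s}_*\cap W$ onto $\{(0,\beta)\in L|_X\}$, and restricting further to $\widetilde{K}$ selects exactly those $w$ with $\beta_w\in TX^{0}$; hence $\Psi$ restricts to an isomorphism
\[
S_1\cap\widetilde{K}\cap S_2\ \xrightarrow{\ \sim\ }\ \{(0,\beta)\in L|_X:\beta\in TX^{0}\}=L|_X\cap\ker(i^{*})=\bigl(pr_{T}(L)+TX\bigr)^{0}.
\]
Since $X$ is regular this has constant rank $\dim M-rk(pr_{T}(L)+TX)=\dim M-\dim X-rk(\overline{pr_{T}})$, and comparing with $\dim\Sigma|_X=\dim X+\dim M$ and $\dim P=\dim X+rk(\overline{pr_{T}})$ (from Step~1 of Theorem~\ref{submanifold}) yields $rk(S_1\cap\widetilde{K}\cap S_2)=\dim\Sigma|_X-\dim X-\dim P$, which is \eqref{prop2}. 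The main obstacle I anticipate is precisely this rank bookkeeping: one must keep track that $\widetilde{K}$ is the kernel of the \emph{restricted} form $\omega|_X$, so that the annihilator condition forces $\beta\in T_mX^{0}$ rather than $\beta\in T_m^{*}M$, and one must correctly identify the isomorphism $\Psi$ of Lemma~\ref{rank} (which packages the forward-Dirac pushforward along $\mathbf{s}$) as the right replacement for the bivector $\pi_\chi$ used in Lemma~\ref{diracpair}; everything else is a formal transcription of that argument.
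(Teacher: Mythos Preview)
Your argument is correct. For the ``formal'' parts (surjectivity, submersion, forward Dirac, condition~\eqref{prop1}) you follow exactly the paper's route, which itself defers to the proof of Lemma~\ref{diracpair}. For the rank condition~\eqref{prop2}, however, you take a genuinely different and more direct path. The paper rewrites $S_1\cap\widetilde{K}\cap S_2$ as $V\cap(\mathbf{s}_*^{-1}(TX))^{\perp_\omega}\cap W$ and then computes its presymplectic orthogonal and its intersection with $K=\ker\omega$, using the identities $V^{\perp_\omega}=W+V\cap K$ and $W^{\perp_\omega}=V+W\cap K$ from \cite[Lemma~3]{dirac}; this reduces the question to the rank of $W+\mathbf{s}_*^{-1}(TX)$, which is handled by Lemma~\ref{rank} and some dimension bookkeeping. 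You instead apply the isomorphism $\Psi=\psi\circ R_\omega\colon W\xrightarrow{\sim}L|_X$ from the proof of Lemma~\ref{rank} directly, observing that it carries $S_1\cap\widetilde{K}\cap S_2$ isomorphically onto $L\cap\ker(i^{*})=(pr_T(L)+TX)^{0}$, so the rank is read off immediately from regularity of $X$. Your approach is shorter and makes transparent that $rk(S_1\cap\widetilde{K}\cap S_2)=rk(L\cap\ker(i^{*}))$, the exact Dirac analog of the Poisson identity $rk(S_1\cap K\cap S_2)=rk(\ker\pi^{\sharp}\cap TX^{0})$ in Lemma~\ref{diracpair}; the paper's approach, by contrast, stays entirely within presymplectic linear algebra on $\Sigma$ and does not re-open the proof of Lemma~\ref{rank}.
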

	\begin{proof}
		The only non-trivial part is that equality \eqref{prop2} holds. The other claims are proved exactly like in Lemma \ref{diracpair}, so we don't address them here.
		
		To prove \eqref{prop2}, note that $S_{1}\cap \widetilde{K}\cap S_{2}=V\cap (\mathrm{d}\textbf{s}^{-1}(TX))^{\perp_{\omega}}\cap W$, where $V,W$ are the vertical distributions of the original dual pair \eqref{dualpair}. Note that for any subspace $U_a\subset(T_{a}\Sigma,\omega_{a})$, we have 
		\[
		\dim (U_a^{\perp_{\omega}})=\dim (T_a\Sigma)-\dim (U_a)+\dim(U_a\cap K_a),
		\]
		where $K:=\ker\omega$.
		It follows that a family $U\subset T\Sigma$ of linear subspaces has constant rank if both $U^{\perp_{\omega}}$ and $U\cap K$ have constant rank. On one hand, we have
		\[
		\big(V\cap (\mathrm{d}\textbf{s}^{-1}(TX))^{\perp_{\omega}}\cap W\big)\cap K=0,
		\]
		since $V\cap K\cap W=0$. On the other hand, we consider $\big(V\cap (\mathrm{d}\textbf{s}^{-1}(TX))^{\perp_{\omega}}\cap W\big)^{\perp_{\omega}}$. Using that $K\subset(\mathrm{d}\textbf{s}^{-1}(TX))^{\perp_{\omega}}$, one checks that
		\[
		\big(V\cap (\mathrm{d}\textbf{s}^{-1}(TX))^{\perp_{\omega}}\cap W\big)^{\perp_{\omega}}=(V\cap W)^{\perp_{\omega}}+\big((\mathrm{d}\textbf{s}^{-1}(TX))^{\perp_{\omega}}\big)^{\perp_{\omega}}.
		\]
		Moreover, using that $V$ and $W$ are the vertical distributions of the dual pair \eqref{dualpair}, one proves that $(V\cap W)^{\perp_{\omega}}=V^{\perp_{\omega}}+W^{\perp_{\omega}}$. Altogether, we obtain 
		\begin{align*}
			\big(V\cap (\mathrm{d}\textbf{s}^{-1}(TX))^{\perp_{\omega}}\cap W\big)^{\perp_{\omega}}&=V^{\perp_{\omega}}+\big((\mathrm{d}\textbf{s}^{-1}(TX))^{\perp_{\omega}}\big)^{\perp_{\omega}}+W^{\perp_{\omega}}\\
			&=V^{\perp_{\omega}}+\mathrm{d}\textbf{s}^{-1}(TX)+K+W^{\perp_{\omega}}\\
			&=V^{\perp_{\omega}}+\mathrm{d}\textbf{s}^{-1}(TX)+W^{\perp_{\omega}}\\
			&=W+V\cap K+\mathrm{d}\textbf{s}^{-1}(TX)+V+W\cap K\\
			&=W+\mathrm{d}\textbf{s}^{-1}(TX)+V\\
			&=W+\mathrm{d}\textbf{s}^{-1}(TX).
		\end{align*}
		In the fourth equality, we use \cite[Lemma 3]{dirac}. Using Lemma \ref{rank}, we have now proved that $S_{1}\cap \widetilde{K}\cap S_{2}=V\cap (\mathrm{d}\textbf{s}^{-1}(TX))^{\perp_{\omega}}\cap W$ has constant rank. The rank is given by
		\begin{align*}
			rk\big(V\cap (\mathrm{d}\textbf{s}^{-1}(TX))^{\perp_{\omega}}\cap W\big)&=rk(T\Sigma)-rk(W+\mathrm{d}\textbf{s}^{-1}(TX))\\
			&=rk(T\Sigma)-rk(W)-rk(\mathrm{d}\textbf{s}^{-1}(TX))+rk(W\cap \mathrm{d}\textbf{s}^{-1}(TX))\\
			&=\dim(L) - rk(W)-\dim(X)-rk(V)+rk(\mathrm{pr}_{T}^{-1}(TX))\\
			&=\dim (M)-\dim(\exp_{\mathcal{V}}(\Sigma|_{X}))+rk(L)-rk(V)\\
			&=\dim(\Sigma)-rk(V)-\dim(\exp_{\mathcal{V}}(\Sigma|_{X}))\\
			&=\dim(\Sigma|_{X})-\dim(X)-\dim(\exp_{\mathcal{V}}(\Sigma|_{X})).
		\end{align*}
		This is exactly the rank condition \eqref{prop2}, so the proof is finished.
	\end{proof}
	
	\begin{cor}\label{diractransversal}
		Let $i:X\hookrightarrow(M,L)$ be a coregular submanifold. We choose a complement $L|_{X}=\mathrm{pr}_{T}^{-1}(TX)\oplus C$ and denote by $j:C\hookrightarrow L|_{X}$ the inclusion. The local Dirac saturation $(P,L_P)$ of $X$ is diffeomorphic with
		\[
		\big(C\cap\Sigma,(\textbf{s}^{!}(i^{!}L))^{-j^{*}\omega|_{X}}\big).
		\]
		In particular, $(P,L_P)$ is determined by the pullback Dirac structure $i^{!}L$, up to diffeomorphisms and exact gauge transformations.
	\end{cor}
	\begin{proof}
		Applying \cite[Prop.6]{dirac} to the diagram \eqref{weakpair}, we have the following equality of Dirac structures on $\Sigma|_{X}$:
		\begin{equation*}
			(\textbf{s}^{!}(i^{!}L))^{-\omega|_{X}}=(\exp_{\mathcal{V}})^{!}L_{P}.
		\end{equation*}
		Taking the pullback under the map $j$, which is transverse to this Dirac structure, we obtain
		\[
		(\textbf{s}^{!}(i^{!}L))^{-j^{*}\omega|_{X}}=(\exp_{\mathcal{V}}\circ j)^{!}L_{P},
		\]
		which is an equality of Dirac structures on $C\cap\Sigma$. We showed in Theorem \ref{submanifold} that $\exp_{\mathcal{V}}\circ j$ is a diffeomorphism from $C\cap\Sigma$ onto $P$, which proves the first statement. Moreover, since $-j^{*}\omega|_{X}$ is closed and its pullback to $X\subset C\cap\Sigma$ vanishes, it is exact on a neighborhood of $X$, by the relative Poincar\'e lemma. This implies the second statement of the corollary.
	\end{proof}
	
	\begin{remark}
		As mentioned before, the submanifold $X$ is a Dirac transversal in $(P,L_{P})$. Corollary \ref{diractransversal} agrees with the normal form around Dirac transversals proved in \cite{dirac}, upon identifying the normal bundle $TP|_{X}/TX$ with $C$.
	\end{remark}
	
	\section{Appendix}
	
	We prove a result in differential topology that may be of independent interest. It should be standard, but we could not find a reference in the literature. The statement is well-known under the stronger assumption that the derivative of the map is an isomorphism along the zero section \cite[Lemma 6.1.3]{mukherjee}. Our strategy is to reduce the proof to this case.
	
	\begin{prop}\label{embedding}
		Let $E\rightarrow N$ be a vector bundle, and let $\varphi:E\rightarrow M$ be a smooth map satisfying
		\begin{equation}\label{assumptions}
			\begin{cases}
				\varphi|_{N}\ \text{is an embedding}\\
				(\mathrm{d}\varphi)_{p}\ \text{is injective}\ \forall  p\in N
			\end{cases}.
		\end{equation}
		Then there is a neighborhood $U\subset E$ of $N$ such that $\varphi|_{U}$ is an embedding.
	\end{prop}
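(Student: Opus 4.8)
The plan is to reduce the statement to the classical case quoted from \cite[Lemma 6.1.3]{mukherjee}, where the derivative is an isomorphism along the zero section. The obstruction to applying that result directly is that $(d\varphi)_p$ for $p\in N$ is merely injective, not surjective: its image is a subbundle $T\varphi(TE|_N)\subset TM|_{\varphi(N)}$ of corank equal to $\operatorname{rk}M-\operatorname{rk}E-\dim N$. To fix this, I would first choose a subbundle $C\subset TM|_{\varphi(N)}$ complementary to $(d\varphi)(TE|_N)$ along $\varphi(N)$, using the embedding $\varphi|_N$ to transport it, i.e. regard $C\to N$ as a vector bundle. Then I would build an enlarged map on $E\oplus C$ that absorbs the missing directions.

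\medskip
\noindent\textbf{Key steps.} First, fix a linear connection $\nabla$ on $TM$ and define
\[
\Psi\maps E\oplus C\to M,\qquad \Psi(e,c)=\exp^{\nabla}_{\varphi_N(\mathrm{p}(e))}\!\big(\text{parallel transport of }c\text{ along }t\mapsto\varphi(te)\big),
\]
where $\mathrm{p}\maps E\to N$ is the projection; more simply, one may take $\Psi(e,c)=\exp^{\nabla}_{\varphi(e)}(\tilde c)$ for $\tilde c$ an appropriate lift of $c$. The point is only that $\Psi$ restricts to $\varphi$ on $E\oplus\{0\}$ and that $(d\Psi)_p$ for $p\in N$ sends $T_pE\oplus C_p$ isomorphically onto $(d\varphi)_p(T_pE)\oplus C_p=T_{\varphi(p)}M$, because the $C$-directions are filled in by the exponential map and they were chosen complementary to the image of $(d\varphi)_p$. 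Second, apply \cite[Lemma 6.1.3]{mukherjee} to $\Psi$: since $(d\Psi)_p$ is an isomorphism for all $p\in N$ and $\Psi|_N=\varphi|_N$ is an embedding, there is a neighborhood $O\subset E\oplus C$ of $N$ on which $\Psi$ is an embedding (in fact an open embedding). Third, set $U:=O\cap(E\oplus\{0\})$, an open neighborhood of $N$ in $E$; then $\varphi|_U=\Psi|_{O\cap(E\oplus\{0\})}$ is the restriction of an embedding to a submanifold, hence an embedding. This is exactly the construction alluded to in Step~4 of the proof of Theorem~\ref{satsmooth} and in Theorem~\ref{submanifold}.

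\medskip
\noindent\textbf{Main obstacle.} The routine verifications (smoothness of $\Psi$, the exact shape of $(d\Psi)_p$) are harmless. The genuine content is the classical statement being invoked: that a smooth map which is a local diffeomorphism near a closed submanifold $N$ and restricts to an embedding on $N$ is an embedding on a neighborhood of $N$. The subtle point there is not injectivity of the derivative but \emph{global injectivity} on a neighborhood — one must rule out distinct points of $E\oplus C$ near $N$, lying over \emph{different} points of $N$, being identified by $\Psi$; this uses that $\varphi|_N$ is an embedding (so a homeomorphism onto its image) together with a tube-type argument, and it is precisely why the hypothesis ``$\varphi|_N$ is an embedding'' (rather than merely an injective immersion) is needed. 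Since this is the part we are quoting from \cite{mukherjee}, the remaining work in the appendix is just the reduction described above. A remark (Remark~\ref{nbhd}) should record that $U$ may be taken of the form $\{v\in E: \|v\|<\epsilon\}$ for a fiber metric, which is what is actually used in the main text.
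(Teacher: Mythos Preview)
Your proposal is correct and follows essentially the same route as the paper: choose a complement $C$ to the image of $d\varphi$ along $\varphi(N)$, extend $\varphi$ to a map $\Psi$ on $E\oplus C$ via a connection and parallel transport so that $d\Psi$ is an isomorphism along $N$, invoke \cite[Lemma 6.1.3]{mukherjee}, and restrict back to $E$. One small correction: your description of Remark~\ref{nbhd} is off---the actual remark does not concern the shape of $U$, but rather the situation where $\varphi$ is only defined on a neighborhood of $N$ in $E$ (which is how the result is applied in Theorem~\ref{satsmooth}); the fix there is to precompose with a fiberwise contraction $\mu:E\to E$ landing in the domain of $\varphi$.
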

	\begin{proof}
		We get a vector subbundle $\mathrm{d}\varphi|_{N}(E)\subset TM|_{\varphi(N)}$ which has trivial intersection with $T\varphi(N)$. Choose a complement $C$ to $\mathrm{d}\varphi|_{N}(E)\oplus T\varphi(N)$ in $TM|_{\varphi(N)}$, i.e.
		\[
		TM|_{\varphi(N)}=T\varphi(N)\oplus \mathrm{d}\varphi|_{N}(E)\oplus C.
		\]
		Fix a linear connection $\nabla$ on $TM$, and define a map
		\[
		\psi:E\oplus (\varphi|_{N})^{*}C\rightarrow M:(e,c)\rightarrow \exp_{\nabla}\big(Tr_{\varphi(te)}c\big),
		\]
		where $Tr_{\varphi(te)}$ denotes parallel transport along the curve $t\mapsto \varphi(te)$ for $t\in[0,1]$. We slightly abuse notation, since the map $\psi$ is only defined on a small enough neighborhood of the zero section $N$. Clearly, $\psi$ satisfies the following properties:
		\begin{itemize}
			\item $\psi$ restricts to $\varphi|_{N}$ along the zero section $N$.
			\item For $p\in N$ and a vertical tangent vector $(e,c)\in T_{p}\big(E\oplus (\varphi|_{N})^{*}C\big)$, we have
			\begin{align*}
				(\mathrm{d}\psi)_{p}(e,c)&=\left.\frac{\mathrm{d}}{\mathrm{d}s}\right|_{s=0}\psi(s e,0)+\left.\frac{\mathrm{d}}{\mathrm{d}s}\right|_{s=0}\psi(0,s c)\\
				&=\left.\frac{\mathrm{d}}{\mathrm{d}s}\right|_{s=0}\exp_{\nabla}\big(0_{\varphi(se)}\big)+\left.\frac{\mathrm{d}}{\mathrm{d}s}\right|_{s=0}\exp_{\nabla}(sc)\\
				&=\left.\frac{\mathrm{d}}{\mathrm{d}s}\right|_{s=0}\varphi(se)+c\\
				&=(\mathrm{d}\varphi)_{p}(e)+c,
			\end{align*}
			which shows that $\mathrm{d}\psi$ is an isomorphism at points of the zero section.
			\item We have that $\psi(e,0)=\varphi(e)$, i.e. the following diagram commutes:
			\begin{equation}\label{diag}
				\begin{tikzcd}[column sep=large, row sep=large]
					& E\oplus(\varphi|_{N})^{*}C\arrow[d,"\psi"] \\
					E\arrow[r,"\varphi"]\arrow[ru,hookrightarrow] &  M 
				\end{tikzcd}
			\end{equation}
		\end{itemize}
		Using the first and second bullet point above, the inverse function theorem for submanifolds (e.g. \cite[Lemma 6.1.3]{mukherjee}) shows that $\psi$ is an embedding on a neighborhood of $N$. Since also the inclusion $E\hookrightarrow E\oplus(\varphi|_{N})^{*}C$ on the left in \eqref{diag} is an embedding, it follows that $\varphi$ is an embedding on a neighborhood of $N$ in $E$.
	\end{proof}

	\begin{remark}\label{nbhd}
		If a map $\varphi:U\subset E\rightarrow M$ satisfying the assumptions \eqref{assumptions} of Proposition \ref{embedding} is only defined on a neighborhood $U\subset E$ of $N$, then the conclusion of the proposition still holds. This can be obtained, for instance, by constructing a smooth map $\mu:E\rightarrow E$ such that $\mu(E)\subset U$ and $\mu=\text{Id}$ near $N$ (see \cite[Chapter 4, \S5]{hirsch}). Then Proposition \ref{embedding} implies that the composition $\varphi\circ\mu:E\rightarrow M$ is an embedding on a neighborhood of $N$, hence the same holds for $\varphi$.
	\end{remark}

\end{document}